%Revised version_1 8 March, 2024
% Version 3-2, 8 2023 . Section 6 is added. 
% Version 3, 19 December, 2022 SS_for_TopCat.tex 
% The title is changed. 15 December, 2022. 
% Section 5 and Appendix B are added. 11 December 2022. 
% Remark 4.2 is revised with "as a vector space". I thank Luc and Jean-Claude for the revision. 14th November. 
%We consider delta-functors in the proof of Theorem 2.2 and delete the assertion of the commutativity of SS. 3rd November. 
% Add Appendix. 31th October, 2022. 
%The first version, 13th May, 2019
\documentclass{amsart}
\usepackage{latexsym}
\usepackage{amsmath}
\usepackage{amssymb}
\usepackage[all,cmtip,2cell]{xy}
\UseTwocells

\usepackage{bracket}
\usepackage{autobreak}
\usepackage{enumitem}
\usepackage[hypertexnames=false]{hyperref} % the option "hypertexnames=false" is added for autonum
\usepackage[capitalize]{cleveref}
\crefname{equation}{}{}
\crefname{enumi}{}{}

% Comment out the following 5 lines when publish
%\usepackage[usenames]{xcolor}
%\usepackage[color]{showkeys}
%\definecolor{refkey}{rgb}{0.6, 0.7, 0.4}
%\definecolor{labelkey}{rgb}{0, 0.7, 0.5}
%\newcommand{\enabletodonotes}{}

% do NOT comment out this if there are remaining \todo's
\usepackage[\ifdefined\enabletodonotes\else disable\fi]{todonotes}

\usepackage{autonum} % must be after hyperref and showkeys

% \usepackage[usenames]{xcolor} より後にしないと "option clash" する
%\usepackage{tikz}
%\usetikzlibrary{positioning}
%\usetikzlibrary{arrows.meta}

\numberwithin{equation}{section}
\newtheorem{thm}{Theorem}[section]
\newtheorem{prop}[thm]{Proposition}
\newtheorem{cor}[thm]{Corollary}
\newtheorem{lem}[thm]{Lemma}
\newtheorem{claim}[thm]{Claim}
\theoremstyle{definition}
\newtheorem{defn}[thm]{Definition}

\theoremstyle{remark}
\newtheorem{rem}[thm]{Remark}

%%%%%%%%%%%%%%%%%%%%%%%%%%%%%%%%%%
%\begin{thm}
% \label{thm:1.1}
%
%\end{thm}
%
% \ref{thm:1.1}
%%%%%%%%%%%%%%%%%%%%%%%%%%%%%%%%%%
%
%\newcommand{\qed}{\hbox{\rule{6pt}{6pt}}}
%

%

%

%

%
\newcommand{\K}{{\mathbb K}}
\newcommand{\Q}{{\mathbb Q}}
\newcommand{\C}{{\mathcal C}}
\newcommand{\D}{{\mathcal D}}
\newcommand{\e}{\varepsilon}
\newcommand{\R}{{\mathbb R}}
\newcommand{\G}{\mathcal G}
\newcommand{\bul}{\text{\tiny{$\bullet$}}}
%

%

%Figure 
\newcommand{\mapright}[1]{%
 \smash{\mathop{%
  \hbox to 1cm{\rightarrowfill}}\limits_{#1}}}
\newcommand{\maprightd}[2]{%
 \smash{\mathop{%
  \hbox to 1.2cm{\rightarrowfill}}\limits^{#1}\limits_{#2}}}
\newcommand{\mapleft}[1]{%
 \smash{\mathop{%
  \hbox to 1cm{\leftarrowfill}}\limits_{#1}}}
\newcommand{\mapleftu}[1]{%
 \smash{\mathop{%
  \hbox to 0.8cm{\leftarrowfill}}\limits^{#1}}}
\newcommand{\maprightu}[1]{%
 \smash{\mathop{%
  \hbox to 1cm{\rightarrowfill}}\limits^{#1}}}
\newcommand{\maprightud}[2]{%
 \smash{\mathop{%
  \hbox to 1cm{\rightarrowfill}}\limits^{#1}_{#2}}}
\newcommand{\mapleftud}[2]{%
 \smash{\mathop{%
  \hbox to 1cm{\leftarrowfill}}\limits^{#1}_{#2}}}

%%%%%%%%%%%%%%%%%%%%%%%%%%%%%%%%%%%%%%%%%%%%%%%%
% eqnlabel.sty
% by Atsushi SATO
% last modified 20 Mar 2006

%\newcounter{eqn}[section]

%\def\theeqn{\textnormal{(\thesection.\arabic{eqn})}}

%\def\eqnlabel#1{%
%  \refstepcounter{eqn}%
%  \label{#1}%
%  \leqno{\theeqn}}
%%%%%%%%%%%%%%%%%%%%%%%%%%%%%%%%%%%%%%%%%%%%%%%%

\begin{document}
\title[On multiplicative spectral sequences for nerves]{On multiplicative spectral sequences for nerves and the free loop spaces}

\footnote[0]{{\it 2010 Mathematics Subject Classification}: 55T05, 55T20, 55N25, 55P35, 58A40, 58A10
\\ 
{\it Key words and phrases.} Spectral sequence, Borel construction, free loop space, diffeology.  

%This research was partially supported by a Grant-in-Aid for Scientific
%Research (B) 25287008 
%from Japan Society for the Promotion of Science.

Department of Mathematical Sciences, 
Faculty of Science,  
Shinshu University,   
Matsumoto, Nagano 390-8621, Japan   
e-mail:{\tt kuri@math.shinshu-u.ac.jp}
}

\author{Katsuhiko KURIBAYASHI}
\date{}
   
\maketitle

\begin{abstract} We construct a multiplicative spectral sequence converging to the cohomology algebra of the diagonal complex of 
a bisimplicial set with coefficients in a field. 
The construction provides a spectral sequence converging to the cohomology algebra of 
the classifying space of a category internal to the category of topological spaces. By applying the machinery to a Borel construction,  
we explicitly determine the mod $p$ cohomology algebra of the free loop space of the real projective space 
for each odd prime $p$. %This is highlighted as an important  computational example of such a spectral sequence. 
This example is emphasized as an important computational case.
Moreover, we represent generators in the singular de Rham cohomology algebra of the diffeological free loop space of a non-simply connected manifold $M$ with differential forms on the universal cover of $M$ via Chen's iterated integral map. 
\end{abstract}

\section{Introduction}\label{sect:Int}

A {\it stack} is a generalization of a sheaf. More precisely, it is a weak 2-functor from a site to the category of groupoids which satisfies the gluing conditions on objects and morphisms.  
In particular, differentiable 
and topological stacks are obtained by Lie and topological groupoids, respectively, via the stakifications of prestacks associated with such groupoids; see 
\cite[Section 2]{B-X}, \cite{H} and \cite[Chapter 1]{BGNX} for more details. 
In \cite {R-V}, loop stacks are investigated by using diffeological groupoids; 
see \cite{IZ} and Appendix \ref{sect:App} for diffeological spaces. %Consequently, the groupoid has a Fr\'echet--Lie groupoid structure. 
It is worthwhile mentioning that the study of {\it orbifolds} is developed with those Lie groupoid presentations; see \cite{A-L-R, M-P}. 

In \cite{B}, Behrend introduced the de Rham cohomology and the singular homology for a differentiable stack, which are those of the classifying space of a Lie groupoid presenting the stack. In particular, the cohomology is invariant under Morita equivalence. 
Thus, such results on stacks and groupoids motivate us to consider how to compute the cohomology of such a classifying space; see Remark \ref{rem:3}.

This manuscript aims to introduce multiplicative spectral sequences computing the cohomology algebras of the classifying spaces of topological categories with coefficients in a field $\K$; see 
Theorems \ref{thm:main} and \ref{thm:App}.  Although there is no application for a stack in this study, 
the product structure demonstrates its power 
in giving more additional structure to the spectral sequence and 
in the computations of the cohomology algebras of non-simply connected spaces. To be more precise, let $LX$ denote the free loop space of a space $X$, $BG$ the classifying space of a finite group $G$ and 
$EG\times_GM$ the Borel construction of a $G$-space $M$. 
Then, Remark \ref{rem:SS_L} enables one to obtain a multiplicative spectral sequence endowed with an $H^*(LBG; \K)$-module structure converging to the cohomology algebra of $L(EG\times_GM)$. %, where $G$ is a finite group, $M$ is a $G$-space and $EG\times_GM$ denotes the Borel construction. 
%Here  $LBG$ denotes the free loop space of the classifying space $BG$. 
We also refer the reader to Corollaries
\ref{cor:L_Cotensor} and \ref{cor:L_Cotensor_HH}, Theorem \ref{thm:L_Cotensor} and Proposition \ref{prop:LBG} 
each of which provides a method for computing the Borel cohomology algebra of a $G$-space. 

As a computational example, we explicitly determine the cohomology algebra of the free loop space of the real projective space with coefficients in ${\mathbb Z}/p$ and $\Q$, where $p$ is an odd prime; see Theorem \ref{thm:computation_I}. 
To our knowledge, this result is novel. 
%While the rational cohomology of the even dimensional projective space has no nontrivial information, 
Despite the lack of nontrivial information in the rational cohomology of the even-dimensional 
projective space, the cohomology algebra of the free loop space can be beneficial. 

A {\it diffeological space} is a generalization of a manifold. Therefore, it is crucial to consider smooth (homotopy) invariants of the generalized objects. 
In particular, the de Rham complex and its singular variant of a diffeological space are introduced in \cite{So} and  \cite{K}, respectively. 
In this manuscript, we moreover attempt 
to represent generators in the singular de Rham cohomology of the free loop space of a non-simply connected manifold $M$ 
by using differential forms on the universal cover of $M$ within the framework of {\it diffeology}; see Theorem \ref{thm:DR} and subsequent comments.  As a consequence, 
because of Theorem \ref{thm:computation_I}, we can describe generators of the singular de Rham cohomology algebra of the diffeological 
free loop space of $n$-dimensional real projective space with the volume form on the sphere $S^n$ via Chen's iterated integral map; 
see Theorem \ref{thm:deRhamP}. 
While the original iterated integrals due to Chen work well for the cohomology of the free loop space of a simply connected manifold, the diffeological argument above shows that we can also deal with non-simply connected manifolds in Chen's theory for free loop spaces.  That is an advantage of considering manifolds in diffeology.

An outline of this manuscript is as follows. Section \ref{sect:assertions} introduces a spectral sequence with a multiplicative structure 
for a bisimplicial set. The spectral sequence gives rise to those for topological categories, Borel constructions, and diffeological categories. In Section \ref{sect:proofs}, we establish Theorems \ref{thm:main} and \ref{thm:App}. 
Section \ref{sect:example(s)} describes the computational example mentioned above. 
In Section \ref{GlobalQuotients}, by generalizing the computations in Section \ref{sect:example(s)}, 
we present results of the cohomology algebras of Borel constructions. 
Moreover, we consider spectral sequences for transformation groupoids including an inertia groupoid.  
In Section \ref{sect:deRham}, we investigate the singular de Rham cohomology of the diffeological free loop space of a non-simply connected manifold
by applying results concerning Borel constructions  
in Section \ref{GlobalQuotients}.

Appendix \ref{sect:App2} gives a weak homotopy equivalence between a Borel construction and the free loop space of a quotient space, which is used in the computation in Section \ref{sect:example(s)}. 
In Appendix \ref{sect:App}, we briefly recall the category of diffeological spaces together with adjoint functors between the category of topological spaces. 
Appendix \ref{sect:App3} proves that the diffeological free loop space of smooth maps from $S^1$ is weak homotopy equivalent to the pullback 
of the evaluation map from a path space along the diagonal map in the category of diffeological spaces. 
This result is critical for proving Theorem \ref{thm:DR}. 
%The result plays an important role in proving Theorem \ref{thm:DR}. 

\section{A multiplicative spectral sequence for a bisimplicial set and its variants}\label{sect:assertions}
We introduce multiplicative spectral sequences associated with a bisimplicial set by explicitly describing the product structure. 

For a simplicial set $K$, we denote by $C^*(K; \K)$ and  $H^*(K; \K)$ 
the cochain algebra and the cohomology algebra of $K$ with coefficients in a field $\K$, respectively.
Let $\text{Sing}_\bul(X)$ be the singular simplicial set of a space $X$.  
We may write  $H^*_{\text sing}(X; \K)$ or simply $H^*(X; \K)$ for the singular cohomology algebra 
$H^*(\text{Sing}_\bul(X); \K)$.  

Let $S=S_{\bul \bul}$ be a bisimplicial set.  
%We denote by $C^*(X, \K)$ the cochain complex of a simplicial set $X$ with coefficients in $\K$. 
Then, the vertical face maps give rise to a differential $(d^v)^*$  by the alternating sum 
on the graded algebra 
$C^{p,*}:=C^*(S_{p \bul} ;\K)$ with the usual cup product $\cup$ for any $p\geq 0$. Moreover, we have a double complex 
$\{ \{C^{p, q}\}_{p, q\geq 0}, (d^h)^*, (d^v)^*\}$, where
the differential $(d^h)^*$ is induced by the horizontal face maps of the bisimplicial set $S_{\bul \bul}$. 
A product $\cup_T$ on the total complex $\text{Tot} \, C^{*,*}$ is defined by 
\begin{eqnarray}\label{eq:cupPD}
%\[
\omega \cup_T \eta = (-1)^{qp'} (d_{p+1}^h\cdots d_{p+p'}^h)^*\omega \cup (d_0^h \cdots d_{p-1}^h)^*\eta
%\]
\end{eqnarray}
for $\omega \in C^{p, q}$ and $\eta \in C^{p', q'}$.
Observe that the differential on $\text{Tot} \,C^{*,*}$ is given by $\delta (\omega) = (d^h)^*(\omega) + (-1)^p(d^v)^*(\omega)$ for $\omega \in C^{p, q}$. 
%the homology $H^*(C)$ is endowed with 
%the differential $(d^h)^*$. 
Thus, we obtain a spectral sequence $\{E_r^{*, *}, d_r\}$ associated with the total complex.

\begin{thm}\label{thm:main} The first quadrant spectral sequence $\{E_r^{*, *}, d_r\}$  with the multiplicative structure defined by (\ref{eq:cupPD}) 
converges to $H^*(\text{\em diag} \,S_{\bul \bul}; \K)$ as an algebra with
\[
%$
E_2^{*,*}\cong H^*(H^*(S_{\bul \bul}, (d^v)^*), (d^h)^*)
%$
\]
as a bigraded algebra, where $\text{\em diag}  \,S_{\bul \bul}$ denotes the diagonal simplicial set of  $S_{\bul \bul}$; see, for example, \cite[Chapter IV, 1]{G-J}. 
Therefore, for a simplicial space $X_\bul = \{X_n, \partial_i, s_j \}$, one has a first quadrant spectral sequence converging to the singular cohomology algebra 
$H^*_{\text sing}(||X_\bul ||; \K)$ with 
%\[
$
E_2^{*,*}\cong H^*\big(H^*_{\text sing}(X_\bul, \K), \sum_i (-1)^i \partial_i^*\big)
$
%\]
as a bigraded algebra, where $||  \  ||$ denotes the fat geometric realization in the sense of Segal \cite{S}. 
\end{thm}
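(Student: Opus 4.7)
The plan is to build the spectral sequence from the standard horizontal filtration $F^pT=\bigoplus_{r\ge p}C^{r,*}$ on the total complex $T=\text{Tot}\,C^{*,*}$, and to promote it to a multiplicative spectral sequence using the product $\cup_T$ of (\ref{eq:cupPD}). The construction as an additive spectral sequence is standard; what has to be checked is (a) that $\cup_T$ makes $(T,\delta)$ a differential graded algebra, (b) that the filtration $F^\bullet T$ is multiplicative, and (c) that the induced map from the abutment to $H^*(\text{diag}\,S;\K)$ is an isomorphism of algebras.

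For (a), associativity of $\cup_T$ follows from associativity of the vertical cup product $\cup$ together with the compatibility of the horizontal front and back face maps with $\cup$. The Leibniz rule for $\delta=(d^h)^*+(-1)^p(d^v)^*$ is verified by sign bookkeeping: the factor $(-1)^{qp'}$ in (\ref{eq:cupPD}) together with the bisimplicial identities $d^h_id^h_j=d^h_{j-1}d^h_i$ for $i<j$ and $d^h_id^v_j=d^v_jd^h_i$ conspire to produce the correct signs. Part (b) is immediate: since $\cup_T$ sends $C^{p,q}\otimes C^{p',q'}$ into $C^{p+p',q+q'}$, we have $F^pT\cup_T F^{p'}T\subseteq F^{p+p'}T$. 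The product induced on $E_0^{p,q}=C^{p,q}$ is the vertical cup product after pullback by the front and back face maps, and passing first to vertical and then to horizontal cohomology yields the bigraded isomorphism $E_2^{*,*}\cong H^*(H^*(S_{\bul\bul},(d^v)^*),(d^h)^*)$ of algebras.

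Step (c) is the main obstacle. It requires a quasi-isomorphism of differential graded algebras between $(T,\delta,\cup_T)$ and $(C^*(\text{diag}\,S;\K),d,\cup)$, where $\cup$ is the Alexander--Whitney cup product on the diagonal. A chain equivalence is provided by the classical Eilenberg--Zilber theorem for bisimplicial abelian groups, dualized to cochains. To enhance it to a DGA map, I would construct an explicit cochain map $\alpha\colon T\to C^*(\text{diag}\,S;\K)$ sending $\omega\in C^{p,q}$ with $p+q=n$ to the cochain
\begin{equation*}
\alpha(\omega)(x)=\omega\bigl((d^h_{p+1}\cdots d^h_n)(d^v_0\cdots d^v_{p-1})\,x\bigr),\qquad x\in S_{n,n},
\end{equation*}
and check that $\alpha$ carries $\cup_T$ to the Alexander--Whitney cup product on the diagonal. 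The fact that $\cup_T$ is built from the horizontal front face $d^h_{p+1}\cdots d^h_{p+p'}$ and back face $d^h_0\cdots d^h_{p-1}$ is precisely what makes this compatibility hold on the nose; the verification is a careful but routine manipulation with bisimplicial identities. An alternative, more conceptual, route is to apply the method of acyclic models to the two bifunctors $S\mapsto C^*(\text{diag}\,S)$ and $S\mapsto T(S)$ on bisimplicial sets, which yields a DGA quasi-isomorphism up to chain homotopy without writing down a formula.

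For the second statement, apply the first part to the bisimplicial set $\text{Sing}_\bul(X_\bul)=\{\text{Sing}_q(X_p)\}_{p,q}$. By Segal \cite{S}, the fat realization satisfies $|\text{diag}\,\text{Sing}_\bul(X_\bul)|\simeq ||X_\bul||$, so $H^*(\text{diag}\,\text{Sing}_\bul(X_\bul);\K)\cong H^*_{\text{sing}}(||X_\bul||;\K)$ as algebras. The horizontal differential then becomes the alternating sum $\sum_i(-1)^i\partial_i^*$ of face operators of $X_\bul$, while vertical cohomology at fixed horizontal degree $p$ recovers $H^*_{\text{sing}}(X_p;\K)$, giving the desired $E_2$-term as an algebra.
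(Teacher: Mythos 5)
Your steps (a) and (b) and the identification of the $E_2$-term match the paper, and your ``alternative, more conceptual route'' is in substance what the paper actually does: it shows that $H_*(\mathrm{dual}(\mathrm{Tot}\,C(-))\otimes \mathrm{dual}(\mathrm{Tot}\,C(-)))$ and $\pi_*(\mathrm{dual}\,\mathrm{diag}(-))$ are universal $\delta$-functors on bisimplicial $\K$-vector spaces (the functors $\mathrm{dual}$, $\mathrm{Tot}\,C$, $\mathrm{diag}$ are exact and preserve projectives), checks that the square relating $\cup_T$ and $\cup$ through $H(\mathrm{AW}^*)$ commutes in degree $0$, and propagates commutativity by universality. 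That is an acyclic-models argument in the abelian setting, so your fallback is sound and essentially the paper's proof.

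The genuine gap is in your primary route: the claim that $\alpha=\mathrm{AW}^*$ carries $\cup_T$ to the Alexander--Whitney cup product \emph{on the nose} is false. Track which vertices of $x\in S_{n,n}$ survive into the first factor. Computing $\alpha(\omega\cup_T\eta)(x)$ for $\omega\in C^{p,q}$, $\eta\in C^{p',q'}$, one first forms $y=(d^h_{p+1}\cdots d^h_n)(d^v_0)^{p+p'}x$ and then applies the vertical front face down to degree $q$; the vertical vertices retained are $\{p+p',\dots,p+p'+q\}$. Computing $(\alpha(\omega)\cup\alpha(\eta))(x)$, one first takes the diagonal front face on $\{0,\dots,p+q\}$ and then $(d^v_0)^p$; the vertical vertices retained are $\{p,\dots,p+q\}$. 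These index sets differ whenever $p'>0$, so for a general bisimplicial set the two composites evaluate $\omega$ on different elements of $S_{p,q}$, and $\alpha$ is multiplicative only up to chain homotopy, not strictly. So the ``careful but routine manipulation'' cannot succeed as stated, and you must fall back on the homotopy-level argument (acyclic models, or the paper's universal $\delta$-functors). For the last paragraph, the identification $H^*(\mathrm{diag}\,\mathrm{Sing}_{\bul}(X_\bul);\K)\cong H^*(\|X_\bul\|;\K)$ also deserves a word more than a citation: the paper passes through $|(|\mathrm{Sing}_{\bul'}(X_\bul)|_{\bul'})|\simeq|\mathrm{diag}\,\mathrm{Sing}_{\bul'}(X_\bul)|$ (Eilenberg--Zilber), uses goodness to compare fat and ordinary realizations, and uses that the counit $|\mathrm{Sing}(X_n)|\to X_n$ is a levelwise weak equivalence inducing a homology isomorphism of fat realizations; this is standard but should be recorded.
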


%We shall construct this spectral sequence replacing the de Rham complex in the construction \cite{B} of the cohomology of stacks with the singular cochains; see %\cite[(5) and (6)]{B}. 

A prototype of the spectral sequence in Theorem \ref{thm:main} is one stated in \cite[II 6.8. Corollary]{M}; see also \cite[Proposition (5.1)]{S} for a generalized cohomology. The novelty here is that we explicitly provide an algebraic structure in the spectral sequence.

Let $\C=\xymatrix@C15pt@R5pt{[C_1 \ar@<-0.5ex>[r]_-t \ar@<0.5ex>[r]^-s& C_0]}$ be a category internal to $\mathsf{Top}$ the category of topological spaces; that is, structure maps containing the source map $s$ and the target map $t$ are continuous.  
We may drop the maps $s$ and $t$ in the notation of an internal category.  
The nerve functor gives rise to a cosimplicial cohain complex %abelian group
\[
n \mapsto C^*(\text{Nerve}_n\C, \K)
\]
and then this induces a cosimplicial abelian group $n \mapsto H^q(\text{Nerve}_n\C, \K)$ for any $q$. 
In what follows, for a cosimplicial abelian group $A^\bullet$, we denote by $H_\Delta(A^\bullet)$ the cohomology of $A^\bullet$. Let $\text{B}\C$ be the classifying space, namely, 
$\text{B}\C =  || \text{Nerve}_\bullet\C ||$, which is the fat geometric realization of the simplicial space 
$\text{Nerve}_\bullet\C$; see \cite{S}. The multiplicative spectral sequence in Theorem \ref{thm:main} is adaptable to many situations. The following results illustrate it with crucial examples.

\begin{thm} \label{thm:App}\text{\em (cf. \cite{B}, \cite[II 6.8.Corollary, IV 4.1.Theorem]{M}, \cite[Corollary 3.10]{Gu-May})} 

\text{\em i)} Let $\C=\xymatrix@C15pt@R5pt{[C_1 \ar@<-0.5ex>[r] \ar@<0.5ex>[r]& C_0]}$ be a category internal to $\mathsf{Top}$. Then there exists a spectral sequence $\{E_r^{*, *}, d_r\}$ converging to 
$H^*(\text{\em B}\C; \K)$ {\it as an algebra} with 
\[
E_2^{p,q}\cong H^p_\Delta(H^q(\text{\em Nerve}_\bul\C; \K)). 
\]
%Moreover, the differential bigraded algebra $(E_2^{*,*}, d_2)$ is commutative. 

\text{\em ii)} Let $G$ be a topological group and $X$ a $G$-space. Then there exists a spectral sequence converging to the Borel cohomology 
$H^*_G(X; \K):=H^*(EG\times_G X; \K)$ {\it as an algebra} with 
%\[
$
E_2^{p,q}\cong H^p(H^q(\text{\em Nerve}_\bul{\mathcal G}; \K)). 
$
%\]
Here ${\mathcal G} := \xymatrix@C15pt@R5pt{[G\times X \ar@<-0.5ex>[r] \ar@<0.5ex>[r]& X]}$ denotes the transformation groupoid associated to the $G$-space $X$ whose source map and target map are the projection on the first factor and the action of $G$ on $X$, respectively. 
In particular, one has an isomorphism 
\[
E_2^{p,q}\cong \text{\em Cotor}^{p,q}_{H^*(G)}(\K, H^*(X))
\] 
provided $H^*(G)$ and $H^*(X)$ are locally finite; see 
Remark \ref{rem:SS_L} for a more structure of the spectral sequence in the case where $G$ is a finite group.

\text{\em iii)} Let $\C=\xymatrix@C15pt@R5pt{[C_1 \ar@<-0.5ex>[r] \ar@<0.5ex>[r]& C_0]}$ be a category internal to the category $\mathsf{Diff}$ of diffeological spaces; that is, the sets $C_0$ and $C_1$ of objects and morphisms are diffeological spaces, respectively, and 
structure maps in $\C$ are smooth; see Section \ref{sect:App}.
Let $A_{DR}^*(S^D_\bul(\text{\em Nerve}_\bul (\C)))$ denote the cosimplicial de Rham complex 
of the nerve of $\C$ introduced in \cite[\S 2]{K}; see Remark \ref{rem:3-2} and Appendix \ref{sect:App}. % and Section \ref{sect:App} for diffeological spaces. 
Then, there exists a spectral sequence converging to $H^*(\text{\em diag} \,S^D_\bul(\text{\em Nerve}_\bul (\C)); \R)$ {\it as an algebra} with 
\[
%$
E_2^{p,q}\cong H^p_\Delta(H_{DR}^q(\text{\em Nerve}_\bul (\C))). 
%$
\]
Here $H_{DR}^*(\text{\em Nerve}_\bul (\C))$ denotes the cohomology of the complex $A_{DR}^*(S^D_\bul(\text{\em Nerve}_\bul (\C)))$.
\end{thm}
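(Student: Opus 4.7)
The plan is to deduce all three assertions from Theorem \ref{thm:main} by choosing an appropriate bisimplicial set in each case and identifying the $E_2$-page.

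For part \text{\em i)}, I would apply the simplicial-space form of Theorem \ref{thm:main} to $X_\bul = \text{Nerve}_\bul \C$; concretely, take the bisimplicial set $S_{p,q}=\text{Sing}_q(\text{Nerve}_p \C)$. Convergence to $H^*(\text{B}\C;\K)$ as an algebra is immediate because $\text{B}\C = ||\text{Nerve}_\bul \C||$ by definition, and the $E_2$-term is the horizontal cohomology of the cosimplicial algebra $[p]\mapsto H^q(\text{Nerve}_p \C;\K)$ whose coboundary is the alternating sum of pullbacks along the face maps of the nerve.

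For part \text{\em ii)}, the Borel construction $EG\times_G X$ is weakly equivalent to $\text{B}\G$ for the transformation groupoid $\G$, so part \text{\em i)} supplies a spectral sequence of algebras converging to $H^*_G(X;\K)$ with the asserted $E_2$. Since $\text{Nerve}_n\G = G^n\times X$, the local finiteness hypothesis together with the K\"unneth formula gives
\[
  H^*(\text{Nerve}_n\G;\K) \cong H^*(G)^{\otimes n}\otimes H^*(X).
\]
Under this identification, the cosimplicial coboundary (induced by multiplication in $G$, the $G$-action on $X$, and the outer projection) is precisely that of the normalized cobar complex for the Hopf algebra $H^*(G)$ acting on the comodule $H^*(X)$; its cohomology is $\text{Cotor}^{*,*}_{H^*(G)}(\K, H^*(X))$.

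For part \text{\em iii)}, I would apply Theorem \ref{thm:main} directly to the bisimplicial set $S_{p,q}=S^D_q(\text{Nerve}_p \C)$. The result is a spectral sequence of algebras converging to $H^*(\text{diag}\, S^D_\bul(\text{Nerve}_\bul \C);\R)$ with
\[
  E_2^{p,q} \cong H^p\bigl([p]\mapsto H^q(S^D_\bul(\text{Nerve}_p \C);\R)\bigr).
\]
To replace singular $\R$-cohomology by the de Rham complex, I would invoke the diffeological de Rham theorem from \cite{K}: the natural map $A_{DR}^*(S^D_\bul Y)\to C^*(S^D_\bul Y;\R)$ is a quasi-isomorphism for every diffeological space $Y$. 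Applying this to $Y=\text{Nerve}_p \C$ and using naturality in $p$ identifies the cosimplicial object on which $H^p$ acts with $[p]\mapsto H_{DR}^q(\text{Nerve}_p \C)$, yielding the claimed $E_2$.

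The main obstacle is the algebra compatibility in part \text{\em ii)}: verifying that the product $\cup_T$ of (\ref{eq:cupPD}) descends on $E_2$ to the shuffle product on $\text{Cotor}$. This amounts to a cochain-level check that the horizontal shifts $(d_{p+1}^h\cdots d_{p+p'}^h)^*$ and $(d_0^h\cdots d_{p-1}^h)^*$ appearing in $\cup_T$ correspond, under the K\"unneth decomposition of $H^*(\text{Nerve}_{p+p'}\G;\K)$, to the counit-insertion maps defining the shuffle product on the cobar construction of $(H^*(G), H^*(X))$.
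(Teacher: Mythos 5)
Your treatment of parts i) and iii) is essentially the paper's own: i) is a direct application of Theorem \ref{thm:main} to the bisimplicial set $\mathrm{Sing}_q(\mathrm{Nerve}_p\,\C)$, and iii) applies the first half of Theorem \ref{thm:main} to $S^D_\bullet(\mathrm{Nerve}_\bullet(\C))$ and then compares singular and de Rham cochains via \cite[Proposition 3.4]{K} (note only that the cited comparison is a \emph{zig-zag} of natural quasi-isomorphisms of cochain algebras, not a single natural map; this changes nothing in substance). Where you genuinely diverge is part ii). You identify the target by the classical bar-construction equivalence $\|\mathrm{Nerve}_\bullet\G\| \simeq_w EG\times_G X$ and then quote part i); the paper instead never leaves the cochain level: it forms the banal groupoid $\widetilde{\G}$ of the principal bundle $\pi\colon EG\times X\to EG\times_G X$, invokes descent (\cite[Lemma 32]{B}, using that $\pi$ is a topological submersion) to show the edge homomorphism $H^*(EG\times_G X)\to H(\mathrm{Tot}\,C^*(\mathrm{Nerve}_\bullet\widetilde{\G}))$ is an algebra isomorphism, and then uses contractibility of $EG$ and a column-wise spectral sequence comparison along $h\colon \widetilde{\G}\to\G$ to transfer this to $\G$. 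The trade-off: your route is shorter and more standard, but the comparison between the \emph{fat} realization $\|\mathrm{Nerve}_\bullet\G\|$ (which is what Theorem \ref{thm:main} converges to) and $EG\times_G X=|B_\bullet(\ast,G,X)|$ requires the simplicial space to be good (e.g.\ $G$ well-pointed) to pass from fat to thin realization; the paper's descent argument sidesteps this by working with total complexes throughout, at the cost of needing $\pi$ to be a topological submersion. The Cotor identification via K\"unneth and local finiteness is the same in both arguments, and your final concern about the shuffle product on $\mathrm{Cotor}$ is not actually required by the statement (the algebra claim is made for $E_2\cong H^p(H^q(\mathrm{Nerve}_\bullet\G;\K))$, and the Cotor identification is only asserted as an isomorphism); the paper does not carry out that check either.
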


One might expect that the target of the spectral sequence in Theorem \ref{thm:App} iii) is replaced with the cohomology of a more familiar object.   
We discuss the topic in Remarks \ref{rem:EZ}, \ref{rem:3-2} and \ref{rem:3}. 

\begin{rem}\label{rem:4} Let  $\C$ be a category internal to $\mathsf{Top}$. 
Then, by using the polynomial de Rham functor $A_{{PL}}$ (see, for example, \cite[II 10 (a), (b) and (c)]{FHT} and Appendix \ref{sect:S-deRham}) instead of the singular cochain functor in Theorem \ref{thm:App} i), we have a spectral sequence converging to the rational 
cohomology of $\text{B}\C$. 
In this case, an appropriate Sullivan model for each $\text{Nerve}_n\C$ may be useful when computing the $E_2$-term as an algebra; see, for example, 
\cite[Part II]{FHT} for Sullivan models. 
%In fact, we have a {\it noncommutative model} for the classifying space of a topological stack. 
Observe that the product $\wedge_T$ on  $A_{PL}(\text{Nerve}_\bul\C)$ is of the form 
\begin{eqnarray}\label{eq:PDdeRham}
%$
\omega \wedge_T \eta = (-1)^{qp'} \pi_1^*\omega \wedge \pi_2^*\eta
%$
\end{eqnarray}
for $\omega \in A_{{PL}}^q(\text{Nerve}_p\C)$ and 
$\eta \in A_{{PL}}^{q'}(\text{Nerve}_{p'}\C)$, where 
$\pi_1$ and $\pi_2$ are maps assigning $(f_1, \dots, f_p)$ and $(f_{p+1}, \dots, f_{p+p'})$ to 
$(f_1, \dots, f_p, f_{p+1}, \dots, f_{p+p'})$, respectively; see \cite[(6)]{B}. 
\end{rem}

%We conclude this section with comments and remarks on spectral sequences described in Theorem \ref{thm:App}. % and \ref{thm:main}. 
%\begin{rem}\label{rem:1}
The spectral sequences in Theorem \ref{thm:App} i) and ii) are variants of that in Theorem  \ref{thm:main}. 
Therefore, each of them converges to the cohomology of the total complex $\text{Tot} \,C^{*}(\text{Nerve}_\bul (\C); \K)$ as an algebra. 
The spectral sequence described in Theorem \ref{thm:App} iii) is also constructed by applying Theorem \ref{thm:main}. 
Then, it converges to the cohomology algebra of the total complex $\text{Tot} \, A_{DR}^*(S^D_\bul(\text{Nerve}_\bul (\C)))$ with the same product 
$\wedge_T$ as in (\ref{eq:PDdeRham}). These targets of the convergences are isomorphic to the cohomology algebras  described in Theorem  \ref{thm:App}. This follows from the proof of Theorem \ref{thm:App}. 
Thus, it may be possible to reconstruct the algebra 
structure of the target from that of the $E_\infty$-term in the same way as in \cite[Section 7]{K-Mimura-Nishimoto} with the formula of the product; that is,  we may solve extension problems in the spectral sequences in Theorem \ref{thm:App}. 

Moreover, the formulae (\ref{eq:cupPD}) and (\ref{eq:PDdeRham}) enable us to explicitly consider 
the multiplication on the $E_2$-term of the spectral sequence; see the proof of Proposition \ref{prop:L_0} in which the cohomology algebra of the free loop space of a Borel construction is investigated for low degrees. 
%\end{rem}

%\begin{rem}\label{rem:2} 
The spectral sequence in \cite[Corollary 3.10]{Gu-May} converging to the homology of the Borel construction $EG\times_GM$ for a $G$-space $M$ has a differential coalgebra structure, and the condition on local finiteness for the homology groups $H_*(G)$ and $H_*(M)$ is {\it not} required in constructing the spectral sequence. Indeed, the torsion product of chain complexes is used in the construction. 
In contrast, in proving Theorem \ref{thm:App} ii), we consider the nerve of a category internal to $\mathsf{Top}$ and apply the K\"unneth theorem. 
Then, the local finiteness for cohomology groups $H^*(G)$ and $H^*(X)$ is required in our theorem. We stress that the multiplicative structure in our spectral sequence is given explicitly by (\ref{eq:cupPD}) without an argument on dualizing the homology.   

%We again stress that the product in  (\ref{eq:cupPD}) may give us an explicit formula for the product in the $E_2$-term of 
%the spectral sequence in Theorem \ref{thm:App} ii); see also the formula (\ref{eq:PDdeRham}) below.
%\end{rem}

%We conclude this section with comments on the spectral sequence described in Theorem \ref{thm:App} iii). 

\section{Constructions of the spectral sequences}\label{sect:proofs}

The goal of this section is to construct the spectral sequences described in Theorems  \ref{thm:main} and  \ref{thm:App}. 

\begin{proof}[Proof of Theorem \ref{thm:main}] 
%\medskip
We observe that the decreasing filtration $\{F^p\text{Tot} \, C^{*,*}\}_{p\geq 0}$ defined by $(F^p\text{Tot}\, C^{*,*})^n = \bigoplus_{i+j =n, i\geq p}\, C^{i,j}$ provides the spectral sequence. 
Since the product $\cup_T$ in (\ref{eq:cupPD}) preserves the filtration, it induces a multiplicative structure
in the spectral sequence. Thus, for proving the first assertion, it suffices to show that the product in 
$H^*(\text{Tot}\, C^{*,*})$ given by $\cup_T$ is compatible with the cup product on $H^*(\text{diag} \,S_{\bul \bul}; \K)$ under an appropriate isomorphism between the cohomology groups. 
To this end, we use an argument with universal $\delta$-functors; see \cite[Chapter 2]{W}.  

The simplicial identities for the horizontal face maps of 
$S_{\bul \bul}$ enable us to deduce that the cup product $\cup_T$ on $\text{Tot} \,C^{*,*}$ is a cochain map. A direct computation gives the fact. 
We show that a diagram 
\begin{equation}\label{eq:diagram1}
\xymatrix@C15pt@R18pt{
H^*(\text{Tot} \,C^{*,*})\otimes H^*(\text{Tot} \,C^{*,*}) \ar[d]_{H(\text{AW}^*)\otimes H(\text{AW}^*)}^{\cong} 
\ar[r]^-{\cup_T} &  H^*(\text{Tot} \,C^{*,*})  \ar[d]^{H(\text{AW}^*)}_{\cong} \\
\text{dual}\,\pi_*(\text{diag} \, \K(S_{\bul \bul})) \otimes 
\text{dual}\,\pi_*(\text{diag} \, \K(S_{\bul \bul})) \ar[r]_(0.64){\cup} &
\text{dual}\,\pi_*(\text{diag} \, \K(S_{\bul \bul}))
}
%\eqnlabel{add-0}
\end{equation}
is commutative, where $\cup$ is the usual cup product and $\text{AW}$ denotes the Alexander-Whitney map; see, for example, \cite[8.5.4]{W}.  
The diagram 
\begin{equation}
\xymatrix@C15pt@R18pt{
H_*(\text{dual} \,( \text{Tot} \, CA))\otimes H_*(\text{dual} \, (\text{Tot} \, CA)) \ar[d]_{H(\text{AW}^*)\otimes H(\text{AW}^*)}
\ar[r] &H_*(\text{dual} \,( \text{Tot} \, CA) \otimes \text{dual} \, (\text{Tot} \, CA)) \ar[d]^{H(\text{AW}^*\otimes \text{AW}^*)}\\
\pi_*(\text{dual} \, \text{diag} A) \otimes 
\pi_*(\text{dual} \,\text{diag} A)  \ar[r] & \pi_*(\text{dual} \, \text{diag} A \otimes \text{dual} \,\text{diag} A))
}
\end{equation}
is commutative, 
where the horizontal maps are the canonical ones, $A$ is the bisimplicial vector space $\K(S_{\bul \bul})$ and 
$C$ denotes the double complex functor; see \cite[8.5]{W}.  Then, in order to prove the commutativity of the diagram (\ref{eq:diagram1}), we show that the diagram 
\begin{equation}\label{eq:diagram2}
\xymatrix@C15pt@R18pt{
H_*(\text{dual} \,( \text{Tot} \, CA) \otimes \text{dual} \, (\text{Tot} \, CA)) \ar[d]_{H(\text{AW}^* \otimes \text{AW}^*)}^{\cong} 
\ar[r]^-{\cup_T} &  H_*(\text{dual} \, (\text{Tot} \, CA))  \ar[d]^{H(\text{AW}^*)}_{\cong} \\
 \pi_*(\text{dual} \, \text{diag} A \otimes \text{dual} \,\text{diag} A)) 
 \ar[r]_-{\cup} &
\pi_*(\text{dual}\, \text{diag} \, \K(S_{\bul \bul}))
}
%\eqnlabel{add-0}
\end{equation}
is commutative. It is proved that $F(\ ):= H_*(\text{dual} \,( \text{Tot} \, C(\ )) \otimes \text{dual} \, (\text{Tot} \, C(\ )))$ and 
$\pi_*(\text{dual} \,\text{diag} (\ ))$ are universal $\delta$-functors from the category of bisimplicial $\K$-vector spaces 
to the opposite category of graded 
$\K$-vector spaces. 
In fact, functors $\text{dual}(\ )$ and  $\text{dual}(\ )\otimes \text{dual}(\ )$ are exact and preserve projectives. Moreover, 
functors $\text{Tot} \, C(\ )$ and $\text{diag} (\ )$ are also exact and preserve projectives; see \cite[8.5.2 and the proof of 8.5.1]{W}. 
Thus, it follows that 
\begin{eqnarray*}
%\[
\pi_*(\text{dual} \,\text{diag} (\ ))\!\!\!&=&\!\!\!(L_*\pi_0)\circ(\text{dual} \circ \text{diag}) (\ ) = L_*(\pi_0 \circ \text{dual} \circ \text{diag}) (\ ) \ \ \text{and} 
%\] 
\end{eqnarray*}
\begin{eqnarray*}
F(\ ) \!\!\!&=&\!\!\! (L_* H_0) (\text{dual} \,( \text{Tot} \, C( \ )) \otimes \text{dual} \, (\text{Tot} \, C(\ ) )) \\
  \!\!\!&=&\!\!\! (L_* H_0)\circ (\text{dual} \otimes \text{dual} \circ \,( \text{Tot} \, C) )( \ )\\ 
  \!\!\!&=&\!\!\!  L_*(H_0\circ \text{dual} \otimes \text{dual} \circ \text{Tot} \, C))( \ ).
\end{eqnarray*}
The result \cite[Theorem 2.4.7]{W} allows us to deduce 
that $F(\ )$ and $\pi_*(\text{dual} \,\text{diag} (\ ))$ are universal $\delta$-functors. 

Since the map $H_0(\text{AW}^*)$ is induced by the identity map on $A_{00}$ and the product $\cup_T$ is nothing but the cup product, it follows that 
the diagram (\ref{eq:diagram2}) is commutative on $H_0$. Therefore, the universality of the functor $F(\ )$ enables us to conclude that  the diagram (\ref{eq:diagram2}) is commutative. Thus, 
the double complex $\text{Tot} \,C^{*,*}$ induces the multiplicative spectral sequence in the assertion. 
%Observe that the dual map of the normalized map gives rise to an isomorphism 
%$\text{dual}\,\pi_*(\text{diag} \, \K(S_{\bul \bul})) \stackrel{\cong}{\to}  H^*(\text{diag} \, S_{\bul \bul}, \K)$ of algebras. 
%By definition, this induces a unique natural transformation to a functor ; see \cite[Definition 2.1.4]{W}. 
%Therefore, the Alexander-Whitney map $AW$ will give rise to a morphism of algebras. \\

In order to prove the latter half of the assertion, we deal with bisimplicial sets and their geometric realizations. 
For a simplicial space $X_\bul$, we see that $| (|\text{Sing}_{\bul'}(X_\bul)|_{\bul'}) | \simeq | \text{diag} \,\text{Sing}_{\bul'} (X_\bul)|$ by the Eilenberg-Zilber theorem; see, for example, \cite[Lemma, page 94]{Q} and \cite[7. Theorem]{G-M}. 

Let $Y$ denote the simplicial space $|\text{Sing}_{\bul'}(X_\bul)|_{\bul'}$ which is the geometric realization with respect to indices $\bul'$. 
Since $Y$ is good in the sense that each degeneracy map is a closed cofibration, it follows that there exists a natural homotopy equivalence $||Y|| \stackrel{\simeq}{\longrightarrow} |Y|$; see \cite[Proposition A.1. (iv)]{S}. 
Moreover,  the counit of the geometric realization functor gives a natural weak homotopy equivalence $Y_n \stackrel{\simeq_w}{\longrightarrow} X_n$ for each $n$. 
Therefore, the equivalence induces a homology isomorphism $||Y|| \to ||X||$; see \cite[Lemma 5.16]{D}. 
It turns out that 
$H^*(\text{diag} \,\text{Sing}(X_\bul), \K) \cong H^*(||X|| , \K)$ as an algebra by a natural map; see \cite[Lemmas 1.2 and 1.3]{Ca} for a homotopical proof of the fact.  
\end{proof} 

\begin{rem}\label{rem:EZ}%\todo{Write it.}
One may expect a version of Theorem \ref{thm:main} for a simplicial diffeological space.
The isomorphism in the proof of the latter half of the theorem appears to be well known. To obtain this fact, we take advantage of the Eilenberg--Zilber theorem, which is also applied in \cite{Ca}.  The key to proving the powerful theorem is the use of the homeomorphism $|K \times \Delta[n] | \cong |K| \times |\Delta[n]|$ for a simplicial set $K$ and the standard simplicial set 
$\Delta[n]$. 
However, the diffeological realization functor $| \  |_D$ in the sense of Kihara \cite[Remark 22.1]{Kihara} or Christensen and Wu \cite[Proposition 4.13]{C-W} does {\it not} preserve the product even if $K$ is the standard simplicial set in the example above. Thus, we cannot prove verbatim a diffeological version of Theorem \ref{thm:main}.  
Indeed, it is not easy to replace the target of the spectral sequence in Theorem \ref{thm:App} iii) with a more familiar one for a general category internal to 
$\mathsf{Diff}$. To explain this inconvenience, we have described the proof of the isomorphism. 
\end{rem}

\begin{proof}[Proof of Theorem \ref{thm:App}]
The assertion i) follows from the direct application of Theorem \ref{thm:main}. 
As for the assertion iii), we recall the result \cite[Proposition 3.4]{K} which yields that for any simplicial set $K$, there exists a sequence 
%\[
%\xymatrix@C25pt@R25pt{
%C^*(K) & C_{PL}^*(K) \ar[l]^{\nu}_{\cong} \ar[r]^-{\simeq}_-\varphi & (C_{PL} \otimes {A_{DR}})^*(K) & 
%{A_{DR}^*}(K), \ar[l]_-{\simeq}^-\psi 
%}
%\]
of quasi-isomorphisms of cochain algebras between $C^*(K; \K)$ and $A_{DR}^*(K)$. 
Then, by applying the first half of Theorem \ref{thm:main} to 
the bisimplicial set $S^D_\bullet(\text{Nerve}_\bul(\C))$, we obtain iii). 
%the singular simplicial set $S_\bullet^D(X)$, we have the result iii). 

To demonstrate the assertion ii), we recall the proof of the result in \cite[Equivariant homology]{B} which describes an equivalence between the Borel cohomology and the cohomology of a groupoid. 

Let ${\mathcal G}$ be the transformation groupoid $\xymatrix@C15pt@R5pt{[G\times X \ar@<-0.5ex>[r] \ar@<0.5ex>[r]& X].}$ The principal $G$-bundle $\pi : EG \times X \to X_G:=EG\times_G X$ defined by the quotient map gives rise to the banal groupoid 
$\widetilde{\mathcal G} := \xymatrix@C15pt@R5pt{[(EG \times X)\times_{X_{G}} (EG \times X) \ar@<-0.5ex>[r]_-{\pi_1} \ar@<0.5ex>[r]^-{\pi_1}& (EG \times X)]}\!,$ where $\pi_i$ is the projection in the $i$th factor. Then, the groupoid $\widetilde{\mathcal G}$ is isomorphic to a transformation groupoid of the form  
$\xymatrix@C15pt@R5pt{[G \times (EG \times X) \ar@<-0.5ex>[r] \ar@<0.5ex>[r]& (EG \times X)].}$ 
In fact, we have a bundle isomorphism 
\[
\xymatrix@C20pt@R10pt{
G\times (EG \times X) \ar[rr]^-\mu_-\cong \ar[rd]_{pr}& & (EG \times X)\times_{X_{G}} (EG \times X) \ar[ld]^-{\pi_2} \\
 & EG\times G &
 }
\]
which is defined by $\mu(g, (e, x)) =((eg^{-1}, gx), (e, x))$. Here $pr$ denotes the projection in the second factor.  

Since the quotient map $\pi$ is a topological submersion, 
it follows from \cite[Lemma 32]{B} that the edge homomorphism $H^*_G(X)=H^*(X_G) \stackrel{\cong}{\to} 
H(\text{Tot}\,C^*(\text{Nerve}_\bul  \widetilde{\mathcal G}))$ is an 
isomorphism of algebras.  The projection $EG\times X \to X$ in the second factor induces a morphism $h : \widetilde{\mathcal G} \to {\mathcal G}$ of 
groupoids. Since $EG$ is contractible, it follows from the spectral sequence argument that $h$ induces an isomorphism 
$h^* : H(\text{Tot}\,C^*(\text{Nerve}_\bul {\mathcal G})) \stackrel{\cong}{\to} H(\text{Tot}\,C^*(\text{Nerve}_\bul  \widetilde{\mathcal G}))$. As a consequence, we have the spectral sequence converging to $H^*_G(X)$ in ii).  The local finiteness of $H^*(G)$ and $H^*(X)$ allows us to conclude that the chain complex $\{ H^*(\text{Nerve}_n \G)\}_n$ with the horizontal differential is nothing but the cobar complex computing the cotorsion functor.  We have the result. 
%We refer the reader to \cite[Example 11.4.4]{R} for a homotopical proof of the assertion ii). 
\end{proof}

As an input datum, our spectral sequence admits the nerve of a category internal to $\mathsf{Diff}$. 
Thus, it is expected that the machinery widely contributes to the calculation of cohomology algebras for not only topological categories but also diffeological ones; see Remark \ref{rem:3-2} below. 

We recall the D-topology functor $D: \mathsf{Diff} \to \mathsf{Top}$ from the category $ \mathsf{Diff}$ of diffeological spaces to that of topological spaces that admits the right adjoint $C$; see Appendix \ref{sect:App}. 
Let ${\mathcal V}_D$ be the class consisting of diffeological spaces $M$ for each of which 
the identity map $id : M \to CDM$ is a weak equivalence in $\mathsf{Diff}$. %; see also Appendix \ref{sect:App} for the functor $C$. 
Especially, the result \cite[Theorem 11.2]{Kihara} implies that 
a $C^\infty$-manifold in the sense of \cite[Section 27]{K-M} 
and hence each component of the nerve of a Lie groupoid $\G$ is in ${\mathcal V}_D$. 
In particular,  paracompact manifolds modeled on Hilbert spaces and the space of smooth maps between finite-dimensional manifolds 
are also in  ${\mathcal V}_D$; see \cite[Chapter 11.4]{Kihara} for more details. 

\begin{rem}\label{rem:3-2}
Given a diffeological space $X$, let $S_n^D(X)$ be the set of smooth maps to $X$ from the standard $n$-simplex $\Delta^n_{st}$ endowed with the diffeology in the sense of Kihara \cite[1.2]{Kihara_19}. Then, the cosimplicial set structure on $\Delta^n_{st}$ gives a simplicial set $S_\bullet^D(X):=\{S_n^D(X)\}$.  

It follows from \cite[Proposition 3.2]{Kihara_19} 
that $D(\Delta^n_{st})$ is the standard simplex which is a subspace of $\R^{n+1}$. 
Thus, for a category $\C$ internal to $\mathsf{Diff}$, 
the smoothing theorem \cite[Theorem 1.7]{Kihara} implies that the natural map 
\[
%$
\eta : S^D_\bullet(\text{Nerve}_\bul(\C)) \to \text{Sing}_\bul(D(\text{Nerve}_{\bul}(\C)))
%$
\] 
induced by the functor $D$
is a weak homotopy equivalence of simplicial sets 
provided each $A:=\text{Nerve}_{n}(\C)$ is in the class ${\mathcal V}_D$; see Theorem \ref{thm:smoothing} 
for a particular version of the smoothing theorem. 
%of diffeological spaces; that is, 
%$A$ satisfies the condition that 
%satisfy the condition above. 
Therefore, by \cite[Chapter IV, Proposition 1.7]{G-J}, we see that 
$\text{diag} \, S^D_\bul(\text{Nerve}_\bul(\C)) \simeq_w \text{diag} \! \text{\ Sing}_\bul(D(\text{Nerve}_{\bul}(\C)))$. It turns out that 
the spectral sequence in Theorem \ref{thm:App} iii) converges to the cohomology algebra 
$H^*(|| D(\text{Nerve}_\bullet (\C)) ||; \R)$. 
%We refer the reader to Remark \ref{rem:EZ} for the target in a more general case.
%The $E_2$-term is described with the de Rham cohomology of the diffeological space 
%$\text{Nerve}_{n}(\C)$ for each $n$. Therefore, we may investigate the cohomology of the target with differential forms in the Souriau--de Rham complex of the nerve; see Appendix \ref{sect:S-deRham} for the complex. 
%Then we have the assertion. 

We apply the argument above to a transformation diffeological groupoid ${\mathcal G} := \xymatrix@C15pt@R5pt{[G\times N \ar@<-0.5ex>[r] \ar@<0.5ex>[r]& N]}$ for which $G$ is a Lie group and $N$ is in ${\mathcal V}_D$.
 It follows from \cite[Lemma 4.1]{C-S-W} that the natural map $D(X\times Y) \to D(X)\times D(Y)$ is a homeomorphism if 
$D(X)$ is locally compact Hausdorff. This implies that $D(G^{\times n} \times N) \cong G^{\times n}\times D(N)$. 
Moreover, the functor $C$ is the right adjoint to $D$ and hence $C$ preserves the products. We conclude that each component 
$\text{Nerve}_n(\G)= G^{\times n} \times N$ of the nerve of $\G$ is in  ${\mathcal V}_D$.  As a consequence, 
the spectral sequence in Theorem \ref{thm:App} iii) converges to 
$H^*(|| D(\text{Nerve}_\bullet (\G)) ||; \R)\cong H^*(|| \text{Nerve}_\bullet (D\G) ||; \R)\cong H^*(EG\times_G DN; \R)$
as algebras, where $D\G$ denotes a topological groupoid of the form 
$\xymatrix@C15pt@R5pt{[G\times DN \ar@<-0.5ex>[r] \ar@<0.5ex>[r]& DN]}$; see the proof of Theorem \ref{thm:App} 
for the second isomorphism. 
Thus, we may describe the generators of $H^*(EG\times_G D(N); \R)$ with differential forms on $N$, although we do not pursue such a topic in this article; see Section \ref{sect:deRham} for a related topic. 
%\todo{Comments on the $S^1$-equivariant cohomology.}
%
\end{rem}
%The assertion remains true for a more general simplicial topological space. 

%\noindent
%$\bullet$ Alexander-Whitny maps. \\

\begin{rem}\label{rem:3}
Let $\G$ be a Lie groupoid presenting a differentiable stack ${\mathfrak X}$. By definition, 
the de Rham cohomology $H_{DR}^*({\mathfrak X})$ of the stack is the cohomology of the total complex of the bigraded de Rham complex 
$A:=\wedge^*(\text{Nerve}_{\bul} (\G))$; see \cite[Definition 9]{B}. 
By Proposition \ref{prop:Afactor_map}, the {\it factor map} 
$\alpha : \wedge^*(\text{Nerve}_{\bul} (\G)) \to A_{DR}^*(S^D_\bul(\text{Nerve}_\bul (\G))$
is a natural quasi-isomorphism. 

Moreover,  %as mentioned in the proof of Theorem \ref{thm:main}, 
the result \cite[Proposition 3.4]{K} asserts that there exists a sequence 
of natural quasi-isomorphisms of cochain algebras between $C:=C^*(S^D_\bul(\text{Nerve}_\bul (\G); \K)$ and $A_{DR}^*(S^D_\bul(\text{Nerve}_\bul (\G))$. 
Thus, we have a sequence of quasi-isomorphisms between the total complexes $(\text{Tot}\, C,  \cup_T)$ and $(\text{Tot}\, A, \wedge_T)$ which preserve products; see  (\ref{eq:cupPD}) and (\ref{eq:PDdeRham}) for the formulae of $\cup_T$ and $\wedge_T$, respectively. Therefore, because $\text{Nerve}_{\bul} (\G)$ is a manifold, the proof of the latter half of Theorem \ref{thm:main} and the same argument as in Remark \ref{rem:3-2} enable us to conclude that $H_{DR}^*({\mathfrak X}) \cong  H^*(B{\mathcal G}, {\mathbb R})$ {\it as an algebra}. 
We observe that the product $\wedge_T$ coincides with that in the total complex mentioned in \cite[(6)]{B}. 
%We observe that each $\text{Nerve}_{\bul} (\G)$ is a manifold and that de Rham complex of a diffeological space $X$ is nothing but the usual one if $X$ is a manifold; 
%see Remark \ref{rem:tautological_map}. 
%It is worthwhile mentioning that the de Rham theorem in diffeology \cite[Theorem 2.4]{K} and the argument above allow us to deduce that  $H_{DR}^*({\mathfrak X}) \cong  H^*(B{\mathcal G}, {\mathbb R})$ as an algebra via an integration map; see \cite[Section 3.2]{K}.
\end{rem}

\section{Computational examples}\label{sect:example(s)}

The aim of this section is to compute the cohomology algebra of the free loop space of the real projective space.  
Let $G$ be a {\it discrete} group acting on a topological space $M$. For $g \in G$, we define 
%\[
$
{\mathcal P}_g(M) :=\{ \gamma : [0, 1] \to M  \mid \gamma(1) = g\gamma(0) \}
$
%\]
which is a subspace of the space of continuous paths $M^{[0,1]}$ from the interval $[0, 1]$ to $M$ with the compact-open topology. Moreover, we put 
\begin{eqnarray}\label{eq:P_G}
{\mathcal P}_G(M) := \coprod_{g\in G}({\mathcal P}_g(M) \times \{g\}).
\end{eqnarray}
Then, the space ${\mathcal P}_G(M)$ admits a $G$-action defined by $h\cdot (\gamma, g) = ({}_h\gamma, hgh^{-1})$, where ${}_h\gamma(t) = h\cdot \gamma(t)$. For a space $X$, let $LX$ denote the free loop space of $X$, namely, the space of continuous maps from the circle $S^1$ to $X$.  
Let $G \to M \stackrel{p}{\to} M/G$ be a principal $G$-bundle. Proposition \ref{prop:An_orbifold_stack} 
enables us to obtain 
a weak homotopy equivalence 
\begin{equation}\label{eq:w}
\overline{p}: EG \times_G{\mathcal P}_G(M) \stackrel{\simeq_w}{\longrightarrow} L(M/G) 
\end{equation} 
which is induced by the projection $p : M \to M/G$. 
Thus, we see that the spectral sequence in Theorem \ref{thm:App} ii) computes the cohomology algebra of a transformation groupoid of the form 
$\G =\xymatrix@C15pt@R5pt{[G\times {\mathcal P}_G(M)  \ar@<-0.5ex>[r] \ar@<0.5ex>[r]& {\mathcal P}_G(M) ]}$ and hence that of 
$L(M/G)$.

Suppose that $M$ is simply connected. 
For each $g \in G$, to compute $H^*({\mathcal P}_g(M); \K)$ with coefficients in a field $\K$, 
we may use the Eilenberg--Moore spectral sequence (henceforth EMSS for short) for the pullback diagram 
\begin{eqnarray}\label{eq:PB_diagram}
\xymatrix@C25pt@R12pt{
{\mathcal P}_g(M) \ar[r] \ar[d] & M^{[0, 1]} \ar[d]^{\e_0\times \e_1} \\
M \ar[r]_-{1\times g} & M\times M, 
}
\end{eqnarray}
where $\e_i$ is the 
evaluation map at $i$ for $i = 0, 1$ and $g$ denotes the map induced by the action on $M$ with the element $g$. 
We observe that the EMSS $\{E_r^{*,*}, d_r\}$ 
converges 
to $H^*({\mathcal P}_g(M); \K)$ as an {\it algebra} with 
\[
E_2^{*,*} \cong \text{Tor}_{H^*(M; \K)\otimes H^*(M; \K)}^{*,*}(H^*(M; \K)_g, H^*(M; \K))
\]
as a {\it bigraded algebra}. Here 
$H^*(M; \K)_g$ denotes the cohomology algebra $H^*(M; \K)$ endowed with the right $H^*(M; \K)\otimes H^*(M; \K)$-action defined by 
$a\cdot(\lambda\otimes\lambda') = a(\lambda g^*(\lambda'))$ for $a\in  H^*(M; \K)_g$ and $\lambda, \lambda' \in H^*(M; \K)$. 

For an element $h$, the $G$-action on ${\mathcal P}_G(M)$ induces the map $h_* :   {\mathcal P}_g(M) \to  {\mathcal P}_{hgh^{-1}}(M)$ which fits in the commutative diagram 
\begin{eqnarray}\label{eq:actions}
\xymatrix@C10pt@R5pt{
    {\mathcal P}_g(M)\ar[rd]_(0.45){h_*} \ar[rr] \ar[dd] & & M^{[0, 1]} \ar[rd]^{h_*} \ar[dd]_(0.3){\e_0\times \e_1} | {\hole} & \\
   &  {\mathcal P}_{hgh^{-1}}(M) \ar[rr]  \ar[dd] & &  M^{[0, 1]} \ar[dd]^-{\e_0\times \e_1}  \\
M \ar[rd]_(0.45){h} \ar[rr]^(0.3){1\times g} | {\hole}  & &M\times M  \ar[rd]^{h\times h} &\\
   & M \ar[rr]_-{1\times hgh^{-1}} & & M\times M.
}
\end{eqnarray}
Then, the naturality of the EMSS gives rise to a morphism of spectral sequences that is compatible with the map 
$(h_*)^* : H^*( {\mathcal P}_{hgh^{-1}}(M); \K) \to H^*( {\mathcal P}_g(M); \K)$. 

In the rest of this section, 
by utilizing the spectral sequence in Theorem \ref{thm:App} ii), we determine the cohomology algebra of the free loop space $L{\mathbb R}P^n$ of the real projective space with coefficients in ${\mathbb Z}/p$  for $p \neq 2$. Moreover, we investigate the mod $2$ cohomology algebra of $L{\mathbb R}P^n$ in a more general context; see Proposition \ref{prop:L_0}.  

A portion of the computations, in general,  follows from a description of the cotorsion functor with the cohomology of a group; see Lemma \ref{lem:Cotor_G}, Theorem \ref{thm:L_Cotensor} and Corollary \ref{cor:L_Cotensor} below. However, 
we here compute the functor with the cobar complex to see what is happening in the $E_1$-term of the spectral sequence that we apply in the computation. 
\begin{thm}\label{thm:computation_I}
Let $p$ be an odd prime or $0$ and $m$ a positive integer, then as algebras, 
\begin{eqnarray*}
H^*(L\R P^{2m+1}; {\mathbb Z}/p) &\cong& H^*(LS^{2m+1}; {\mathbb Z}/p)\oplus H^*(LS^{2m+1}; {\mathbb Z}/p)  \\
  &\cong& (\wedge (y)\otimes \Gamma[\overline{y}])^{\oplus 2}    \  \  \      \text{and} \\
 H^*(L\R P^{2m}; {\mathbb Z}/p) &\cong& (\wedge(x\otimes u) \otimes \Gamma[w])\oplus \mathbb{Z}/p, 
\end{eqnarray*}
where $\deg y = 2m+1$, $\deg \overline{y} = 2m$, $\deg (x\otimes u) = 4m-1$, $\deg w = 4m-2$ and ${\mathbb Z}/0 := {\mathbb Q}$. 
\end{thm}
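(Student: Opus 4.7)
The plan is to apply (\ref{eq:w}) with $G=\mathbb{Z}/2$ acting antipodally on $M=S^n$, so that $L\R P^n$ is weakly equivalent to the Borel construction $E\mathbb{Z}/2 \times_{\mathbb{Z}/2}\mathcal{P}_{\mathbb{Z}/2}(S^n)$, and then to feed the associated transformation groupoid $\mathcal{G} = [\mathbb{Z}/2\times\mathcal{P}_{\mathbb{Z}/2}(S^n) \rightrightarrows \mathcal{P}_{\mathbb{Z}/2}(S^n)]$ into the multiplicative spectral sequence of Theorem~\ref{thm:App}~ii). Because $|G|=2$ is a unit in $\mathbb{Z}/p$ for $p$ odd (or in $\mathbb{Q}$), the cobar complex computing $E_2^{p,q} \cong H^p(H^q(\text{Nerve}_\bullet\mathcal{G};\mathbb{Z}/p))$ is the standard group cohomology complex of $\mathbb{Z}/2$ with $\mathbb{Z}/p$-vector space coefficients, which vanishes for $p>0$. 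The spectral sequence therefore collapses to a single column at $E_2$ with no extension problems, and since convergence is as an algebra, one reads off an algebra isomorphism
\[
H^*(L\R P^n; \mathbb{Z}/p) \cong H^*(\mathcal{P}_{\mathbb{Z}/2}(S^n); \mathbb{Z}/p)^{\mathbb{Z}/2}.
\]

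By (\ref{eq:P_G}), $\mathcal{P}_{\mathbb{Z}/2}(S^n) = LS^n \sqcup \mathcal{P}_\tau(S^n)$ with $\tau$ the antipodal map, and the abelianness of $\mathbb{Z}/2$ together with (\ref{eq:actions}) ensures that the involution preserves each summand; consequently the computation splits as $H^*(LS^n;\mathbb{Z}/p)^{\tau} \oplus H^*(\mathcal{P}_\tau(S^n);\mathbb{Z}/p)^{\tau}$. Each summand can be computed from the Eilenberg--Moore spectral sequence attached to (\ref{eq:PB_diagram}) with $g\in\{1,\tau\}$, whose $E_2$-page is $\text{Tor}_{H^*(S^n)^{\otimes 2}}^{*,*}(H^*(S^n)_g,H^*(S^n))$ over the exterior ($n$ odd) or truncated polynomial ($n$ even) algebra $H^*(S^n;\mathbb{Z}/p)$; these are standard Koszul calculations. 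The naturality square (\ref{eq:actions}) lifts $\tau$ to a morphism of Eilenberg--Moore spectral sequences, so identifying the induced involution on cohomology reduces to knowing $\tau^*\colon H^*(S^n;\mathbb{Z}/p) \to H^*(S^n;\mathbb{Z}/p)$, which is the identity when $n=2m+1$ and acts by $-1$ on the fundamental class when $n=2m$.

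When $n = 2m+1$, the twisted and untwisted modules $H^*(S^{2m+1})_\tau$ and $H^*(S^{2m+1})_1$ literally coincide, so the two EMSSs yield the same answer $\wedge(y)\otimes\Gamma[\bar y]$, the involution acts trivially on each summand, and the first isomorphism follows. When $n=2m$, the sign $\tau^*(u)=-u$ changes the twisted Tor and induces a non-trivial $\mathbb{Z}/2$-action on both cohomologies; a direct computation shows that on one component the invariants reduce to $\mathbb{Z}/p$ concentrated in degree zero, while on the other they assemble into the algebra $\wedge(x\otimes u)\otimes\Gamma[w]$ with the displayed degrees $4m-1$ and $4m-2$. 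The main obstacle I foresee is precisely this sign bookkeeping: carrying out the Koszul resolution for the twisted module $H^*(S^{2m})_\tau$, identifying explicit multiplicative generators at the $E_2$-level, tracking the $\tau$-action through the extensions from $E_\infty$ to the associated graded of $H^*$, and verifying that the invariants assemble into the advertised algebra rather than merely matching it in graded dimension.
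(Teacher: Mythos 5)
Your plan follows essentially the same route as the paper: the weak equivalence (\ref{eq:w}), the spectral sequence of Theorem \ref{thm:App} ii) for the transformation groupoid, collapse to the zero column because $|G|=2$ is invertible in $\K$, the splitting over $g\in\{1,\tau\}$, and the Eilenberg--Moore/Koszul--Tate computation of each $H^*({\mathcal P}_g(S^n);\mathbb{Z}/p)$ together with the $\tau$-action tracked through the naturality square (\ref{eq:actions}). The only cosmetic difference is that you pass directly to $G$-invariants where the paper works through the cobar complex (a choice it explicitly makes for expository reasons, cf.\ Lemma \ref{lem:Cotor_G} and Theorem \ref{thm:L_Cotensor}), and the ``sign bookkeeping'' you flag as the remaining obstacle is precisely the content of the paper's Claim \ref{claim:P}.
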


%\begin{rem}
%We refer the reader to \cite[The proof of Lemma 5.4]{We} in which we can deduce that 
%$H^*(L(\R P^{n}); {\mathbb Z}/2)$ is isomorphic to the Hochschild homology of $H^*(\R P^{n})$ as a vector space 
%for each $n > 1$; see \cite[Theorem 1.2 and Corollary 2.2]{We} for the explicit form of the Hochschild cohomology ring of 
%$H^*(\R P^{n})$. 
%\end{rem} 

Before starting the proof of Theorem \ref{thm:computation_I}, we recall a result on a right $G$ action on a vector space and the right $G$-coaction associated with the action. Let $G$ be a finite group and $\eta : \K[G]\otimes V \to V$ a 
$G$-action on a finite-dimensional vector space $V$. 
The left $G$-action gives rise to the right $G$-action $\varphi : V^\vee \otimes\K[G] \to V^\vee$ defined by $\varphi(f\otimes g)(v) = f(\eta(g\otimes v))$ for 
$v\in V$. Moreover, we see that 
the adjoint $ad(\varphi) : V^\vee \to \K[G]^\vee \otimes V^\vee$ coincides with the dual coaction $\eta^\vee : V^\vee \to \K[G]^\vee \otimes V^\vee$. 
We use the fact in the computation below without mentioning that. 

In what follows, we may omit the coefficients in the cohomology groups that we deal with. 

\begin{proof}[Proof of Theorem \ref{thm:computation_I}] 
The antipodal action of $G:={\mathbb Z}/2$ on the sphere $S^n$ gives rise to the real projective space 
$\R P^n = S^n/G$. 
By applying Theorem \ref{thm:App} ii) to the groupoid 
$\xymatrix@C15pt@R5pt{[G\times {\mathcal P}_G(S^n)  \ar@<-0.5ex>[r] \ar@<0.5ex>[r]& {\mathcal P}_G(S^n) ]}$\!, we have a spectral sequence 
$\{E_r^{*,*}, d_r\}$ converging to $H^*(L\R P^n; {\mathbb Z}/p)$ with 
$E_2^{*,*}\cong \text{Cotor}^{*,*}_{H^*(G)}({\mathbb Z}/p, H^*({\mathcal P}_G(S^n)))$ as an algebra. Since $G$ is abelian, it follows that 
the $G$ action on ${\mathcal P}_G(S^n)$ is restricted to each ${\mathcal P}_g(S^n)$ for $g \in G$. 
Then, we see that $L({\mathbb R}P^n)\simeq_w EG\times_G{\mathcal P}_G(S^n) = \coprod_{g \in G}\big(EG \times_G {\mathcal P}_g(S^n)\big)$ and  
\[
\text{Cotor}^{*,*}_{H^*(G)}({\mathbb Z}/p, H^*({\mathcal P}_G(S^n)))= \oplus_{g\in G}
\text{Cotor}^{*,*}_{H^*(G)}({\mathbb Z}/p, H^*({\mathcal P}_g(S^n))). 
\]
We compute the cotorsion functor with the normalized cobar complex 
\[
\big( {\mathbb Z}/p\{\tau^*\}^{\otimes k}\otimes H^*({\mathcal P}_g(S^n)), \partial_k= \nabla_G\otimes 1 + (-1)^{k+1} 1\otimes \nabla_{\tau^*}\big)_{k\geq 0}, 
\] 
where $\tau \in G$ denotes the nontrivial element, $\nabla_{\tau^*} : H^*({\mathcal P}_g(S^n)) \to \widetilde{H}^0(G)\otimes H^*({\mathcal P}_g(S^n))= {\mathbb Z}/p\{\tau^*\}\otimes  H^*({\mathcal P}_g(S^n))$ is the coaction induced by the $G$-action on ${\mathcal P}_g(S^n)$ and 
the projection $G\times {\mathcal P}_g(S^n) \to {\mathcal P}_g(S^n)$ gives rise to the map $\nabla_G$. Observe that the complex is nothing but the $E_1$-term of the spectral sequence $\{E_r^{*,*}, d_r\}$. 

In what follows, we may write $\K$ for the underlying field $ {\mathbb Z}/p$.  
We consider the EMSS associated with the fibre square (\ref{eq:PB_diagram}) for $M =S^n$. 

For the case $n = 2m+1$, the action on $H^*(S^{2m+1}) \cong \wedge (y)$ induced by the nontrivial element $\tau$ is given by 
$\tau^*(y) = y$. Therefore, the computation in \cite[Theorem 2.1]{K-Y} allows us to conclude that the cohomology algebra $H^*({\mathcal P}_g(S^n))$ is isomorphic to $H^*(LS^{2m+1})$ for each $g \in G$. The naturality of the EMSS implies that 
%It follows that 
%$H^*({\mathcal P}_G(S^n))\cong H^*(LS^{2m+1})^{\oplus 2}$. 
the action by $g^*$ on  $H^*(LS^{2m+1})$ is trivial and then so is the coaction. This yields that $E_2^{0, *}\cong\text{Cotor}^{0,*}_{H^*(G)}({\mathbb Z}/p, H^*({\mathcal P}_G(S^n)))\cong 
H^*(LS^{2m+1})^{\oplus 2}$ and $E_2^{p, *} = 0$ for $p>0$. Then, it follows that $E_2^{*,*} \cong E_\infty^{*,*}$ and there is no extension problem. The explicit form of $H^*(LS^{2m+1})$ follows from \cite[Theorem 2.1]{K-Y}. This enables us to obtain the first assertion. 

We consider the case where $n = 2m$. In order to compute the torsion functor which gives the $E_2$-term of the EMSS, 
we recall a Koszul--Tate resolution of the form 
\[
{\mathcal F} = (\Lambda \otimes \Lambda \otimes \wedge(u)\otimes \Gamma[w], d) \stackrel{\varepsilon}{\to} \Lambda \to 0
\] 
of $\Lambda := H^*(S^{2m})=\K[x]/(x^2)$ as a left $\Lambda \otimes \Lambda$-module, where $\varepsilon$ is the multiplication on $\Lambda$, 
$d(\Lambda \otimes \Lambda) = 0$, $d(u) = x\otimes 1-1\otimes x$, $d(\gamma_r(w))= (x\otimes 1 + 1 \otimes x)u\otimes \gamma_{r-1}(w)$, $\text{bideg}\ u = (-1, \deg x)$ and $\text{bideg}\ \gamma_r(w) = r(-2, 2\deg x)$; see 
\cite[Proposition 3.5]{Smith1} and \cite[Proposition 1.1]{K91}. The complex 
$(\Lambda\otimes _{\Lambda \otimes \Lambda}{\mathcal F}, 1\otimes d)$ computes the Hochschild homology of $\Lambda$. 
Using the resolution, we determine $H^*({\mathcal P}_0(S^n))$ and $H^*({\mathcal P}_\tau(S^n))$ for the nontrivial element $\tau \in G$

%We determine  the cohomology algebra $H^*({\mathcal P}_\tau(S^n))$ for the nontrivial element $\tau\in G$. 
\begin{claim}\label{claim:P} \text{\em i)} $H^*({\mathcal P}_0(S^{2m}))\cong 
\{\K[x]/(x^2)\otimes \wedge(u)/ (xu)_A\}\oplus\{(x, u)_A/(xu)_A \}\otimes \Gamma^+[w]$ as an algebra, where $( S )_A$ denotes the ideal of 
$A:= \K[x]/(x^2)\otimes \wedge(u)$ generated by a set $S$. Moreover, $\tau^*(z) = -z$ for $z \in \widetilde{H}^*({\mathcal P}_0(S^n))$. \\
\text{\em ii)} $H^*({\mathcal P}_\tau(S^{2m}))\cong \wedge(x\otimes u)\otimes \Gamma[w]$ as an algebra  
and $\tau^*(z) = z$ for each element 
$z \in  \wedge(x\otimes u)\otimes \Gamma[w]$. 
\end{claim}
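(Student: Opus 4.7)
The plan is to apply the Eilenberg--Moore spectral sequence (EMSS) for the pullback square (\ref{eq:PB_diagram}) with $M = S^{2m}$, separately for $g = 0$ and $g = \tau$. Writing $\Lambda = H^*(S^{2m};\K) = \K[x]/(x^2)$ with $\deg x = 2m$, the $E_2$-term is $\text{Tor}^{*,*}_{\Lambda \otimes \Lambda}(\Lambda_g, \Lambda)$, which I compute via the Koszul--Tate resolution ${\mathcal F}$ displayed just before the claim. The key numerical input is that the antipodal map on $S^{2m}$ has degree $-1$ (since $2m$ is even), so $\tau^*(x) = -x$ on $\Lambda$.

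For part i), the module structure $\Lambda_0$ is the standard one; tensoring ${\mathcal F}$ over $\Lambda \otimes \Lambda$ with $\Lambda_0$ gives the complex $\Lambda \otimes \wedge(u) \otimes \Gamma[w]$ with induced differential $\bar d(u) = x - x = 0$ and $\bar d(\gamma_r(w)) = 2xu\,\gamma_{r-1}(w)$. For $p$ odd or $p = 0$, the factor $2$ is invertible, so $xu\,\gamma_r(w)$ becomes exact while $x\gamma_r(w)$ and $u\gamma_r(w)$ (for $r \geq 1$) remain nontrivial cycles; together with $\{1, x, u\}$ at $r = 0$ one recovers exactly the description in part i). For part ii), the twisted right action replaces $1 \otimes x$ by $-x$, yielding $\bar d(u) = 2x$ and $\bar d(\gamma_r(w)) = 0$; now $x\gamma_r(w)$ are boundaries, while $\gamma_r(w)$ and $xu\gamma_r(w)$ (a cycle because $x^2 = 0$) survive, producing the $E_2$-algebra $\wedge(xu) \otimes \Gamma[w]$. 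In both cases the EMSS collapses at $E_2$ because the surviving classes occupy a sparse bidegree pattern leaving no room for nontrivial $d_r$ with $r \geq 2$, and multiplicative extensions are ruled out by a dimension count in each total degree.

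For the $\tau^*$-action, I use the naturality square (\ref{eq:actions}) with $h = \tau$: the induced self-map of the EMSS is the one on $\text{Tor}$ given by $\tau^* \otimes \tau^*$ acting by $x \mapsto -x$ in each copy of $\Lambda$. Lifting to ${\mathcal F}$, the relations $d(u) = x \otimes 1 - 1 \otimes x$ and $d(w) = (x \otimes 1 + 1 \otimes x)u$ force $\tau^*(u) = -u$ and $\tau^*(w) = w$, whence $\tau^*(\gamma_r(w)) = \gamma_r(w)$. Substituting into the surviving generators: in part i), each of $x$, $u$, $x\gamma_r(w)$, $u\gamma_r(w)$ picks up a single sign flip, so $\tau^* = -\text{id}$ on $\widetilde H^*$; in part ii), $\gamma_r(w) \mapsto \gamma_r(w)$ and $xu\gamma_r(w) \mapsto (-x)(-u)\gamma_r(w) = xu\gamma_r(w)$, so $\tau^*$ acts trivially. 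The main technical hurdle I anticipate is the rigorous verification of EMSS collapse and the absence of multiplicative extensions; this should follow from the restricted bidegree pattern and a matching dimension count against the Serre spectral sequence for the fibration $\Omega S^{2m} \to {\mathcal P}_\tau(S^{2m}) \to S^{2m}$.
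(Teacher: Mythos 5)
Your proposal is correct and follows essentially the same route as the paper: the Eilenberg--Moore spectral sequence for the pullback (\ref{eq:PB_diagram}) computed via the Koszul--Tate resolution ${\mathcal F}$, collapse and absence of extensions by (bi)degree reasons, and the $\tau^*$-action read off from a lift of $\tau^*\otimes\tau^*$ to the resolution together with the naturality square (\ref{eq:actions}); your direct computation of the untwisted Hochschild homology in part i) simply reproduces what the paper imports from \cite{N-T} and \cite{K-Y}. The one place where the paper is more explicit than your sketch is the collapse in part ii): total degrees alone do \emph{not} rule out $d_s(\gamma_{p^f}(w)) = \alpha\, (x\otimes u)\gamma_l(w)$ (they can match when $p^f = l+1$), and the contradiction comes from comparing filtration degrees, so your "sparse bidegree pattern" step genuinely requires that filtration-degree check.
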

It follows from Claim \ref{claim:P} i) that $\text{Cotor}^{*,*}_{H^*(G)}({\mathbb Z}/p, H^*({\mathcal P}_0(S^{2m})))= {\mathbb Z}/p$. Moreover,  
Claim \ref{claim:P}  ii) enables us to conclude that 
\begin{eqnarray*}
\text{Cotor}^{i,*}_{H^*(G)}({\mathbb Z}/p, H^*({\mathcal P}_\tau(S^{2m})))\cong \left\{
\begin{array}{ll}
\wedge(x\otimes u)\otimes \Gamma[w] & \text{for $i = 0$} \\ 
0 & \text{for $i \neq 0$}
\end{array}
\right.
\end{eqnarray*}
We have the result. 
\end{proof}

\begin{proof}[Proof of Claim \ref{claim:P}] Let $n$ be an even integer $2m$.
i)  Since ${\mathcal P}_0(S^n)$ is nothing but the free loop space $LS^n$, the result on the algebra structure 
follows from the results in \cite[4.1]{N-T} and \cite[Theorem 2.2]{K-Y}. In order to prove the latter assertion on the action, we represent elements in the Koszul--Tate resolution with the bar resolution ${\mathbb B}^{*,*}$ 
of $\Lambda$ as a left $\Lambda\otimes \Lambda$-module. 
Let $\psi : {\mathbb B}^{*,*} \to {\mathcal F}$ be the chain map constructed in the proof of \cite[Lemma 1.5]{K91}, which induces an isomorphism between the torsion functors. We see that $\psi(x) = x$, 
$\psi(u) = [x\otimes 1-1\otimes x]$ and  
\[
\Psi (1_{\Lambda\otimes\Lambda}[x\otimes 1+1 \otimes x \mid x\otimes 1-1 \otimes x \mid \cdots  \mid x\otimes 1+1 \otimes x \mid x\otimes 1-1 \otimes x]1_\Lambda) =
\gamma_r(w). 
\]
Since $\tau^*(x) = -x$ for $x \in H^*(S^n)$, 
it follows that $\tau^*(u)= -u$ and $\tau^*(\gamma_r(w)) = \gamma_r(w)$. Consider the morphism of spectral sequences induced by the diagram (\ref{eq:actions}). Then, the naturality of the EMSS and the forms of algebra generators of $H^*({\mathcal P}_0(S^n))$ enable us to obtain the latter half of i). 

ii) To compute the cohomology algebra $H^*({\mathcal P}_\tau(S^n))$ for the nontrivial element $\tau\in G$, we consider 
the EMSS $\{\widetilde{E}_r^{*,*}, \widetilde{d}_r\}$ associated 
with the pullback diagram (\ref{eq:PB_diagram}) 
converging to the cohomology $H^*({\mathcal P}_\tau(S^n))$. 
As mentioned in the proof of i),  the $\tau$-action on $H^{n}(S^{n})$ is nothing but the multiplication by $-1$. 
Thus, 
the right $\Lambda \otimes \Lambda$-module structure on $\Lambda_\tau := H^*(S^{n})$ is given by 
$a\cdot (\lambda\otimes \lambda') = - a\lambda \lambda'$ for $a \in \Lambda_\tau$ and $\lambda\otimes \lambda \in \Lambda \otimes \Lambda$. This yields that 
\[
\widetilde{E}_2^{*,*} \cong \text{Tor}_{\Lambda \otimes \Lambda}(\Lambda_\tau, \Lambda) \cong 
(\Lambda_\tau \otimes_{\Lambda\otimes \Lambda} {\mathcal F}, d'(u)= 2x, d'(\gamma_r(w))= 0)\cong \wedge(x\otimes u)\otimes \Gamma[w]
\]
as bigraded algebras, where $\text{bideg} (x\otimes u) = (-1, 2n)$. For degree reasons, we see that the EMSS collapses at the $E_2$-term. In fact, the possibility of 
a nontrivial differential appears as $d_s(\gamma_{p^f}(w)) = \alpha x\otimes u \cdot \gamma_l(w)$ for some positive integers $s$, $f$, $l$ and some $\alpha \in \K$. We observe that $s\geq 2$. Comparing the total degrees of both elements in the equality, we have $(2n-1)p^f +1 = (2n-1) + (2n-2)l$ and then $p^f = l+1$. By comparing the filtration degree, we see that $-2p^f +s  = -1 -2(p^f-1)$ and $s=1$, which is a contradiction. 

We have to show that $\gamma_p^f(w)^p =0$ in $H^*({\mathcal P}_\tau(S^n))$ for $f\geq 0$. 
The extension problems are solved by degree reasons. It turns out that  $H^*({\mathcal P}_\tau(S^n))\cong \text{Tot}\widetilde{E}_\infty^{*,*} \cong \text{Tot}\widetilde{E}_\infty^{*,*}$ as algebras. 

By using the chain map $\Psi : {\mathbb B}^{*,*} \to {\mathcal F}$ as mentioned above, we see that 
$\tau^*(x\otimes u) = (-1)(-1)x\otimes u$ and $\tau^*(\gamma_r(w)) = \gamma_r(w)$. Thus,  by considering again 
the morphism of spectral sequences induced by the diagram (\ref{eq:actions}), we have the result on the action.  
\end{proof}

\begin{rem} The path space ${\mathcal P}_0(S^{n})$ is nothing but the free loop space $LS^n$. Then, the inclusion 
$LS^n= {\mathcal P}_0(S^{n}) \to EG \times {\mathcal P}_0(S^{n})$ defines a natural map 
$\alpha : LS^n \to EG\times_G {\mathcal P}_0(S^{n})$. Moreover, 
we have a commutative diagram 
%\vspace{-0.15cm}
\[
\xymatrix@C20pt@R10pt{
LS^n \ar[r]_-{Lp} \ar@/^1.5pc/[rrr]^{\alpha} & L\R P^n & \coprod_{g \in G}(EG \times_G {\mathcal P}_g(S^{n}))  \ar[l]_-{\overline{p}}^-{\simeq_w} & EG\times_G {\mathcal P}_0(S^{n}) \ar[l]\\
S^n \ar[u]^-s  \ar[r]^-{p} & \R P^n \ar[u]_-s, 
}
\]
where $p$ is the universal cover and $s$ denotes the map that assigns the constant loop at $r$ to each element $r$ in $S^n$ and 
$\R P^n$, respectively.  Thus, it follows that $LS^n$ is weak homotopy equivalent to the path component of $L\R P^n$ consisting of constant loops under the equivalence (\ref{eq:w}). In particular, the proof of Theorem \ref{thm:computation_I} allows us to conclude that the cohomology of the component 
with coefficients in ${\mathbb Z}/p$ 
is isomorphic to ${\mathbb Z}/p$ if $n$ is even. 
\end{rem}

%We may consider a homology sphere $M$ which admits $\mathbb{Z}/2$-action. 
We here attempt to compute the cohomology of the free loop space of a Borel construction with coefficients in $\mathbb{Z}/2$ by using the same method as above. We indeed use the multiplicative structure of the spectral sequence that we apply.  

%\todo{Some comment will be needed: In this computation, we indeed use the multiplicative structure on the spectral sequence that we apply.  }
Let $G$ be the cyclic group $\mathbb{Z}/2$ and $M$ a simply-connected, mod $2$ homology $n$-sphere that admits a $G$-action, where $n \geq 2$.  
Further assume that the Borel cohomology 
$H^*(EG\times_G M; {\mathbb Z}/2)$ is of finite dimension. Let $L_0(EG\times_G M)$ be the connected component of the free loop space 
$L(EG\times_G M)$ containing the constant loops.  
Then, we have the following result. 
\begin{prop}\label{prop:L_0} 
As an algebra,
$
H^*(L_0(EG\times_G M); {\mathbb Z}/2) \cong \Gamma[y]\otimes \big({\mathbb Z}/2[t]/(t^{n+1})\big)
$
for $* \leq 2n -3$, where $\deg y = n-1$ and $\deg  t = 1$. 
\end{prop}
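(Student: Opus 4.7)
The plan is to apply the spectral sequence of Theorem \ref{thm:App} ii) via a Borel-constructed evaluation fibration, using the finiteness hypothesis to pin down the base cohomology. By (\ref{eq:w}) applied to the principal $G$-bundle $EG\times M\to EG\times_G M$, the component $L_0(EG\times_G M)$ is weakly equivalent to $EG\times_G L(EG\times M)\simeq EG\times_G LM$, where the second equivalence uses contractibility of $EG$ and the identification with the constant-loop component uses simple connectivity of $M$. The Borel construction of the $G$-equivariant evaluation $LM\to M$ then yields the fibration
\[
\Omega M\longrightarrow EG\times_G LM\longrightarrow EG\times_G M,
\]
whose Serre spectral sequence I will analyze. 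The base cohomology $H^*(EG\times_G M;\mathbb{Z}/2)\cong\mathbb{Z}/2[t]/(t^{n+1})$, with $\deg t=1$, is forced as follows: in the Serre spectral sequence of $M\to EG\times_G M\to BG$ with $E_2=\mathbb{Z}/2[t]\otimes\mathbb{Z}/2[x]/(x^2)$, $\deg x=n$, the only possibly non-zero differential is the transgression $d_{n+1}(x)=\lambda\cdot t^{n+1}$, and the assumed finiteness forces $\lambda=1$.

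The fiber $\Omega M$ has the mod-$2$ cohomology of $\Omega S^n$, so in the range $*\leq 2n-3$ the only non-zero groups are $H^0(\Omega M;\mathbb{Z}/2)=\mathbb{Z}/2$ and $H^{n-1}(\Omega M;\mathbb{Z}/2)=\mathbb{Z}/2\cdot y$. Each has rank at most one, so the monodromy action of $\pi_1(EG\times_G M)=G$ is trivial in range, yielding
\[
E_2^{p,q}\cong\bigl(\mathbb{Z}/2[t]/(t^{n+1})\bigr)^p\otimes H^q(\Omega M;\mathbb{Z}/2)
\]
for $p+q\leq 2n-3$. Because $H^*(\Omega M;\mathbb{Z}/2)$ is concentrated in degrees $k(n-1)$, the only differential on $y$ with both source and target non-zero in range is $d_n(y)\in E_n^{n,0}=\mathbb{Z}/2\cdot t^n$; all other candidate differentials have source or target outside the range or vanish by degree.

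The principal obstacle is proving $d_n(y)=0$. For this I would invoke the $G$-equivariant section $M\to LM$ sending each point to the corresponding constant loop; Borel-constructing gives a section $\sigma:EG\times_G M\to EG\times_G LM$ of the fibration above. The existence of $\sigma$ forces the edge homomorphism $H^*(EG\times_G M;\mathbb{Z}/2)\to H^*(EG\times_G LM;\mathbb{Z}/2)$ to be a split injection, so no surviving differential can land in the bottom row; in particular $d_n(y)$ cannot equal $t^n$, whence $d_n(y)=0$. The Leibniz rule and the multiplicative structure (\ref{eq:cupPD}) then yield $E_\infty=E_2$ in range, identifying $H^*(L_0(EG\times_G M);\mathbb{Z}/2)$ additively with $\Gamma[y]\otimes\mathbb{Z}/2[t]/(t^{n+1})$ up to total degree $2n-3$. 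The remaining task is to exclude hidden multiplicative extensions; using the section $\sigma$, whose pullback vanishes on $y$ and is the identity on the base classes, together with the filtration argument supplied by (\ref{eq:cupPD}), the products in $H^*(L_0(EG\times_G M);\mathbb{Z}/2)$ match those of the tensor product, giving the claimed algebra isomorphism.
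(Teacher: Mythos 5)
Your argument is correct, but it runs along a genuinely different spectral sequence than the paper's. The paper stays inside the machinery of Theorem \ref{thm:App} ii): it compares the two Cotor-type spectral sequences (both fibered over $BG$, with $E_2 \cong H^*(\text{fibre})\otimes{\mathbb Z}/2[t]$) attached to the groupoids $G\ltimes{\mathcal P}_0(EG\times M)$ and $G\ltimes(EG\times M)$ via the morphism induced by $ev_0$; this requires the full computation $H^*(LM;{\mathbb Z}/2)\cong{\mathbb Z}/2[x_n]/(x_n^2)\otimes\Gamma[y]$ as input, produces the truncation from the differential ${}_2d_{n+1}(x_n)=t^{n+1}$ forced by finiteness, and transports it to the loop-space spectral sequence by naturality. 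You instead first pin down the base $H^*(EG\times_G M;{\mathbb Z}/2)\cong{\mathbb Z}/2[t]/(t^{n+1})$ by the same finiteness argument in the Serre spectral sequence over $BG$, and then run the Serre spectral sequence of the Borel-constructed evaluation fibration $\Omega M\to EG\times_G LM\to EG\times_G M$, so you only need $H^*(\Omega M;{\mathbb Z}/2)\cong\Gamma[y]$ and the (correctly handled) triviality of the monodromy in the stated range. Both proofs turn on the same two pivots --- finite-dimensionality of $H^*_G(M)$ to kill $t^{n+1}$, and the constant-loop section of $ev_0$ to rule out $d_n(y)=t^n$ --- and your check that no other differentials or extensions occur for $*\leq 2n-3$ is sound. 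Two cosmetic points: your opening sentence announces the spectral sequence of Theorem \ref{thm:App} ii), and the final paragraph cites the product formula (\ref{eq:cupPD}), but what you actually use is the ordinary multiplicative Serre spectral sequence, so those references should be replaced by the standard multiplicativity of the Serre spectral sequence; and the identification $L_0(EG\times_G M)\simeq_w EG\times_G LM$ is exactly the content of Corollary \ref{cor:L_Borel} restricted to the identity component, which you could cite directly. The trade-off: your route is more elementary and needs less input (only $\Omega M$, not $LM$), while the paper's route is chosen to showcase the multiplicative structure of its own spectral sequence, which is the stated purpose of the proposition.
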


\begin{proof}
To obtain the isomorphism, 
we compare spectral sequences given by the groupoid 
$
\G_1:=\xymatrix@C15pt@R5pt{[G\times {\mathcal P}_0(EG \times M)  \ar@<-0.5ex>[r] \ar@<0.5ex>[r]& {\mathcal P}_0(EG\times M) ]}
$
and the translation groupoid $\G_2:=\xymatrix@C15pt@R5pt{[G\times (EG\times M)  \ar@<-0.5ex>[r] \ar@<0.5ex>[r]& EG\times M]}$\!, respectively. 
Let $\widetilde{ev_0} : \G_1 \to \G_2$ be the morphism of groupoids induced by the evaluation map 
$ev_0 : {\mathcal P}_0(EG\times M) \to EG \times M$ at $0\in I$, where $ev_0$ is the evaluation map at $0$.  
 Observe that $ev_0$ is a $G$-equivariant map.  
Thus, we have a commutative diagram 
%Then, we can deduce the result. More precisely, we have a commutative diagram 
\[
\xymatrix@C25pt@R18pt{
EG\times_G {\mathcal P}_0(EG \times M) \ar[d]_{1\times ev_0} \ar[r]^-{\overline{p}}_-{\simeq_w}& L_0(EG\times_G M)\ar[d]^{ev_0} \\
EG\times_G (EG \times M)  \ar[r]_-{\simeq_w}^-{q} & EG\times_G M, 
}
\]
where $\overline{p}$ is the weak equivalence described in Proposition \ref{prop:An_orbifold_stack} and 
$q$ is induced by the natural projection.  
Let $\{{}_1E_r^{*,*}, {}_1d_r\}$ and  $\{{}_2E_r^{*,*}, {}_2d_r\}$ be the spectral sequences in Theorem \ref{thm:App} ii) with coefficients in ${\mathbb Z}/2$ constructed by groupoids $\G_1$ and $\G_2$, respectively. 
We observe that ${\mathcal P}_0(EG \times M) = L(EG\times M) \simeq LM$. 

%In what follows, the coefficients in the cohomology are omitted. 
By the assumption on $M$, 
we see that $H^*(LM)\cong {\mathbb Z}/2[x_n]/(x_n^2)\otimes \Gamma[y]$ as an algebra, where $\deg x_n = n$ and $\deg y = n-1$. In fact, the isomorphism for $n$ odd follows from \cite[Theorem 2.1]{K-Y}. Moreover, the computation in \cite[4.1]{N-T} gives the result for $n$ even. 
Indeed, we need an explicit form of the mod $2$ cohomology group of $\Omega M$ as an input datum; that is, $H^*(\Omega M; {\mathbb Z}/2)\cong \Gamma[y]$, where $\deg y = n-1$. This fact follows from the computation of the EMSS converging to $H^*(\Omega M; {\mathbb Z}/2)$ with $E_2^{,*}\cong \text{Tor}_{H^*(M)}^{*,*}({\mathbb Z}/2, {\mathbb Z}/2)$. 

Since each dimension of $A^i:=H^i({\mathcal P}_0(EG \times M))$ is one or zero. Thus, the $G$-action on $A^i$ is trivial. This fact and the formula (\ref{eq:cupPD}) allow us to deduce that  
\[
{}_1E_2^{*,*} \cong \text{Cotor}_{H^*(G)}^{*,*}({\mathbb Z}/2, A^*)\cong A^*\otimes {\mathbb Z}/2[t] \cong {\mathbb Z}/2[x_n]/(x_n^2)\otimes \Gamma[y] \otimes {\mathbb Z}/2[t]
\]
as algebras, where $\text{bideg}\ t = (1, 0)$. The same argument as above is applicable to the spectral sequence $\{{}_2E_r^{*,*}, {}_2d_r\}$. Then, it follows  that 
\[
{}_2E_2^{*,*} \cong \text{Cotor}_{H^*(G)}^{*,*}({\mathbb Z}/2, H^*(M))\cong  {\mathbb Z}/2[x_n]/(x_n^2)\otimes {\mathbb Z}/2[t]
\] 
as algebras. 

We consider the morphism $\{f_r\} : \{{}_2E_r^{*,*}, {}_2d_r\} \to \{{}_1E_r^{*,*}, {}_2d_r\}$ of spectral sequences induced by 
$\widetilde{ev_0}$. The construction of the spectral sequence yields that  $f_2(x_n) = x_n$ and $f_2(t)=t$. By assumption, the vector space $H^*(EG\times_G M)$ is of finite dimensional. Therefore, we see that  ${}_2d_{n+1}(x_n) = t^{n+1}$ and hence ${}_1d_{n+1}(x_n) = t^{n+1}$. To complete the proof, it remains to show that ${}_1d_{n-1}(y) = 0$ but not ${}_1d_n(y) = t^n$. 
Since $ev_0 :  L_0(EG\times_G M) \to EG\times_G M$ has a section, it follows that the map $ev_0^*$ induced on the cohomology is a monomorphism. 
This implies that ${}_1d_{n-1}(y) = 0$. 
\end{proof}

\begin{rem} In the spectral sequence $\{{}_1E_r^{*,*}, {}_1d_r\}$ above, 
there is a possibility that ${}_1d_n(\gamma_2(\overline{x_n}))= \overline{x_n}\otimes t^n$. 
In the computation above, we can use the Leray--Serre spectral sequence for a Borel fibration of the form 
$X \to EG\times_G X \to BG$ instead of the spectral sequence in Theorem \ref{thm:App} ii). 
However, the same possibility of the non-trivial differential as above remains. % that the $E_n$-term has the same non-trivial differential as above. 
\end{rem}

\begin{rem} We observe that Proposition \ref{prop:L_0} is applicable to the real projective space. Let $M$ be the $3$-dimensional real projective space ${\mathbb R}P^3$ which is regarded as the homogeneous space of $S^3=SU(2)$ by the center ${\mathbb Z}/2$. Since $M$ is an H-space, 
it follows that $L(M) \simeq \Omega_{[e]} M \times M \simeq (\Omega_e S^3 \coprod \Omega_e S^3) \times M$, where $e\in S^3$ denotes the unit; see  the proof of Proposition \ref{prop:An_orbifold_stack} for the second weak homotopy equivalence.  Then it turns out that 
$H^*(LM; {\mathbb Z}/2)\cong (\Gamma[s^{-1}x])^{\oplus 2}\otimes \big({\mathbb Z}/2[t]/(t^{4})\big)$ as an algebra. 
\end{rem}

\section{A spectral sequence for a Borel construction}\label{GlobalQuotients}
In the previous section, we apply the spectral sequence in Theorem \ref{thm:App} ii) to a computation of the cohomology of a transformation groupoid. 
In this section, the result is generalized with a description of the cotorsion functor by the cohomology of a finite group. 
Moreover, we give a spectral sequence for computing the cohomology of the classifying space of {\it inertia} groupoids, while there is no computational example. 

We begin with a lemma relating the cotorsion functor to the cohomology of a group.   
Let $G$ be a finite group and $N$ a finite-dimensional left $\K[G]^\vee$-comodule. Then 
the module $N^\vee$ is regarded as a left $\K[G]$-module via the natural map
$\nabla^\vee  :  \K[G]\otimes N^\vee \to N^\vee$ induced by the left comodule structure $\nabla$ on $N$. 
Thus, by using the isomorphism $N \cong (N^\vee)^\vee$, we consider $N$ a right $\K[G]$-module. 

For a Hopf algebra $A$ with antipode $S$, we have an injective algebra homomorphism $\delta : A \to A^{e}= A\otimes A^{\text{op}} $ defined by 
$\delta(a) = \sum a_1\otimes S(a_2)$; see \cite[Lemma 9.4.1]{Wh}. 

\begin{lem}\label{lem:Cotor_G} 
Under the setup above, 
there are isomorphisms of vector spaces 
\[
\text{\em Cotor}^*_{\K[G]^\vee}(\K, N) \cong \text{\em HH}^*(\K[G], \K\otimes N) \cong \text{\em Ext}^*_{\K[G]}(\K, \K\otimes N)= H^*(G, N). 
\] 
Here the module $N$ in the Hochschild cohomology is regarded as a right $\K[G]$-module mentioned above. 
Moreover, the module $\K\otimes N$ in the group cohomology is considered a left $\K[G]$-module via the monomorphism $\delta$. 
\end{lem}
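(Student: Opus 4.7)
The plan is to identify each of the three cohomology theories in the statement with a common cochain complex. For the first isomorphism, I will compute $\mathrm{Cotor}^*_{\K[G]^\vee}(\K, N)$ using the normalized cobar complex, whose $n$-th term is $(\K[G]^\vee)^{\otimes n} \otimes N$ and whose differential uses the comultiplication of $\K[G]^\vee$ together with the coaction $\nabla : N \to \K[G]^\vee \otimes N$. Since $G$ is finite, the canonical pairing yields an isomorphism $(\K[G]^\vee)^{\otimes n} \otimes N \cong \mathrm{Hom}_\K(\K[G]^{\otimes n}, N)$, and under this duality the comultiplication of $\K[G]^\vee$ dualizes to the multiplication of $\K[G]$ while $\nabla$ dualizes to the right $\K[G]$-action on $N$ described in the setup. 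On the other hand, equipping $\K \otimes N$ with the $\K[G]$-bimodule structure whose left action factors through the counit $\varepsilon$ and whose right action is the given one on $N$, the normalized Hochschild cochain complex for $\mathrm{HH}^*(\K[G], \K \otimes N)$ also has $n$-th term $\mathrm{Hom}_\K(\K[G]^{\otimes n}, \K \otimes N) \cong \mathrm{Hom}_\K(\K[G]^{\otimes n}, N)$, with a coboundary whose head term collapses (because the left action is via $\varepsilon$) and whose tail term is exactly the right multiplication on $N$. A term-by-term comparison matches this complex with the dualized cobar complex.

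For the second isomorphism I will use the $\delta$ construction. Because the left action on $\K \otimes N$ factors through $\varepsilon$, the algebra homomorphism $\delta(g) = g \otimes g^{-1}$ recasts $\K \otimes N$ as a left $\K[G]$-module with $g \cdot (k \otimes n) = k \otimes n g^{-1}$. The untwisting isomorphism $\K[G] \otimes \K[G] \to \K[G] \otimes \K[G]$, $g \otimes h \mapsto g \otimes g h$, shows that $\K[G]^{e}$ is free as a left $\K[G]$-module via $\delta$, and consequently the bar resolution of $\K[G]$ as a $\K[G]^{e}$-module, restricted along $\delta$, gives a projective resolution of $\K$ as a left $\K[G]$-module. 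Applying $\mathrm{Hom}$ with coefficients in $\K \otimes N$ then yields identical cochain complexes, producing the isomorphism $\mathrm{HH}^*(\K[G], \K \otimes N) \cong \mathrm{Ext}^*_{\K[G]}(\K, \K \otimes N)$. The final equality $\mathrm{Ext}^*_{\K[G]}(\K, \K \otimes N) = H^*(G, N)$ is then a tautology once one observes that, under the identification $\K \otimes N \cong N$, the $\delta$-twisted action $g \cdot n = n g^{-1}$ is the standard way of turning the right $\K[G]$-module $N$ into a left $\K[G]$-module for the purposes of group cohomology.

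The main technical obstacle will be the first isomorphism: one must carefully match signs and face operators between the cobar differential for the cotorsion product and the Hochschild coboundary, after invoking the finite-dimensional duality. The remaining steps are standard consequences of the Hopf algebra structure on $\K[G]$ and amount to the untwisting trick provided by $\delta$; in particular none of the constructions depend on anything beyond the finiteness of $G$ and the Hopf algebra identity for the antipode, so that the whole chain of isomorphisms is natural in $N$.
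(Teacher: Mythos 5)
Your overall strategy is sound and, for the second and third isomorphisms, is essentially the route the paper takes: the paper quotes Witherspoon's Lemma 9.4.2 and Theorem 9.4.5 together with the Eckmann--Shapiro lemma, which is exactly the $\delta$-untwisting mechanism you invoke. For the first isomorphism the paper simply cites Abrams--Weibel (Theorem 3.4), whereas you propose a direct term-by-term comparison of the normalized cobar complex with the normalized Hochschild complex after dualizing $(\K[G]^\vee)^{\otimes n}\otimes N\cong \mathrm{Hom}_\K(\K[G]^{\otimes n},N)$; that is a legitimate, more self-contained proof of the special case needed here (the head term does vanish provided you normalize by the augmentation ideal $\ker\varepsilon$, which is the dual of $\mathrm{coker}(\K\to\K[G]^\vee)$), and your deferral of the sign bookkeeping is reasonable.

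There is, however, one step in your second isomorphism that fails as written: the claim that the bar resolution of $\K[G]$ over $\K[G]^e$, \emph{restricted} along $\delta$, is a projective resolution of $\K$ as a left $\K[G]$-module. Restriction along $\delta$ sends the augmentation $B_\bullet(\K[G])\to\K[G]$ to a resolution of $\K[G]^{\mathrm{ad}}$, i.e.\ $\K[G]$ with the conjugation action $g\cdot a=gag^{-1}$, and $\K$ is only a direct summand of $\K[G]^{\mathrm{ad}}$ (spanned by the identity element), not all of it; moreover $\mathrm{Hom}_{\K[G]}(\mathrm{Res}_\delta B_n,T)$ is strictly larger than $\mathrm{Hom}_{\K[G]^e}(B_n,T)$, so the two Hom-complexes are not ``identical.'' The adjunction has to be run in the opposite direction: one shows that $\K[G]\cong \K[G]^e\otimes_{\K[G]}\K$ as $\K[G]^e$-modules (this is Witherspoon's Lemma 9.4.2), uses your untwisting isomorphism to see that $\K[G]^e$ is free --- hence flat and projective-preserving --- as a $\K[G]$-module via $\delta$, and then \emph{induces} a projective resolution $Q_\bullet\to\K$ of the trivial module up to $\K[G]^e\otimes_{\K[G]}Q_\bullet\to\K[G]$; Eckmann--Shapiro then gives
$\mathrm{Ext}^*_{\K[G]^e}(\K[G],T)\cong\mathrm{Ext}^*_{\K[G]^e}(\K[G]^e\otimes_{\K[G]}\K,T)\cong\mathrm{Ext}^*_{\K[G]}(\K,\mathrm{Res}_\delta T)$.
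With that correction the rest of your argument, including the identification of $\mathrm{Res}_\delta(\K\otimes N)$ with the standard left module $g\cdot n=ng^{-1}$, goes through and agrees with the paper.
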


\begin{proof}
The first isomorphism follows from \cite[Theorem 3.4]{A-W}. As for the second isomorphism, we repeat the proof of 
\cite[Theorem 9.4.5]{Wh} verbatim. For a general Hopf algebra $A$, the result \cite[Lemma 9.4.2]{Wh} enables us to conclude that 
the map $f : A \to A^e\otimes_A \K$ defined by $f(a) = a\otimes 1\otimes 1$ is an isomorphism of $A^e$-modules, where $A^e$ denotes the enveloping algebra of $A$. Then, it follows  
 that $\text{Ext}^n_{A^e}(A, T)\cong \text{Ext}^n_{A^e}(A^e\otimes_A\K, T)\cong 
\text{Ext}_A^n(\K, T)$ for a left $A^e$-module $T$. We observe that the latter isomorphism is given by Eckmann-Shapiro Lemma; see, for example, 
\cite[Lemma A.6.2]{Wh}. This completes the proof. 
\end{proof}

We can generalize the computation in Section \ref{sect:example(s)}.  

\begin{thm}\label{thm:L_Cotensor} \text{\em (cf. \cite[Chapter II, Theorem 19.2]{Bredon})} 
Let $G$ be a finite group acting on a space $X$ whose cohomology with coefficients in $\K$ is locally finite. 
Suppose that $\text{\em ch}(\K)$ the characteristic of $\K$ is coprime with 
$|G|$. Then as an algebra 
\[
H^*(EG\times_G X; \K)\cong \K\Box_{\K[G]^\vee}H^*(X; \K), 
\]
where the right-hand side algebra denotes the cotensor product of the trivial right $\K[G]^\vee$-comodule $\K$ and the left 
$\K[G]^\vee$-comodule $H^*(X; \K)$. %; see Section \ref{sect:example(s)} for the notations.
\end{thm}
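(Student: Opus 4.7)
\medskip
\noindent
\emph{Proof plan.} My plan is to invoke Theorem \ref{thm:App} ii) and observe that, under the coprimality hypothesis between $\text{ch}(\K)$ and $|G|$, the resulting multiplicative spectral sequence is concentrated in a single column on its $E_2$-page.

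First I would verify the hypotheses of Theorem \ref{thm:App} ii): since $G$ is finite, $H^*(G;\K)$ is finite-dimensional and \emph{a fortiori} locally finite, while $H^*(X;\K)$ is locally finite by assumption. Hence Theorem \ref{thm:App} ii) yields a multiplicative first-quadrant spectral sequence $\{E_r^{*,*},d_r\}$ converging to $H^*(EG\times_G X;\K)$ as an algebra, with
\[
E_2^{p,q}\cong \text{Cotor}^{p,q}_{\K[G]^\vee}(\K,H^*(X;\K)).
\]
Next I would compute the $E_2$-term by applying Lemma \ref{lem:Cotor_G}, which identifies this cotorsion with the group cohomology $H^p(G,H^q(X;\K))$ for the appropriate twisted $\K[G]$-module structure on $H^q(X;\K)$. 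Because $|G|$ is invertible in $\K$, Maschke's theorem implies that $\K[G]$ is semisimple, so $H^p(G,-)$ vanishes identically for $p>0$ on every $\K[G]$-module. Thus the $E_2$-page is concentrated in the column $p=0$, where
\[
E_2^{0,q}\cong H^0(G,H^q(X;\K)) \cong \K\Box_{\K[G]^\vee}H^q(X;\K),
\]
the last identification being the degree-zero case of the cobar-complex computation of $\text{Cotor}^0$.

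Because $E_2^{p,q}=0$ for $p>0$, every differential $d_r$ ($r\geq 2$) is zero, so $E_2=E_\infty$; the induced filtration on $H^*(EG\times_G X;\K)$ then has a single non-trivial associated-graded piece, so no extension problem arises and the identification is additive. The step I expect to require the most care, and so the main obstacle, is promoting this to an algebra isomorphism. For this I would read off the product on $E_2^{0,*}$ directly from the explicit formula (\ref{eq:cupPD}): for $\omega\in C^{0,q}$ and $\eta\in C^{0,q'}$, both horizontal face-operator composites $(d^h_{p+1}\cdots d^h_{p+p'})^*$ and $(d^h_0\cdots d^h_{p-1})^*$ are empty and hence equal to the identity, and the sign $(-1)^{qp'}$ becomes trivial, so $\omega\cup_T\eta=\omega\cup\eta$ reduces to the ordinary cup product on $C^{0,*}=C^*(X;\K)$. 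It follows that the algebra structure on $E_2^{0,*}$ is the cup product on $H^*(X;\K)$ restricted to its $G$-invariant subalgebra, which under the cotensor description of $H^0(G,-)$ is precisely $\K\Box_{\K[G]^\vee}H^*(X;\K)$, yielding the desired algebra isomorphism.
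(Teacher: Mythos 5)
Your proposal is correct and follows essentially the same route as the paper: invoke Theorem \ref{thm:App} ii), use Lemma \ref{lem:Cotor_G} together with the coprimality of $\text{ch}(\K)$ and $|G|$ (i.e.\ semisimplicity of $\K[G]$) to kill $\text{Cotor}^{p,*}$ for $p>0$, and conclude collapse at $E_2$ with no extension problem. The only cosmetic difference is that you identify the product on $E_2^{0,*}$ by specializing the formula (\ref{eq:cupPD}) to filtration degree zero, whereas the paper argues structurally that the coaction is an algebra map so the cotensor product is a subalgebra of $H^*(X;\K)$; both yield the same identification.
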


\begin{proof} 
In view of Theorem \ref{thm:App} ii), we have a spectral sequence $\{E_r^{*,*}, d_r\}$ converging to 
$H^*(EG\times_G X)$ with $E_2^{*,*} \cong\text{Cotor}_{H^*(G)}^{*,*}(\K, H^*(X))$ as an algebra. Since $H^*(G) =H^0(G) =\K[G]^\vee$ 
and the characteristic of $\K$ is coprime with 
$|G|$, it follows from Lemma \ref{lem:Cotor_G} that 
%$\text{Cotor}^{p,q}_{H^*(G)}(\K, H^*({\mathcal P}_G(M))\cong \text{Cotor}_{\K[G]^\vee}^p(\K, H^q({\mathcal P}_G(M)))$. 
%\text{Cotor}_{\K[G]^\vee}^i(\K, H^*({\mathcal P}_G(M)))
\begin{eqnarray*}
           \text{Cotor}^{p,q}_{H^*(G)}(\K, H^*(X)) \cong 
\text{Cotor}_{\K[G]^\vee}^p(\K, H^q(X)) =
\left\{
\begin{array}{ll}
\K\Box_{\K[G]^\vee}H^q(X) & \text{for $p = 0$} \\ 
0 & \text{for $p \neq 0$}.
\end{array}
\right.
\end{eqnarray*}
We see that  the coalgebra structure $\nabla : H^*(X) \to H^*(G)\otimes H^*(X)$ is a morphism of algebras. Then, the cotensor product $\K\Box_{\K[G]^\vee}H^*(X)$ is a subalgebra of $H^*(X)$. It turns out that the spectral sequence of algebras collapses at the $E_2$-term and there is no extension problem. We have the result. \end{proof}

\begin{rem}\label{rem:naturality}
Let  $G$ be a finite group acting on a space $X$. Then, the inclusion from the unit to $G$ gives rise to a morphism $\xymatrix@C12pt@R5pt{\iota : [\{e\}\times X \ar@<-0.5ex>[r] \ar@<0.5ex>[r]& X] \ar[r] & [G\times X \ar@<-0.5ex>[r] \ar@<0.5ex>[r]& X]}$ of the translation groupoids, whose domain is the trivial one. The naturality of the spectral sequence in Theorem \ref{thm:main}  and  the proof of Theorem \ref{thm:App} ii) enable us to obtain a morphism 
$\{\iota^*_r\} : \{E_r^{*,*}, d_r\} \to  \{{}^\backprime E_r^{*,*}, {}^\backprime d_r\}$  
of spectral sequences and a commutative diagram 
 \[
\xymatrix@C25pt@R18pt{
 H^*(EG\times_G X;K) \ar[d]_{j^*}  \ar@{->>}[r]& E_\infty^{0, *} \ar[d]_{\iota^*_\infty} \  \ar@{>->}[r]& E_2^{0, *}\ar[d]_{\iota^*_2} \ar[r]^-{\cong} & \K\Box_{\K[G]^\vee}H^*(X; \K)  \ar[d]^{inclusion} \\
 H^*(X; \K) \ar[r]_{=}&  {}^\backprime E_\infty^{0, *}  \ar[r]_{=} &  {}^\backprime E_2^{0, *} \ar[r]_{=}&  H^*(X; \K),
}
\]
where $j$ is the composite $X \to E\{e\}\times X \to EG\times X \to EG\times_G X$. 
The proof of Theorem \ref{thm:L_Cotensor} shows that the upper arrows are isomorphisms if the characteristic of $\K$ is coprime with $|G|$. Then, the isomorphism of algebras in the theorem is regarded as the map induced by $j : X\to EG\times_G X$ mentioned above. 
\end{rem}

Theorem \ref{thm:L_Cotensor} allows us to compute the cohomology algebra of the free loop space of a Borel construction.  

\begin{cor}\label{cor:L_Cotensor}\text{\em (cf. \cite[Corollary 6.5]{L-U-X})}
Let $G$ be a finite group acting on a space $M$ and ${\mathcal P}_G(M)$ the space defined in (\ref{eq:P_G}). 
Suppose that $H^*({\mathcal P}_G(M); \K)$ is locally finite and $(\text{\em ch}(\K), |G|) =1$. Then as an algebra 
\[
H^*(L(EG\times_G M); \K)\cong \K\Box_{\K[G]^\vee}H^*({\mathcal P}_G(M); \K).
\]
%In particular, for a simply-connected Lie $G$ and a finite subgroup $H$ of $G$, one has an isomorphism 
%$H^*(L(G/H); \K)\cong \K\Box_{\K[H]^\vee}H^*({\mathcal P}_H(G); \K)$ of algebras 
%provided the characteristic of $\K$ is coprime with $|H|$.
\end{cor}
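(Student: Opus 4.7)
The plan is to reduce to Theorem \ref{thm:L_Cotensor} by passing through the free $G$-space $EG \times M$. Since the diagonal $G$-action on $EG \times M$ is free, the quotient map $EG \times M \to EG\times_G M$ is a principal $G$-bundle, so the equivalence (\ref{eq:w}) yields a weak equivalence
\[
EG\times_G {\mathcal P}_G(EG\times M) \stackrel{\simeq_w}{\longrightarrow} L(EG \times_G M).
\]
Applying Theorem \ref{thm:L_Cotensor} to the $G$-space ${\mathcal P}_G(EG \times M)$ would then identify the cohomology of the left-hand side with $\K \Box_{\K[G]^\vee} H^*({\mathcal P}_G(EG \times M);\K)$ as an algebra, provided the latter cohomology is locally finite.

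The remaining step is to replace ${\mathcal P}_G(EG \times M)$ by ${\mathcal P}_G(M)$ at the level of $\K[G]^\vee$-comodule algebras. The component-wise product decomposition ${\mathcal P}_g(EG\times M) \cong {\mathcal P}_g(EG) \times {\mathcal P}_g(M)$, which is natural in $g$, makes the $G$-equivariant projection $EG \times M \to M$ induce a $G$-equivariant map $\pi : {\mathcal P}_G(EG\times M) \to {\mathcal P}_G(M)$. To see that $\pi$ is a weak equivalence, I would observe that ${\mathcal P}_g(EG)$ is the pullback of the path fibration $\e_0 \times \e_1 : (EG)^I \to EG \times EG$ along $1 \times g : EG \to EG \times EG$, hence fibers over $EG$ with fiber $\Omega EG$. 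Contractibility of $EG$ forces both the base and $\Omega EG$ to be contractible, so ${\mathcal P}_g(EG)$ is contractible and $\pi$ restricts to a weak equivalence on each component $g \in G$. Thus $\pi$ is a $G$-equivariant weak equivalence, and $\pi^*$ is an isomorphism of $\K[G]^\vee$-comodule algebras on cohomology by the action/coaction correspondence recalled just before the proof of Theorem \ref{thm:computation_I}. In particular, local finiteness transfers from $H^*({\mathcal P}_G(M);\K)$ to $H^*({\mathcal P}_G(EG\times M);\K)$, verifying the hypothesis of Theorem \ref{thm:L_Cotensor}.

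Stringing the isomorphisms together gives
\[
H^*(L(EG\times_G M);\K)\cong H^*(EG\times_G {\mathcal P}_G(EG\times M);\K) \cong \K\Box_{\K[G]^\vee}H^*({\mathcal P}_G(M);\K)
\]
as algebras, as desired. The main point requiring care — really the only nontrivial issue — is ensuring that the cotensor product is functorial for the comodule-algebra isomorphism induced by $\pi$, i.e.\ that $\pi$ respects the coaction. This is immediate from its $G$-equivariance but must be spelled out in order to transport the algebra isomorphism produced by Theorem \ref{thm:L_Cotensor} from $\K\Box_{\K[G]^\vee}H^*({\mathcal P}_G(EG\times M);\K)$ to $\K\Box_{\K[G]^\vee}H^*({\mathcal P}_G(M);\K)$.
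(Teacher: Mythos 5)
Your argument is correct and is essentially the paper's: the paper proves this corollary by combining Theorem \ref{thm:L_Cotensor} with Corollary \ref{cor:L_Borel}, which is exactly the zig-zag of weak equivalences $EG\times_G{\mathcal P}_G(M) \leftarrow EG\times_G{\mathcal P}_G(EG\times M) \rightarrow L(EG\times_G M)$ that you reconstruct. The only cosmetic difference is that you apply the cotensor theorem to ${\mathcal P}_G(EG\times M)$ and then transfer along the comodule isomorphism induced by the projection, whereas the paper first descends $EG\times_G{\mathcal P}_G(EG\times M)\to EG\times_G{\mathcal P}_G(M)$ (using that $p_2:EG\times M\to M$ is a $G$-equivariant homotopy equivalence) and applies Theorem \ref{thm:L_Cotensor} directly to ${\mathcal P}_G(M)$.
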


\begin{proof}
Theorem \ref{thm:L_Cotensor} and Corollary \ref{cor:L_Borel} imply the assertion. 
\end{proof}

\begin{rem}\label{rem:locally_finiteM}
Let $M$ be a simply-connected $G$-space whose cohomology with coefficients in a field $\K$ is locally finite. 
Then, the total complex of the bar complex, which computes the $E_2$-term of the EMSS for the fibre square (\ref{eq:PB_diagram}), 
is locally finite and hence   
so is  $H^*({\mathcal P}_G(M); \K)$ if $G$ is finite. 
%The latter half follows from the first result. 
%%%
% If (1, 1\times g) is relatively \K-formalizable, we may describe $H^*(L(EG\times_G M)$ with a cotensor and the torsion functor.  
%%%%
\end{rem}

\begin{rem}\label{rem:L(M/G)}
Let $G \to M \stackrel{p}{\to} M/G$ be a principal $G$-bundle with finite fibre $G$. The projection $p$ induces natural 
maps $\widetilde{p} :  {\mathcal P}_G(M) \to  L(M/G)$ and $\overline{p} : EG\times_G  {\mathcal P}_G(M) \to L(M/G)$ 
with $\overline{p} \circ j = \widetilde{p}$, where 
$j$ is the map described in Remark \ref{rem:naturality}. 
By virtue of Proposition \ref{prop:An_orbifold_stack} and Theorem \ref{thm:L_Cotensor}, we see that 
$\widetilde{p}$ gives rise to an isomorphism 
$\widetilde{p}^* : H^*(L(M/G); \K)\stackrel{\cong}{\longrightarrow}  K\Box_{\K[G]^\vee}H^*({\mathcal P}_G(M); \K)$ of algebras if 
$(\text{ch}(\K), {G}) =1$.
\end{rem}

\begin{cor}\label{cor:L_Cotensor_HH}
Let $G$ be a finite group and $M$ a simply-connected $G$-space whose rational cohomology is locally finite. 
Assume further that $M$ is formal and for each $g\in G$, the map $g : M \to M$ induced by $g$ via the $G$-action is formal in the sense of \cite[D\'efinition 2.3.3]{V-P}; 
see also \cite[Definition 2.3]{A}. 
Then as an algebra 
\[
H^*(L(EG\times_G M); \Q)\cong \Q\Box_{\Q[G]^\vee}\big(\oplus_{g \in G} HH_*(H^*(M; \Q), H^*(M; \Q)_g)\big ), 
\]
where $HH_*(A, H)$ denotes the Hochschild homology of a graded commutative algebra $A$ with coefficients in an $A^e$-algebra $H$.
\end{cor}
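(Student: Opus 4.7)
The plan is to combine Corollary~\ref{cor:L_Cotensor} with a rational-homotopy computation of $H^*({\mathcal P}_g(M);\Q)$ on each component of ${\mathcal P}_G(M)$. First, since $M$ is simply-connected with locally finite rational cohomology and $G$ is finite, Remark~\ref{rem:locally_finiteM} implies that $H^*({\mathcal P}_G(M);\Q)$ is locally finite, so Corollary~\ref{cor:L_Cotensor} with $\K=\Q$ yields
\[
H^*(L(EG\times_G M);\Q) \cong \Q\Box_{\Q[G]^\vee}H^*({\mathcal P}_G(M);\Q).
\]
The disjoint-union decomposition ${\mathcal P}_G(M)=\coprod_{g\in G}({\mathcal P}_g(M)\times\{g\})$ then reduces the task to identifying $H^*({\mathcal P}_g(M);\Q)$ with $HH_*(H^*(M;\Q),H^*(M;\Q)_g)$ as an algebra, compatibly with the $\Q[G]^\vee$-coaction induced by the action $h\cdot(\gamma,g)=({}_h\gamma,hgh^{-1})$ which permutes the components.

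Next I would analyse the EMSS for the pullback square~(\ref{eq:PB_diagram}) converging to $H^*({\mathcal P}_g(M);\Q)$. By construction its $E_2$-page is
\[
E_2^{*,*}\cong \text{Tor}^{*,*}_{H^*(M)\otimes H^*(M)}(H^*(M)_g,\,H^*(M))\cong HH_*(H^*(M;\Q),\,H^*(M;\Q)_g)
\]
as a bigraded algebra, the last identification being the definition of Hochschild homology with coefficients in the bimodule $H^*(M;\Q)_g$. To pass to the abutment I would argue with Sullivan models: a minimal model $(A,d)$ of $M$ models $M^{[0,1]}\to M\times M$ via the multiplication $A\otimes A\to A$, and a CDGA model of ${\mathcal P}_g(M)$ is the derived tensor product of $A$ with itself over $A\otimes A$, where the second factor is twisted by the Sullivan model of $g$. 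Formality of $M$ replaces $(A,d)$ by $(H^*(M;\Q),0)$, while formality of $g$ means its Sullivan model is homotopic to the cohomology map $g^*$. These quasi-isomorphisms produce a CDGA model of ${\mathcal P}_g(M)$ whose cohomology is exactly $HH_*(H^*(M;\Q),H^*(M;\Q)_g)$, collapsing the EMSS at $E_2$ and trivialising the extension problems.

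Finally I would verify $G$-equivariance. For each $h\in G$, the commutative diagram~(\ref{eq:actions}) gives a morphism of EMSSs associated to $h_*:{\mathcal P}_g(M)\to{\mathcal P}_{hgh^{-1}}(M)$; formality of $h$ shows that on $E_2$-pages this morphism coincides with the Hochschild-homology map induced by $h^*:H^*(M;\Q)\to H^*(M;\Q)$. Summing over $g\in G$ assembles the $\Q[G]^\vee$-comodule structure on $\bigoplus_{g\in G}HH_*(H^*(M;\Q),H^*(M;\Q)_g)$, and cotensoring with $\Q$ over $\Q[G]^\vee$ yields the claimed isomorphism. The main obstacle will be the third step: one must confirm that the formality of $M$ together with the formality of every element of $G$ simultaneously trivialises all higher differentials and extension issues in every EMSS in a way compatible with the $G$-coaction, rather than only producing a graded vector space isomorphism for each individual $g$.
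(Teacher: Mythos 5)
Your proposal is correct and follows essentially the same route as the paper: Corollary~\ref{cor:L_Cotensor} (whose hypotheses you rightly check via Remark~\ref{rem:locally_finiteM} and $\mathrm{ch}(\Q)=0$) combined with the identification $H^*({\mathcal P}_g(M);\Q)\cong HH_*(H^*(M;\Q),H^*(M;\Q)_g)$ obtained from formality through the Eilenberg--Moore/Sullivan-model description of the pullback (\ref{eq:PB_diagram}), with equivariance read off from the diagram (\ref{eq:actions}). The "main obstacle" you flag is exactly what the paper dispatches by observing that formality of $g$ implies formality of $1\times g:M\to M\times M$, so the derived tensor product is computed at the level of models by $HH_*(H^*(M;\Q),H^*(M;\Q)_g)$ as an algebra, making a separate collapse-and-extension argument for the EMSS unnecessary.
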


\begin{proof} 
The assumption on the map $g$ implies that the map $1\times g : M \to M\times M$ is formal. In fact, 
let ${\mathcal M}_M \stackrel{\simeq}{\to} A_{PL}^*(M)$ be a minimal model for $M$, where $A_{PL}(M)$ denotes the complex of polynomial forms on $M$; see Appendix \ref{sect:S-deRham}. 
Suppose that $g$ is $(\varphi_M)$-$(\varphi_M')$-formal with 
quasi-isomorphisms $\varphi_M : {\mathcal M}_M \stackrel{\simeq}{\to} H^*({\mathcal M}_M)$ and 
$\varphi_M' : {\mathcal M}_M \stackrel{\simeq}{\to} H^*({\mathcal M}_M)$. Then the map $1\times g$ is $(\varphi_M\otimes \varphi_M')$-$(\varphi_M')$-formal. This fact  
enables us to deduce that 
\[H^*({\mathcal P}_g(M); \Q)\cong HH_*(A_{PL}^*(M), A_{PL}^*(M)_g\big )\cong HH_*(H^*(M; \Q), H^*(M; \Q)_g\big )
\] 
as algebras. Corollary \ref{cor:L_Cotensor} yields the result. 
\end{proof}

We consider a spectral sequence for the inertia groupoid associated with a topological groupoid 
$\G$; see \cite[Chapters 11 and 16]{BGNX} for such a groupoid and its importance in string topology for stacks. In particular, we have a canonical morphism from the inertia groupoid of $\G$ to the loop groupoid; see \cite[12.4]{BGNX}. 

Let $\G=\xymatrix@C15pt@R5pt{[G_1 \ar@<-0.5ex>[r] \ar@<0.5ex>[r]& G_0]}$ be a topological groupoid. By definition, a space $X$ with a map $p : X \to G_0$ is a 
{\it $\G$-space} if $X$ is endowed with an action $G_1{}^t\times^p_{G_0}X \to X$, where $G_1{}^t\!\times^p_{G_0}X$ denotes the pullback of $p$ along the target map $t : G_1\to G_0$. Then, the {\it translation groupoid} associated with the $\G$-space $X$ is defined by 
$\G \ltimes X := \xymatrix@C15pt@R5pt{[G_1{}^t\!\times^p_{G_0}X \ar@<-0.5ex>[r] \ar@<0.5ex>[r]& X]}$ whose source and target maps are defined by the projection in the second factor and the action, respectively.  

For a topological groupoid $\G$, we define a subspace 
$S_{\G}$ of $G_1$  and map $p : S_{\G} \to G_0$ by $S_{\G}:=\{ g \in G_1 \mid s(g) = t(g) \}$ 
and $p(g) = s(g)$, respectively. Then, the conjugation on $G_1$ gives rise to an action 
$G_1 {}^t\!\times_{G_0}^p S_{\G} \to S_{\G}$. The translation groupoid $\Lambda \G := \G \ltimes S_{\G}$ is 
called the {\it inertia groupoid} associated with $\G$. We also define the {evaluation map} 
\[
%$
ev_0 :  \Lambda \G =\xymatrix@C15pt@R5pt{[G_1 {}^t\!\times_{G_0}^p S_{\G} \ar@<-0.5ex>[r] \ar@<0.5ex>[r]& S_\G]} 
\to \G=\xymatrix@C15pt@R5pt{[G_1 \ar@<-0.5ex>[r] \ar@<0.5ex>[r]& G_0]}
%$
\] 
by the projection in the first factor in morphisms and by the source map of $\G$ in the objects. 

Let $G$ be a finite group and $G \ltimes M := \xymatrix@C15pt@R5pt{[G\times M \ar@<-0.5ex>[r] \ar@<0.5ex>[r]& M]}$ a translation groupoid. Applying the construction above, we have the inertia groupoid $\Lambda (G \ltimes M)$. Moreover, we see that the groupid has the form 
\[
\Lambda(G \ltimes M) = \xymatrix@C15pt@R5pt{[G\times \coprod_{g\in G}(M^g \times \{g\}) \ar@<-0.5ex>[r] \ar@<0.5ex>[r]& \coprod_{g\in G}(M^g\times \{g\})]}, 
\]
where $M^g$ denotes the space of fixed points of $g$. 
Observe that the source map is the projection of the second factor and the conjugation gives rise to the target map $t$, more precisely,  
$t(h, ( m, g))= (h m, hgh^{-1})$; see \cite[(17.2.1)]{BGNX}.  By virtue of Theorem  \ref{thm:App} ii), we have the following result.  

\begin{prop}\label{prop:ss} 
Let $G \ltimes M$ be a transformation groupoid for which $G$ is a finite group and $H^*(M^g)$ is locally finite for each $g \in G$. Then 
there exists a spectral sequence $\{E_r, d_r\}$ converging to 
the cohomology of the classifying space $\text{\em B}\Lambda (G \ltimes M)$ of the inertia groupoid $\Lambda (G \ltimes M)$, 
as an algebra with 
\[
E_2^{p,q}\cong \text{\em Cotor}^{p, q}_{H^0(G)}(\K ,\oplus_{g\in G}H^*(M^g; \K)).
\]
Moreover, the evaluation map gives rise to a morphism from the spectral sequence in Theorem \ref{thm:App} ii) to $\{E_r, d_r\}$. Assume further that $(\text{\em ch}(\K), |G|) =1$, then one has 
$H^*(B\Lambda (G \ltimes M); \K)\cong \K\Box_{\K[G]^\vee}(\oplus_{g\in G}H^*(M^g; \K))$ as an algebra. 
%A spectral sequence converging to the cohomology algebra of an inertia groupoid. 
\end{prop}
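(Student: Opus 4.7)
The plan is to reduce everything to a direct application of the machinery already developed in Theorems~\ref{thm:App} ii) and~\ref{thm:L_Cotensor}. The key observation is that the inertia groupoid $\Lambda(G\ltimes M)$ is itself a transformation groupoid, namely $G \ltimes N$ with $N := \coprod_{g\in G}(M^g\times\{g\})$ and the $G$-action on $N$ given by conjugation $h\cdot(m,g) = (hm, hgh^{-1})$. Since $G$ is finite and each $H^*(M^g;\K)$ is locally finite by hypothesis, the direct sum $H^*(N;\K)\cong \oplus_{g\in G}H^*(M^g;\K)$ is also locally finite, so the hypotheses of Theorem~\ref{thm:App} ii) are satisfied. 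Applying the theorem to $G\ltimes N$ therefore yields a multiplicative spectral sequence converging to $H^*(EG\times_G N;\K) = H^*(\mathrm{B}\Lambda(G\ltimes M);\K)$ with
\[
E_2^{p,q}\cong \mathrm{Cotor}^{p,q}_{H^*(G)}\!\big(\K,\oplus_{g\in G}H^*(M^g;\K)\big).
\]
Since $G$ is finite and discrete, $H^*(G;\K)=H^0(G;\K)=\K[G]^\vee$, which matches the $E_2$-term in the statement.

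For the comparison map, I would observe that the evaluation morphism $ev_0\colon \Lambda(G\ltimes M)\to G\ltimes M$ is a morphism of topological groupoids (it is $G$-equivariant on objects via the inclusion $N\hookrightarrow M$ on each component, and compatible with the structure maps by construction). By functoriality, $ev_0$ induces a morphism of the bisimplicial sets obtained from the nerves, and hence a morphism of the associated double complexes with product~(\ref{eq:cupPD}). The naturality of the filtration giving rise to the spectral sequence in Theorem~\ref{thm:main} then produces the desired morphism $\{ev_0^*_r\}$ from the spectral sequence of Theorem~\ref{thm:App} ii) for $G\ltimes M$ to the spectral sequence of Proposition~\ref{prop:ss}; on $E_2$ it is induced by the inclusions $H^*(M;\K)\to H^*(M^g;\K)^{\vee\vee}$ (i.e.\ the restrictions $H^*(M;\K)\to H^*(M^g;\K)$) summed over $g$.

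Finally, the algebra isomorphism in the coprime case is an immediate corollary: Theorem~\ref{thm:L_Cotensor} applied with $X=N$ and the $G$-action described above gives
\[
H^*(\mathrm{B}\Lambda(G\ltimes M);\K)\cong H^*(EG\times_G N;\K)\cong \K\,\Box_{\K[G]^\vee}\!H^*(N;\K)
\]
as algebras, and substituting $H^*(N;\K)\cong \oplus_{g\in G}H^*(M^g;\K)$ yields the claim. The main point requiring care is bookkeeping the $G$-action on $H^*(N;\K)$: a class supported on the component $M^g$ is sent by $h\in G$ to the pullback along the homeomorphism $M^{hgh^{-1}}\to M^g$, $m\mapsto h^{-1}m$, so the cotensor product genuinely mixes the summands indexed by a conjugacy class. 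Beyond this bookkeeping and the verification that the Künneth-type identification of $E_2$ with the cobar complex for the cotorsion functor goes through when the target is the direct sum $\oplus_g H^*(M^g;\K)$, the proof is a packaging of results already established.
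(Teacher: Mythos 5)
Your proposal is correct and follows exactly the route the paper intends: the paper gives no separate proof of Proposition \ref{prop:ss}, simply noting that the inertia groupoid is itself a transformation groupoid $G\ltimes\coprod_{g}(M^g\times\{g\})$ with conjugation action $t(h,(m,g))=(hm,hgh^{-1})$ and invoking Theorem \ref{thm:App} ii), with the coprime case handled by Theorem \ref{thm:L_Cotensor} (via Lemma \ref{lem:Cotor_G}). Your added bookkeeping of the $G$-action on $H^*(N;\K)$ and the identification of the comparison map induced by $ev_0$ is a faithful elaboration of the same argument.
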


%\begin{rem}
%The construction of the translation groupoid associated with a $\G$-space is applicable to a $\G$-diffeological space for a diffeological groupoid $\G$. Thus, Proposition \ref{prop:ss} is valid after replacing $\G \ltimes M$ to a diffeological transformation groupoid. 
%\end{rem}

Let $G$ be a discrete group. We consider the free loop space $LBG$ of the classifying space $BG = EG/G= EG\times_G\ast$. 
\begin{prop}\label{prop:LBG} \text{\em (cf. \cite[Example 6.2]{L-U-X} and \cite[Proposition 2.12.2]{Benson})} Let $G$ be a finite group. Then as an algebra, 
\[
H^*(LBG; \K) \cong \text{\em Cotor}_{\K[G]^\vee}^*(\K, (\K[G]_{\text{\em ad}})^\vee), 
\]
where $\K[G]_{\text{\em ad}}$ denotes the adjoint representation of $G$. Especially, if $G$ is abelian, then 
$H^*(LBG; \K) \cong H^*(G; \K)^{\oplus |G|}$ as an algebra. 
\end{prop}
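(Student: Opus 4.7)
The plan is to apply the multiplicative spectral sequence of Theorem \ref{thm:App} ii) to the $G$-space $\mathcal{P}_G(EG)$, identifying the convergent target with $H^*(LBG; \K)$ through the weak equivalence (\ref{eq:w}) applied to the universal principal $G$-bundle $EG \to BG$.

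First I will compute $H^*(\mathcal{P}_G(EG); \K)$ as a $\K[G]^\vee$-comodule. Contractibility of $EG$ forces each path space
\[
\mathcal{P}_g(EG) = \{\gamma : [0, 1] \to EG \mid \gamma(1) = g\gamma(0)\}
\]
to be contractible, since evaluation at $0$ realises it as a fibration over $EG$ whose fibre is a space of paths in the contractible $EG$. Hence $\mathcal{P}_G(EG) = \coprod_{g \in G}\mathcal{P}_g(EG) \times \{g\}$ has the weak homotopy type of the discrete set $G$, and the action $h \cdot (\gamma, g) = ({}_h\gamma, hgh^{-1})$ permutes its components precisely by conjugation. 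Passing to $H^0$ yields an isomorphism $H^*(\mathcal{P}_G(EG); \K) \cong (\K[G]_{\text{ad}})^\vee$ of $\K[G]^\vee$-comodules concentrated in cohomological degree zero.

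Next, Theorem \ref{thm:App} ii) supplies a multiplicative spectral sequence
\[
E_2^{p, q} \cong \text{Cotor}^{p, q}_{H^*(G; \K)}(\K, H^*(\mathcal{P}_G(EG); \K)) \Longrightarrow H^*(EG \times_G \mathcal{P}_G(EG); \K) \cong H^*(LBG; \K).
\]
Since $G$ is a finite discrete group, $H^*(G; \K) = \K[G]^\vee$ is also concentrated in degree zero, so the entire cobar bicomplex computing $E_2$ lies in the row $q = 0$. All higher differentials therefore vanish on bidegree grounds, and the filtration of $H^*(LBG; \K)$ has a single nonzero associated graded piece in each total degree, so no extension problem arises; this gives the first assertion as an isomorphism of algebras.

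For the abelian case, the conjugation action on $G$ is trivial, so $(\K[G]_{\text{ad}})^\vee$ becomes the trivial $\K[G]^\vee$-comodule, and under Lemma \ref{lem:Cotor_G} the corresponding $\K[G]$-module structure (via $\delta$) on $\K \otimes (\K[G])^\vee$ is likewise trivial. Hence
\[
\text{Cotor}^*_{\K[G]^\vee}(\K, (\K[G])^\vee) \cong H^*(G; (\K[G])^\vee) \cong H^*(G; \K) \otimes (\K[G])^\vee
\]
by the universal coefficient theorem for group cohomology with trivial coefficients, and since $(\K[G])^\vee \cong \K^{|G|}$ as an algebra (pointwise multiplication of functions on $G$), this reduces to $H^*(G; \K)^{\oplus |G|}$ as an algebra. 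The main technical care required is (i) verifying that the $\K[G]^\vee$-coaction on $H^0(\mathcal{P}_G(EG); \K)$ is genuinely the adjoint comodule structure, which amounts to tracing how components are permuted by the action, and (ii) checking, in the abelian case, that the trivial coaction really does pass through $\delta$ to a trivial module structure so that the final isomorphism is one of algebras and not merely of graded vector spaces.
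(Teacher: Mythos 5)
Your proof is correct and follows essentially the same route as the paper: apply the spectral sequence of Theorem \ref{thm:App} ii) to $\mathcal{P}_G(EG)$ via the weak equivalence (\ref{eq:w}), observe that the weak contractibility of each $\mathcal{P}_g(EG)$ identifies $H^*(\mathcal{P}_G(EG);\K)$ with $(\K[G]_{\text{ad}})^\vee$ concentrated in degree zero so the spectral sequence collapses without extension problems, and use triviality of the conjugation action in the abelian case. The only cosmetic difference is that you route the abelian computation through Lemma \ref{lem:Cotor_G} and universal coefficients, whereas the paper reads off the algebra structure directly from the explicit product formula (\ref{eq:cupPD}) on the cobar complex.
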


\begin{proof} 
It follows that $H_*({\mathcal P}_g(\ast); \K) =\K$ for each $g \in G$. By the definition of the $G$-action on 
${\mathcal P}_G(\ast)$, we see that  $H_*({\mathcal P}_G(\ast); \K)\cong \K[G]_{\text{ad}}$. Then, the weak equivalence in $(\ref{eq:w})$ and the spectral sequence $\{E_r^{*,*}, d_r\}$ in Theorem \ref{thm:App} ii) enable us to deduce the first assertion. Observe that $E_2^{*,q} = 0$ for $q > 0$. 

The latter half follows from the fact that the $G$-action on $\K[G]_{\text{ad}}$ is trivial. In particular, 
the explicit formula (\ref{eq:cupPD}) of the multiplication on the cotorsion product 
yields the result on the algebra structure. 
\end{proof}

\begin{rem} %i) Let $k$ be a commutative ring. Then, by \cite[7.2.10 Theorem and 7.4.2 Proposition]{Loday}, we see that $H_*(LBG; k) \cong 
%HH_*(k[G])\cong H_*(G, k[G]_{\text{\em ad}})$ as $k$-modules. \\
If $G$ is abelian, then the classifying space $BG$ is an H-space. This allows us to obtain a homotopy equivalence $BG\times \Omega BG \stackrel{\simeq}{\to} LBG$ induced by the product on $BG$. The latter half of Proposition \ref{prop:LBG} also follows from the fact. 
\end{rem}

We conclude this section with comments on a spectral sequence converging to the free loop space of a Borel construction, which is obtained by Theorem \ref{thm:App} ii). 

\begin{rem}\label{rem:SS_L}%\todo{Bredon, Benson}
Let $G$ be a finite group with an action on a space $M$ and $\{E_r^{*,*}, d_r\}$ the spectral sequence in Theorem \ref{thm:App} ii) for 
the Borel construction $EG \times_G {\mathcal P}_G(M)$. By Corollary \ref{cor:L_Borel}, we see that 
$L(EG\times_G M)$ is connected to $EG\times_G{\mathcal P}_G(M)$ with weak homotopy equivalences. Thus, 
under the same condition as in Corollary \ref{cor:L_Cotensor} but $(\text{ch}(\K), |G|) =1$, the spectral sequence converges to $H^*(L(EG\times_G M); \K)$ as an algebra. 

The form of the $E_2$-term enables us to deduce that the vertical edge ($q$-axis) $E_2^{0,*}$ is isomorphic to the cotensor product $\K\Box_{\K[G]^\vee}H^*({\mathcal P}_G(M); \K)$, which is a subspace of $H^*({\mathcal P}_G(M); \K)$ detected by the transfer homomorphism if $(\text{ch}(\K), |G|) =1$; see \cite[Chapter II, Theorem 19.2]{Bredon} and Lemma \ref{lem:Cotor_G}. 

We consider the horizontal edge ($p$-axis). Since the trivial map $v : {\mathcal P}_g(M) \to \ast$ gives a $G$-equivariant map $H_0({\mathcal P}_G(M); \K) \to \K[G]_{\text{ad}}$, 
it follows from Proposition \ref{prop:LBG} that the edge $E_2^{*,0}$  is a module over the cotorsion product 
$\text{Cotor}_{\K[G]^\vee}^*(\K, (\K[G]_{\text{ad}})^\vee)$ and hence $H^*(LBG; \K)$ via the morphism 
$$v^* : H^*(LBG; \K) \to \text{Cotor}_{\K[G]^\vee}^{*, 0}(\K, H^*({\mathcal P}_G(M); \K))$$ of algebras induced by $v$. 
Observe that the $v^*$ is an isomorphism of algebras if $M$ is simply-connected. In fact, we see that each ${\mathcal P}_g(M)$ is connected in that case.  
We remark that the underlying algebra $H^*(LBG; \K)$ is indeed the Hochschild cohomology of the group ring $\K[G]$; see \cite[Proposition 2.12.2]{Benson}. 

Each differential on $E_r^{*,0}$ for $r\geq 2$ is trivial. Therefore, the multiplicative structure on the spectral sequence gives rise to an $H^*(LBG; \K)$-module structure on the spectral sequence; that is, each term $E_r^{*,*}$ admits an $H^*(LBG; \K)$-module structure defined by 
\[
\xymatrix@C20pt@R10pt{
\bullet : H^p(LBG; \K)\otimes E_r^{*, *} \ar[r]^-{v^*\otimes 1} & E_2^{p, 0}\otimes E_r^{*,*} \ar@{->>}[r]^-{p_r\otimes 1} &  E_r^{p, 0}\otimes E_r^{*, *} \ar[r]^-{m} &  E_r^{*+p, *},
}
\]
where $p_r$ is the canonical projection and $m$ is the product structure on the $E_r$-term. Thus we see that $d_r(a\bullet x) = (-1)^p a\bullet d_r(x)$ for $a \in H^p(LBG; \K)$ and 
$x \in E_r^{*, *}$. 
\end{rem}

\section{The de Rham cohomology of the diffeological free loop space of a non-simply connected manifold}\label{sect:deRham}

Let $G$ be a finite group acting freely and smoothly on a manifold $M$. Then, a principal $G$-bundle of the form $G \to M \stackrel{p}{\to} M/G$ in the category of manifolds is obtained. 
Let ${\mathcal P}_G^\infty (M)$ be the diffeological space obtained by applying the construction (\ref{eq:P_G}) in $\mathsf{Diff}$ the category of diffeological spaces; see Appendix \ref{sect:App}. We consider a smooth map $\widetilde{p} : {\mathcal P}_G^\infty (M)\to L^\infty (M/G)$ defined by $\widetilde{p}((\gamma, g))= p\circ \gamma$, where 
$L^\infty (M/G)$ denotes the diffeological free loop space of $M/G$; see the pullback diagram (\ref{eq:PB_diagram}).  

For a diffeological space $X$, we may write $H^*_{DR}(X)$ for the singular de Rham cohomology $H^*(A_{DR}(S^D_\bullet(X)))$; see Appendix \ref{sect:App}.  
The purpose of this section is to prove the following theorem. 

\begin{thm}\label{thm:DR} Under the same setting as above, 
suppose further that $M$ is simply connected. Then, the smooth map $\widetilde{p}$ 
gives rise to a well-defined isomorphism 
$$\widetilde{p}^* : H_{DR}^*(L^\infty (M/G)) \stackrel{\cong}{\to} \R\Box_{\R[G]^\vee}H^*_{DR}({\mathcal P}_G^\infty(M))$$ of algebras.  
\end{thm}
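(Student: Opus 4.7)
The plan is to mimic the proof of Corollary \ref{cor:L_Cotensor} in the diffeological setting, substituting the spectral sequence of Theorem \ref{thm:App} iii) for that of Theorem \ref{thm:App} ii). Apply Theorem \ref{thm:App} iii) to the diffeological transformation groupoid
\[
\G \;:=\; \xymatrix@C15pt@R5pt{[G\times {\mathcal P}_G^\infty(M) \ar@<-0.5ex>[r] \ar@<0.5ex>[r]& {\mathcal P}_G^\infty(M)]}.
\]
Since $G$ is finite and discrete, each $\text{Nerve}_n(\G)=G^{\times n}\times {\mathcal P}_G^\infty(M)$, and a K\"unneth argument for the de Rham complex gives $H^q_{DR}(\text{Nerve}_n(\G))\cong (\R[G]^\vee)^{\otimes n}\otimes H^q_{DR}({\mathcal P}_G^\infty(M))$. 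The horizontal complex of the $E_2$-term is then identified with the cobar complex of the $\R[G]^\vee$-comodule $H^*_{DR}({\mathcal P}_G^\infty(M))$, whence
\[
E_2^{p,q}\cong \text{Cotor}^{p,q}_{\R[G]^\vee}\bigl(\R,\,H^*_{DR}({\mathcal P}_G^\infty(M))\bigr).
\]

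Because $|G|$ is invertible in $\R$, Lemma \ref{lem:Cotor_G} forces this cotorsion functor to vanish outside $p=0$, where it coincides with the cotensor product $\R\Box_{\R[G]^\vee}H^*_{DR}({\mathcal P}_G^\infty(M))$. Hence the spectral sequence collapses and there is no extension problem, exactly as in the proof of Theorem \ref{thm:L_Cotensor}. It remains to identify the abutment with $H^*_{DR}(L^\infty(M/G))$ and to recognize the edge homomorphism as $\widetilde{p}^*$. Following Remark \ref{rem:3-2}, if each $\text{Nerve}_n(\G)$ lies in ${\mathcal V}_D$—which I would verify using the smoothing theorem \cite[Theorem 1.7]{Kihara} together with the weak equivalence of Appendix \ref{sect:App3} for the diffeological free loop/path spaces—then the target is isomorphic to $H^*(\|\text{Nerve}_\bullet(D\G)\|;\R)\cong H^*\bigl(EG\times_G D({\mathcal P}_G^\infty(M));\R\bigr)$. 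The simply-connectedness of $M$ lets us invoke the weak equivalence (\ref{eq:w}) of Proposition \ref{prop:An_orbifold_stack} to replace this by $H^*(L(M/G);\R)$, and then the singular de Rham theorem in diffeology \cite[Theorem 2.4]{K}, applied to $L^\infty(M/G)$, identifies this last group with $H^*_{DR}(L^\infty(M/G))$.

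To see that the composite identification is induced by $\widetilde{p}$, I would trace through the morphism of diffeological groupoids $\iota$ from the trivial groupoid on ${\mathcal P}_G^\infty(M)$ into $\G$, as in Remark \ref{rem:naturality}: the edge homomorphism $H^*_{DR}(L^\infty(M/G))\twoheadrightarrow E_\infty^{0,*}\hookrightarrow E_2^{0,*}$ is then, after the identifications above, exactly the restriction map along $\widetilde{p}$. Since $\widetilde{p}$ is $G$-invariant, its image automatically lies in the $G$-coinvariants detected by the cotensor product, giving well-definedness, and collapse of the spectral sequence gives bijectivity.

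The main obstacle is the target identification: verifying that the diagonal of $S^D_\bullet(\text{Nerve}_\bullet \G)$ has the same $\R$-cohomology as the diffeological free loop space $L^\infty(M/G)$. This amounts to checking two compatibility facts at once—first, that each nerve component and the diffeological free loop space belong to the class ${\mathcal V}_D$ so that the smoothing theorem applies (for which Appendix \ref{sect:App3} together with the manifold structure of $M$ and functional-analytic regularity of ${\mathcal P}_G^\infty(M)$ should suffice), and second, that the diffeological analog of the weak equivalence $\overline{p}$ of (\ref{eq:w}) respects the singular de Rham cohomology. Once these are in place, everything else is a collapse-and-edge-homomorphism argument parallel to Corollary \ref{cor:L_Cotensor} and Remark \ref{rem:L(M/G)}.
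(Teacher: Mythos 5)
Your overall strategy differs from the paper's: you want to run the spectral sequence of Theorem \ref{thm:App} iii) directly on the diffeological groupoid $[G\times {\mathcal P}_G^\infty(M) \rightrightarrows {\mathcal P}_G^\infty(M)]$, whereas the paper never builds a diffeological spectral sequence here at all. It first establishes the purely topological statement (Remark \ref{rem:L(M/G)}, via Theorem \ref{thm:L_Cotensor} and Proposition \ref{prop:An_orbifold_stack}) and then transfers it to the diffeological side through a ladder of comparison quasi-isomorphisms: Theorem \ref{thm:smoothing} for $C^\infty(S^1,M/G)$ versus $C^0(S^1,M/G)$, Lemma \ref{lem:I_and_S1} for $L^\infty(M/G)$ versus $C^\infty(S^1,M/G)$, and, crucially, Lemma \ref{lem:smoothing_P} for ${\mathcal P}_g^\infty(M)$ versus ${\mathcal P}_g(M)$.

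The genuine gap in your proposal is exactly the step you flag as ``the main obstacle'': identifying the abutment with $H^*_{DR}(L^\infty(M/G))$. Your plan for that step is to verify that each nerve component, i.e.\ $G^{\times n}\times{\mathcal P}_G^\infty(M)$, lies in ${\mathcal V}_D$ and then invoke the smoothing theorem as in Remark \ref{rem:3-2}. But the smoothing theorem, as available in this paper (Theorem \ref{thm:smoothing}), covers mapping spaces $C^\infty(M,N)$ between finite-dimensional manifolds, and the class ${\mathcal V}_D$ is only known to contain manifolds, Hilbert-modeled manifolds and such mapping spaces. The space ${\mathcal P}_G^\infty(M)$ is a pullback of $(\e_0,\e_1):M^I\to M\times M$ along $1\times g$, and $L^\infty(M/G)$ is likewise a pullback along the diagonal; neither is covered by these results, and Remark \ref{rem:EZ} explicitly warns that the target of the Theorem \ref{thm:App} iii) spectral sequence cannot in general be replaced by a more familiar object. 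This is precisely why the paper proves Lemma \ref{lem:smoothing_P}: it compares $A_{PL}(\mathrm{Sing}_\bullet({\mathcal P}_g(M)))\otimes_\Q\R$ with $A_{DR}(S^D_\bullet({\mathcal P}_g^\infty(M)))$ not by a smoothing/${\mathcal V}_D$ argument but by an Eilenberg--Moore comparison for the defining pullback square, and that is where the hypothesis that $M$ is simply connected actually enters (convergence of the Eilenberg--Moore map). Your proposal instead attributes the simple-connectivity hypothesis to the weak equivalence (\ref{eq:w}) of Proposition \ref{prop:An_orbifold_stack}, which in fact holds for any principal bundle with discrete fibre; this misplacement signals that the essential analytic step --- the de Rham/singular comparison for the diffeological path and loop spaces --- is missing from your argument. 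Without a substitute for Lemma \ref{lem:smoothing_P} (and for Lemma \ref{lem:I_and_S1} on the loop-space side), the identification of the abutment, and hence the proof, does not go through.
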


%\todo{Revise this sentence.} 
Each component ${\mathcal P}_g^\infty(M)$ of ${\mathcal P}_G^\infty(M)$ is constructed with a pullback of the form (\ref{eq:PB_diagram}). Then, by utilizing Theorem \ref{thm:general_main}, 
we may represent algebra generators in the de Rham cohomology $H^*_{DR}(L^\infty(M/G))$ with differential forms on $M$ via Chen's iterated integral map and the {\it factor} map, which connects the Souriau-de Rham complex and the singular de Rham complex; see Appendix \ref{sect:S-deRham}. 
Indeed, the idea is realized in Theorem \ref{thm:deRhamP}.  

Before proving Theorem \ref{thm:DR}, 
we recall the smoothing theorem due to Kihara for a particular case. 
Let $M$ and $N$ be diffeological spaces and $C^\infty(M, N)$ the space of smooth maps from $M$ to $N$ with the functional diffeology. 
We recall the functor $D : \mathsf{Diff} \to \mathsf{Top}$ from Appendix \ref{sect:App}.
Then,  we see that $D(\Delta^n_{st})$ is homeomorphic to $\Delta^n$ the standard $n$ simplex which is a subspace of $\R^{n+1}$; see \cite{Kihara}. Moreover, 
the inclusion  
$i : D(C^\infty(M, N)) \to C^0(DM, DN)$ is continuous; see \cite[Proposition 4.2]{C-S-W}. Thus, it follows that the functor $D$ induces a morphism $\xi :  S^D_\bullet(C^\infty(M, N)) \to \text{Sing}_\bullet(C^0(DM, DN))$ of simplicial sets. 

\begin{thm}\label{thm:smoothing} \text{\em (Smoothing theorem \cite[Theorems 1.1 and 1.7]{Kihara})} Let $M$ and $N$ be 
finite-dimensional manifolds. 
Then, the well-defined map 
$$\xi :  S^D_\bullet(C^\infty(M, N)) \to \text{\em Sing}_\bullet(C^0(DM, DN))$$ is a weak homotopy equivalence. 
\end{thm}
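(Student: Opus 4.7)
The plan is to reduce the statement to a relative smoothing principle via Cartesian closedness, and then carry out the smoothing by a local mollifier argument adapted to Kihara's diffeology on $\Delta^n_{st}$.

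First, by the exponential law in $\mathsf{Diff}$ and its topological analogue, an $n$-simplex of $S^D_\bullet(C^\infty(M,N))$ corresponds to a smooth map $\Delta^n_{st}\times M \to N$, whereas an $n$-simplex of $\text{Sing}_\bullet(C^0(DM,DN))$ corresponds to a continuous map $\Delta^n\times DM \to DN$. Using $D(\Delta^n_{st})\cong\Delta^n$ together with the fact that $D$ preserves the relevant product (since $\Delta^n$ is locally compact Hausdorff, by the result of Christensen--Sinnamon--Wu invoked earlier in the paper), the map $\xi$ becomes, under these identifications, simply the inclusion of smooth families into continuous families. Thus the theorem is equivalent to the assertion that this inclusion of simplicial sets is a weak homotopy equivalence.

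Second, I would prove this by establishing the following relative smoothing claim: given a continuous map $F:\Delta^n\times DM \to DN$ whose restriction to a closed sub-simplicial-complex $L\subset\partial\Delta^n$ already arises from a smooth family with respect to Kihara's diffeology, there exists a continuous homotopy from $F$ to a smooth family, constant on $L\times DM$. Taking $L=\partial\Delta^n$ yields surjectivity on $\pi_n$, and applying the principle one dimension higher to a continuous homotopy between two smooth families yields injectivity. Since both simplicial sets in question are function complexes and in particular Kan, bijectivity on all homotopy groups at all basepoints gives a weak equivalence.

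Third, the smoothing itself would proceed by choosing a sufficiently fine triangulation of $\Delta^n$ and a locally finite cover of $M$ by coordinate charts, working chart-by-chart in $N$ via convolution with a compactly supported mollifier, and then glueing the local smoothings with a smooth partition of unity on $\Delta^n\times M$. Kihara's diffeology on $\Delta^n_{st}$ is tailored for exactly this purpose: it is coarser than the subspace diffeology inherited from $\R^{n+1}$ near each face, so that smooth functions admit collar-like extensions across faces, and so that smooth partitions of unity subordinate to face-compatible covers exist.

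The main obstacle will be the corner and lower-dimensional-face behaviour of $\Delta^n$. In ordinary manifold smoothing one uses a collar neighbourhood of the boundary to extend smoothly; here the analogous step must be carried out intrinsically with respect to Kihara's diffeology on $\Delta^n_{st}$, whose plots near a face satisfy a half-space extendability condition instead of literal extension to an ambient open set. Verifying that the mollified, partition-of-unity-glued family is genuinely a plot in Kihara's sense, and that the construction can be arranged compatibly with the already-smooth restriction on $L$, is the technical heart of the argument; everything else is essentially formal once this local-to-global smoothing machinery is in place.
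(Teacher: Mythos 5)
The paper does not actually prove this statement: it is quoted from Kihara (\cite[Theorems 1.1 and 1.7]{Kihara}), whose proof runs through a substantial model-categorical apparatus --- Kihara's model structure on $\mathsf{Diff}$, the fibrancy of $S^D_\bullet(X)$ for every diffeological space $X$, and Quillen equivalences relating $\mathsf{Diff}$, $\mathsf{Top}$ and simplicial sets --- not through any direct mollification. So you were in effect attempting a new proof of Kihara's smoothing theorem. Your first reduction is sound: the exponential law in $\mathsf{Diff}$ and its topological analogue (the latter valid since $\Delta^n$ is compact and $DM=M$ is locally compact Hausdorff), together with $D(\Delta^n_{st})\cong\Delta^n$ and product-preservation of $D$ for locally compact Hausdorff factors, do identify $\xi$ with the inclusion of smooth families $\Delta^n_{st}\times M\to N$ into continuous families; and since $\text{Sing}_\bullet$ of any space is Kan and $S^D_\bullet(X)$ is Kan for every diffeological space (this is itself a theorem of Kihara, depending on his simplex diffeologies --- it is not a formal consequence of being a ``function complex''), a relative smoothing principle over $(\Delta^n, L)$ would indeed give bijectivity on all homotopy groups and hence the weak equivalence.

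The genuine gap is that this relative smoothing principle \emph{is} the theorem, and your sketch of it does not go through as written. Concretely: (1) chart-by-chart convolution glued by a partition of unity makes no sense in a nonlinear target $N$; one must embed $N$ in Euclidean space by Whitney, mollify there, and retract along a tubular neighborhood --- fixable, but unaddressed. (2) The decisive point, smoothness of the mollified family with respect to Kihara's diffeology on $\Delta^n_{st}$, is misdiagnosed: Kihara's diffeology is \emph{finer}, not coarser, than the subspace diffeology of $\R^{n+1}$ (the inclusion $\Delta^n_{st}\to{\mathbb A}^n$ is smooth by \cite[Lemma 3.1]{Kihara_19}, so every plot of $\Delta^n_{st}$ is in particular a smooth map into the ambient affine space, and for $n\geq 2$ the containment is strict). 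Having \emph{fewer} plots is what makes maps out of $\Delta^n_{st}$ easier to be smooth and is the actual mechanism behind horn inclusions being smooth deformation retracts; since your mollification also acts in the simplex variable, you must check plot-wise smoothness near faces and corners against Kihara's inductively defined plots, and no argument is given. (3) The relative form --- keeping the family fixed over $L$, where it is only known to be smooth for the colimit diffeology of the subcomplex --- needs a smooth collar of $L$ in $\Delta^n_{st}$ and an extension lemma across it, which is again precisely the kind of statement Kihara's machinery exists to supply and which you assert rather than prove. In short, the skeleton (exponential law, Kan complexes, relative smoothing implies weak equivalence) is correct and genuinely different from Kihara's route, but the technical heart is left open, so the proposal does not constitute a proof.
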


For a manifold $M$, we consider the composite 
\[
\xymatrix@C20pt@R15pt{
\lambda:=i\circ (Dj) : D({\mathcal P}_g^\infty(M)) \ar[r] &  D(C^\infty([0, 1], M)) \ar[r] &C^0(D[0, 1], DM), 
}
\]
where $j$ denotes the smooth inclusion ${\mathcal P}_g^\infty(M)\to C^\infty([0, 1], M)$. 
Since $DM = M$ and $D[0, 1]$ is the subspace $I$ of ${\mathbb R}$ (see \cite[Lemma 3.16]{C-S-W}), it follows that 
$\lambda : D({\mathcal P}_g^\infty(M)) \to {\mathcal P}_g(M)$ is continuous. Therefore, the composite 
$\xi':= \lambda_*\circ \xi : S^D_\bullet({\mathcal P}_g^\infty(M)) \to \text{Sing}_\bullet({\mathcal P}_g(M))$ is a morphism of simplicial sets. 

The following lemma is a key to proving Theorem \ref{thm:DR}. 

\begin{lem}\label{lem:smoothing_P} Let $M$ be a simply-connected manifold. Then, one has a sequence of quasi-isomorphisms 
\[
\xymatrix@C18pt@R15pt{
 A_{\text{PL}}(\text{\em Sing}_\bullet({\mathcal P}_g(M)))\otimes_{\Q}\R \ar[r]^{(\xi')^*}_{\simeq} &  
 A_{\text{PL}}(S^D_\bullet({\mathcal P}_g^\infty(M)))\otimes_{\Q}\R 
 \ar[r]^-{\zeta}_-{\simeq} & A_{\text{DR}}(S^D_\bullet({\mathcal P}_g^\infty(M))). 
}
\]
\end{lem}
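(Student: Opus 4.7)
The plan is to treat the two arrows separately. For $\zeta$, the argument is immediate: by \cite[Proposition 3.4]{K}, already cited in the proof of Theorem \ref{thm:main}, there is a natural zig-zag of quasi-isomorphisms of cochain algebras connecting $A_{PL}(K)\otimes_\Q \R$ and $A_{DR}(K)$ for any simplicial set $K$. Specializing to $K = S^D_\bullet({\mathcal P}_g^\infty(M))$ and identifying $\zeta$ with the relevant natural quasi-isomorphism finishes this part, so the work lies entirely in the first arrow.

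For $(\xi')^*$, since $A_{PL}$ sends weak equivalences of simplicial sets to quasi-isomorphisms of $\Q$-CDGAs (and tensoring with $\R$ preserves quasi-isomorphisms), it suffices to prove that $\xi' = \lambda_*\circ\xi$ is a weak homotopy equivalence of simplicial sets. I would handle $\xi$ by the smoothing theorem (Theorem \ref{thm:smoothing}); its hypothesis that ${\mathcal P}_g^\infty(M)$ lies in the class ${\mathcal V}_D$ holds because ${\mathcal P}_g^\infty(M)$ is a subspace of the Fr\'echet manifold $C^\infty([0,1], M)$, and such spaces of smooth maps between finite-dimensional manifolds are in ${\mathcal V}_D$ by \cite[Chapter 11.4]{Kihara}.

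It remains to show that the continuous comparison map $\lambda : D({\mathcal P}_g^\infty(M)) \to {\mathcal P}_g(M)$ is a weak homotopy equivalence. My plan is to realize ${\mathcal P}_g^\infty(M)$ as the diffeological pullback of $\e_0\times\e_1 : C^\infty([0,1], M) \to M\times M$ along $1\times g : M\to M\times M$, and ${\mathcal P}_g(M)$ as the analogous topological pullback (\ref{eq:PB_diagram}). Applying $D$ and using that $M\times M$ is locally compact Hausdorff so that $D$ preserves this finite product by \cite[Lemma 4.1]{C-S-W}, one gets a comparison map from the $D$-image of the diffeological pullback to the topological one. The smoothing theorem identifies $D(C^\infty([0,1], M)) \simeq_w M^{[0,1]}$, and since $\e_0\times\e_1$ is a submersion (respectively, a Serre fibration) in the two settings, both squares are homotopy pullbacks; under the assumption that $M$ is simply connected, the fibre comparison is a weak equivalence, and the usual long-exact-sequence comparison yields that $\lambda$, hence $\xi'$, is a weak equivalence.

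The hard part is this last step: $D$ does not preserve general limits, so one cannot simply paste pullback squares. Concretely, one needs to verify that the D-topology turns the diffeological pullback defining ${\mathcal P}_g^\infty(M)$ into something weakly equivalent (not just continuously mapping) to the topological pullback defining ${\mathcal P}_g(M)$. This is precisely the type of argument carried out in Appendix \ref{sect:App3} for the free loop space $L^\infty$, and I expect the present lemma to be proved by adapting that argument almost verbatim, with the diagonal map $M\to M\times M$ replaced by the section $1\times g$. The simple connectedness assumption should enter only to guarantee that the comparison of homotopy fibres is a weak equivalence in the usual Serre-spectral-sequence sense.
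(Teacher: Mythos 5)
Your treatment of $\zeta$ is fine and matches the paper (which invokes \cite[Corollary 3.5]{K} to get the single quasi-isomorphism $\zeta$). For $(\xi')^*$, however, you are attempting to prove a strictly stronger statement than the lemma requires --- that $\xi'$ is a weak homotopy equivalence of simplicial sets --- and both of the steps you need for that are genuinely problematic. First, your claim that ${\mathcal P}_g^\infty(M)$ lies in ${\mathcal V}_D$ because it is a subspace of $C^\infty([0,1],M)$ is not justified: membership in ${\mathcal V}_D$ is not inherited by sub-diffeological spaces, and the smoothing theorem in the form available here (Theorem \ref{thm:smoothing}) is stated for mapping spaces $C^\infty(M,N)$ between finite-dimensional manifolds, not for a diffeological pullback such as ${\mathcal P}_g^\infty(M)$. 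Second, the comparison of the diffeological pullback with the topological one after applying $D$ is exactly where $D$, being a left adjoint, fails to cooperate; you flag this as ``the hard part'' and defer it to an analogy with Appendix \ref{sect:App3}, but that appendix compares two \emph{diffeological} loop spaces ($C^\infty(S^1,M)$ versus $L^\infty M$) and does not supply the needed identification of $D$ of a diffeological pullback with a homotopy pullback in $\mathsf{Top}$.

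The paper sidesteps both issues by arguing purely cohomologically: it places $H((\xi')^*)$ in a commutative ladder whose other columns are Eilenberg--Moore maps $EM_1$, $EM_2$, $EM_3$ computing the three cohomologies from $\text{Tor}$ over the (de Rham, respectively $A_{PL}$) cochains of $M\times M$, with entries the cochains of $M$ and of the path space $M^I$. Simple connectivity of $M$ makes $EM_1$ and $EM_3$ isomorphisms (by the proofs of \cite[Theorem 5.5]{K} and \cite[20.6]{Halperin}), the quasi-isomorphism $\zeta$ forces $EM_2$ to be one, and the smoothing theorem is only ever applied to $M$, $M\times M$ and $M^I\simeq M$ --- honest manifolds and mapping spaces, where it is actually available --- to conclude that $\text{Tor}(\xi^*,\xi^*)$ is an isomorphism. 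Commutativity of the ladder then forces $(\xi')^*$ to be a quasi-isomorphism without ever knowing that $\xi'$ is a weak equivalence. To salvage your route you would have to prove that ${\mathcal P}_g^\infty(M)\in{\mathcal V}_D$ and that $D$ sends this particular pullback to a homotopy pullback; neither is established in the paper, and the Eilenberg--Moore detour exists precisely to avoid having to do so.
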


\begin{proof} The result \cite[Corollary 3.5]{K} allows us to obtain a quasi-isomorphism $\zeta$. We consider a commutative diagram 
\[
\xymatrix@C10pt@R18pt{
H^*(A_{DR}(S^D_\bullet({\mathcal P}_g^\infty(M))) & \text{Tor}_{A_{DR}(S^D(M^{\times 2}))}^*(A_{DR}(S^D(M)), A_{DR}(S^D(M^I))) \ar[l]^-{\cong}_-{EM_1}\\
H^*(A_{PL}(S^D_\bullet({\mathcal P}_g^\infty(M))))_\R \ar[u]_{H(\zeta)}^{\cong} & 
\text{Tor}_{A_{PL}(S^D(M^{\times 2}))}^*(A_{PL}(S^D(M)), A_{PL}(S^D(M^I)))_\R \ar[l]_-{EM_2} \ar[u]_{\text{Tor}(\zeta, \zeta)}^{\cong}
\\
H^*(A_{PL}(\text{Sing}_\bullet({\mathcal P}_g(M))))_\R \ar[u]_{H((\xi')^*)} &\text{Tor}_{A_{PL}(M^{\times 2})}^*(A_{PL}(M), A_{PL}(M^I))_\R \ar[l]^-{\cong}_-{EM_3}
\ar[u]_{\text{Tor}(\xi^*, \xi^*)}
}
\]
in which the horizontal maps are induced by the Eilenberg--Moore map; see \cite[20.6]{Halperin}.  
Here, we write $A_{PL} (X)$ for $A_{PL} (\text{Sing}_\bullet(X))$ in the right-hand corner and $( \ )_\R$ denotes the tensor product $( \ )\otimes_\Q\R$. 

By assumption, the manifold $M$ is simply connected. Then, the proofs of \cite[Theorem 5.5]{K} and \cite[20.6]{Halperin} yield that $EM_1$ and $EM_3$ are isomorphisms, respectively. 
Since $\zeta$ is a quasi-isomorphism, it follows that the vertical maps in the upper square are isomorphisms and then so is $EM_2$. 
We see that $M^I$ is smooth homotopy equivalent to $M$. Therefore, Theorem \ref{thm:smoothing} implies that ${\text{Tor}(\xi^*, \xi^*)}$ is an isomorphism. 
It turns out that $(\xi')^*$ is a quasi-isomorphism. 
\end{proof}

\begin{proof}[Proof of Theorem \ref{thm:DR}]
For the translation groupoid 
$$\xymatrix@C15pt@R5pt{[G\times {\mathcal P}_G^\infty (M)  \ar@<-0.5ex>[r]_-{t} \ar@<0.5ex>[r]^-{s}& {\mathcal P}_G^\infty(M)]}$$ in which $s$ and $t$ are defined by the projection  and the action, respectively, we see that $s\circ \widetilde{p} = t\circ \widetilde{p}$. 
Then the map 
$\widetilde{p}^* : H^*_{DR}(L^\infty(M/G)) \to H^*_{DR}({\mathcal P}_G^\infty(M))$ induced by $\widetilde{p}$ factors through 
$\R\Box_{\R[G]^\vee}H^*_{DR}({\mathcal P}_G^\infty(M))$.  We show that the morphism $\widetilde{p}^*$ of algebras in the theorem is an isomorphism. Consider a commutative diagram
\[
\xymatrix@C15pt@R18pt{
H^*(C^0(S^1, M/G); \R) \ar[r]^{(\widetilde{\xi})^*}_{\cong}& H_{DR}^*(C^\infty(S^1, M/G)) \\
H^*(L(M/G); \R) \ar[r]^{(\widetilde{\xi})^*} \ar[u]^{(q^*)^*}_{\cong} \ar[d]_{\widetilde{p}^*}^{\cong} & H_{DR}^*(L^\infty(M/G)) \ar[u]_{(q^*)^*}^{\cong}\ar[d]^{\widetilde{p}^*}\\
\R\Box_{\R[G]^\vee} H^*({\mathcal P}_G(M); \R)  \ar[r]_{H(\widetilde{\xi}_1)} & \R\Box_{\R[G]^\vee} H^*_{DR}({\mathcal P}^\infty_G(M)), 
}
\]
where $\widetilde{\xi}_1$ denotes the composite of quasi-isomorphisms in Lemma \ref{lem:smoothing_P} and $\widetilde{\xi}$ is the composite of the morphism induced by $\xi$ in 
Theorem \ref{thm:smoothing} and the quasi-isomorphism 
$\zeta : A_{PL}(K)\otimes_{\Q}\R \to A_{DR}(K)$ for a simplicial set $K$ in \cite[Corollary 3.5]{K}. Thus, Theorem \ref{thm:smoothing} implies that $(\widetilde{\xi})^*$ in the top row is an isomorphism. We observe that, by Remark \ref{rem:L(M/G)}, the map $\widetilde{p}^*$ on the left-hand side is an isomorphism. 

Lemma \ref{lem:I_and_S1} implies that map $(q^*)^*$ on the right-hand side induced by the smooth map $q : I \to S^1$ in $\mathsf{Diff}$ is an isomorphism. A usual argument on the quotient map $q : I \to S^1$ shows that the projection induces a weak homotopy equivalence $(q^*) : C^0(S^1, M/G) \to L(M/G)$ and hence the left-hand side map 
$(q^*)^*$ is an isomorphism. 
Since the map $H(\widetilde{\xi}_1)$ in the lowest row is also an isomorphism, it follows that 
the right-hand side map $\widetilde{p}^*$  is an isomorphism. We have the result. 
\end{proof}

The following result follows from Theorems \ref{thm:computation_I} and \ref{thm:DR}. 

\begin{thm}\label{thm:deRhamP}  
%\todo{Describe a result for the odd dimensional projective space $L^\infty \R P^{2m+1}$}
One has sequences  
\[
\xymatrix@C5pt@R12pt{
H^*_{DR}(L^\infty \R P^{2m+1}) \ar[r]^-{\widetilde{p}^*}_-\cong  & \R\Box_{\R[G]^\vee}H^*_{DR}({\mathcal P}_G^\infty(S^{2m+1})) \\
& \big(\wedge (\alpha \circ  \mathsf{It}(v_{2m+1}))\otimes \R[\alpha \circ  \mathsf{It}([v_{2m+1})] \big)^{\oplus 2}  \ar[u]_{\cong}  \  \  \      \text{and} \\
H^*_{DR}(L^\infty \R P^{2m}) \ar[r]^-{\widetilde{p}^*}_-\cong  & \R\Box_{\R[G]^\vee}H^*_{DR}({\mathcal P}_G^\infty(S^{2m})) \\
& \big(\wedge(\alpha \circ  \mathsf{It}(v_{2m}[v_{2m}])) \otimes \R[\alpha \circ  \mathsf{It}(1[v_{2m}|v_{2m}])\big)\oplus \R \ar[u]_{\cong}
}
\] 
of isomorphisms of algebras, 
where $v_n$ denotes the volume form on $H^*_{DR}(S^n)$, $\mathsf{It}$ and $\alpha$ are Chen's iterated integral map and the factor map, 
respectively; see Appendices \ref{sect:S-deRham} and \ref{sect:ChenIt}. 
\end{thm}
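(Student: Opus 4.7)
The plan is to combine Theorem~\ref{thm:DR} (to reduce to a computation on ${\mathcal P}_G^\infty(S^n)$) with Theorem~\ref{thm:computation_I} (which identifies the singular cohomology of the components), and then to transport algebra generators from the singular model to the diffeological de Rham model via the quasi-isomorphisms of Lemma~\ref{lem:smoothing_P}, realizing the generators as Chen iterated integrals of the volume form on $S^n$.

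First, I would apply Theorem~\ref{thm:DR} to the principal $G$-bundle $G\to S^n\to \R P^n$ with $G=\Z/2$: since $n\geq 2$, $S^n$ is a simply-connected manifold on which $G$ acts freely and smoothly, so the theorem applies and gives the upper isomorphism $\widetilde{p}^*$ in each of the two displays. Next, the commutative diagram used in the proof of Theorem~\ref{thm:DR} combines with Lemma~\ref{lem:smoothing_P} to give a $G$-equivariant algebra isomorphism $H^*({\mathcal P}_G(S^n);\R)\cong H^*_{DR}({\mathcal P}_G^\infty(S^n))$. Taking $\R\Box_{\R[G]^\vee}(-)$ on both sides and feeding in the explicit answer for $H^*({\mathcal P}_G(S^n);\R)$ provided by Claim~\ref{claim:P} and the proof of Theorem~\ref{thm:computation_I} (with $\Gamma[-]$ replaced by a polynomial algebra since we are in characteristic zero) yields the lower vertical isomorphism in each display, at the level of abstract graded algebras.

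The substantive step is to exhibit explicit preimages of these abstract generators in the Souriau--de Rham complex, using Chen's iterated integral map followed by the factor map $\alpha$ (see Appendix~\ref{sect:S-deRham} and Appendix~\ref{sect:ChenIt}). For each $g\in G$, Chen's map $\mathsf{It}$ sends an element $\omega_0[\omega_1|\cdots|\omega_r]$ of the bar complex on $A_{DR}^*(S^D_\bullet(S^n))$ to a form on ${\mathcal P}_g^\infty(S^n)$ of total degree $\deg\omega_0+\sum(\deg\omega_i-1)$. A degree count then gives $\alpha\circ\mathsf{It}([v_{2m+1}])$ in degree $2m$ and $\alpha\circ\mathsf{It}(v_{2m+1})$ in degree $2m+1$ on each component of ${\mathcal P}_G^\infty(S^{2m+1})$; in the even case $\alpha\circ\mathsf{It}(v_{2m}[v_{2m}])$ and $\alpha\circ\mathsf{It}(1[v_{2m}|v_{2m}])$ sit in degrees $4m-1$ and $4m-2$ on ${\mathcal P}_\tau^\infty(S^{2m})$. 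These degrees match those of the singular generators $y$, $\bar y$, $x\otimes u$ and $w$ in Theorem~\ref{thm:computation_I}.

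To identify these iterated integrals with the generators, I would use the comparison, via the Koszul--Tate--to--bar chain map $\Psi$ of \cite[Lemma 1.5]{K91} recalled in the proof of Claim~\ref{claim:P}, between the generators of $\mathrm{Tor}_{H^*(S^n)^{\otimes 2}}(H^*(S^n)_g,H^*(S^n))$ and bar representatives of the form $[v_n]$, $v_n[v_n]$, $1[v_n|v_n]$. Chen's map realizes these bar elements as forms on the path space ${\mathcal P}_g^\infty(S^n)$ that represent the corresponding EMSS generators of $H^*_{DR}({\mathcal P}_g^\infty(S^n))$; the $G$-invariance required to survive in $\R\Box_{\R[G]^\vee}(-)$ was already checked at the singular level in Claim~\ref{claim:P}, and is transported by naturality under the $G$-action on ${\mathcal P}_G^\infty(S^n)$. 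The main obstacle I foresee is precisely this last compatibility: identifying, at the cochain level, Chen's iterated integral construction with the bar-complex model of the EMSS used in the proof of Claim~\ref{claim:P}, coherently on every twisted component ${\mathcal P}_g^\infty(S^n)$ at once. The fact that $\R P^n$ is not simply connected is exactly what forces us to work upstairs on $S^n$ and to handle all components $g\in G$ simultaneously, rather than to appeal directly to Chen's classical iterated integral theorem for simply-connected loop spaces.
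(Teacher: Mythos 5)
Your overall architecture coincides with the paper's: Theorem~\ref{thm:DR} applied to $G\to S^n\to \R P^n$ gives the upper isomorphism $\widetilde{p}^*$, Lemma~\ref{lem:smoothing_P} transports the singular computation of Theorem~\ref{thm:computation_I} (and Claim~\ref{claim:P}) to the singular de~Rham side, and a degree count places the candidate iterated integrals in the degrees of the abstract generators. Your degree bookkeeping for $\alpha\circ\mathsf{It}([v_{2m+1}])$, $\alpha\circ\mathsf{It}(v_{2m+1})$, $\alpha\circ\mathsf{It}(v_{2m}[v_{2m}])$ and $\alpha\circ\mathsf{It}(1[v_{2m}|v_{2m}])$ is correct.

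The gap is in the final identification, and it is exactly the one you flag yourself. A degree count shows only that the candidate forms sit in the right degrees; it does not show that they are closed, non-exact, or that they hit the algebra generators. Your proposed repair --- tracing the chain map $\Psi$ between the Koszul--Tate and bar resolutions and then matching Chen's construction against the bar-complex model of the EMSS ``at the cochain level'' --- is not what the paper does, and as you describe it it remains an open compatibility problem. The paper closes this with Theorem~\ref{thm:general_main} (quoted from \cite[Theorem 5.2]{K}): after checking that $1\times g:S^n\to S^n\times S^n$ is an induction and that $S^n$ is simply connected with finite-type cohomology, that theorem asserts that the composite $\alpha\circ\mathsf{It}:\Omega(S^n)\otimes_{1\times g}\overline{B}(\widetilde{\Omega}(S^n))\to A_{DR}^*(S^D_\bullet({\mathcal P}_g^\infty(S^n)))$ is itself a quasi-isomorphism from the two-sided bar complex. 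Given that, one only needs to know that $v[v]$ and $1[v|v]$ are non-exact cocycles in the bar complex (the paper cites the cyclic bar computation of \cite[Theorem 2.1]{K96} for this); since the target algebra has a one-dimensional space of indecomposables in each of the relevant degrees, the images must be the generators up to nonzero scalar. So the missing ingredient in your argument is the invocation of Theorem~\ref{thm:general_main} together with the non-exactness of the specific bar cocycles; with those two inputs the EMSS/$\Psi$ comparison you were worried about is never needed.
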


\begin{proof} We prove the result on the isomorphisms in the second sequence. 
By virtue of Theorem \ref{thm:general_main}, we see that the composite 
\[
\xymatrix@C15pt@R12pt{
\Omega(M) \otimes_{1\times g} \overline{B}(\widetilde{\Omega}(M)) \ar[r]^-{\mathsf{It}} & \Omega({\mathcal P}_g^\infty(M)) \ar[r]^-{\alpha} & 
A_{DR}(S^D_\bullet({\mathcal P}_g^\infty(M))) 
}
\]
is a quasi-isomorphism for $M = S^n$; see Proposition \ref{prop:Afactor_map}. It is immediate to show that $1\times g$ is an {\it induction}. 
Then, Theorem \ref{thm:general_main} is applicable to this case. 
Moreover,  it follows from Theorem \ref{thm:DR}, Lemma \ref{lem:smoothing_P} and the computation in Theorem \ref{thm:computation_I} that 
\[
H^*_{DR}(L^\infty \R P^{2m}) \cong H^*(A_{DR}(S^D_\bullet({\mathcal P}_\tau^\infty(S^{2m})))\oplus \R \cong H^*({\mathcal P}_\tau (S^{2m})); \R)\oplus \R 
\]
as algebras. Observe that $\widetilde{p}^*$ gives the first isomorphism. 
The same computation with the cyclic bar complex as in \cite[Theorem 2.1]{K96} allows us to deduce that $v_{2m}[v_{2m}]$ 
and $1[v_{2m}|v_{2m}]$ are non-exact cocycles.  
With the indecomposable elements of the second algebra in Theorem \ref{thm:computation_I},  we see that 
$\deg (x\otimes u) = 4m-1 = \deg v_{2m}[v_{2m}]$ and $\deg w = 4m-2 =\deg 1[v_{2m}|v_{2m}]$. Thus, we have the result. The same argument as above enables us to obtain the isomorphisms in the first sequence.  
\end{proof}

\medskip
\noindent
{\it Acknowledgements.}
The author thanks Takahito Naito,  Shun Wakatsuki and Toshihiro Yamaguchi for many valuable suggestions 
on the first draft of this manuscript. 
He is also grateful to Jean-Claude Thomas and Luc Menichi for fruitful discussions on the computation in Section \ref{sect:example(s)}. The author would like to express his gratitude to Jim Stasheff and the referee for suggestions in revising a version of this manuscript. 
% made during his two weeks stay in Angers University, 2022. 
This work was partially supported by JSPS KAKENHI Grant Numbers JP19H05495 and JP21H00982. 

%\medskip
%The author states that there is no conflict of interest.
%Data sharing is not applicable to this article as no new data were created or analyzed in this study. 

\appendix

\section{The free loop space of a global quotient}\label{sect:App2}
Let $G \to M \stackrel{p}{\to} M/G$ be a principal $G$-bundle with {\it discrete} fibre $G$. Under the same notations as in Section \ref{sect:example(s)}, we show the following 
proposition. 
\begin{prop}\label{prop:An_orbifold_stack} \text{\em (cf. \cite[Proposition 5.9]{BGNX})} The map 
$\overline{p} : EG \times_G{\mathcal P}_G(M) \to L(M/G)$ 
induced naturally by the projection $p : M \to M/G$ is a weak homotopy equivalence. 
\end{prop}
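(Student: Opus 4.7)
The plan is to factor $\overline{p}$ as the composition
\[
EG \times_G \mathcal{P}_G(M) \stackrel{\pi}{\longrightarrow} \mathcal{P}_G(M)/G \stackrel{q}{\longrightarrow} L(M/G),
\]
where $\pi$ is the canonical map from the Borel construction to the orbit space and $q$ is induced by $(\gamma, g) \mapsto p\circ \gamma$, and then to prove that each factor is a weak equivalence. The identity $\overline{p} = q \circ \pi$ is immediate from the definitions.

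First I would verify that the $G$-action on $\mathcal{P}_G(M)$ is free: the equation $h\cdot(\gamma, g) = (\gamma, g)$ implies $h\gamma(0) = \gamma(0)$, and freeness of the $G$-action on $M$ then forces $h = e$. Since $G$ is discrete, the quotient map $\mathcal{P}_G(M) \to \mathcal{P}_G(M)/G$ is therefore a principal $G$-bundle. The $G$-equivariant second-factor projection $EG \times \mathcal{P}_G(M) \to \mathcal{P}_G(M)$ is a homotopy equivalence (as $EG$ is contractible), and upon passage to orbit spaces it yields $\pi$. Comparing the two resulting fibre sequences $\mathcal{P}_G(M) \to EG \times_G \mathcal{P}_G(M) \to BG$ and $\mathcal{P}_G(M) \to \mathcal{P}_G(M)/G \to BG$ by the five lemma applied to their long exact sequences in homotopy shows that $\pi$ is a weak equivalence.

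For $q$ I would invoke unique path lifting for the covering $p$. A loop $\alpha : [0,1] \to M/G$ with $\alpha(0) = \alpha(1)$ lifts uniquely to a path $\gamma : [0,1] \to M$ after choosing any point in $p^{-1}(\alpha(0))$; the endpoint satisfies $\gamma(1) = g\gamma(0)$ for a unique $g \in G$, which gives surjectivity. If $(\gamma_1, g_1)$ and $(\gamma_2, g_2)$ both map to $\alpha$, choose $h \in G$ with $\gamma_2(0) = h\gamma_1(0)$; unique path lifting then forces $\gamma_2 = h\gamma_1$, and evaluating at $t = 1$ yields $g_2 = hg_1h^{-1}$, so $(\gamma_2, g_2) = h\cdot(\gamma_1, g_1)$. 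Hence $q$ descends to a continuous bijection on the quotient.

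The main obstacle I expect is continuity of the inverse of $q$, that is turning the global path-lifting argument above into a continuous construction in the compact-open topology. A Lebesgue-number argument subdivides $[0,1]$ so that each subinterval of a given loop $\alpha$ lands in an evenly covered open subset of $M/G$; lifting coherently on each piece produces a local section of $q$ over a neighbourhood of $\alpha$ in $L(M/G)$, showing that $q$ is open and hence a homeomorphism.
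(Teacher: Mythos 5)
Your argument is correct in outline, but it follows a genuinely different route from the paper. The paper maps the column fibration $\mathcal{P}_G^m(M) \to EG\times_G\mathcal{P}_G(M) \to EG\times_G M$ to the column fibration $\Omega_{[m]}(M/G) \to L(M/G) \to M/G$, observes that $EG\times_G M \to M/G$ is a weak equivalence, and then proves directly that the induced map on fibres $\mathcal{P}_G^m(M) \to \Omega_{[m]}(M/G)$ is a weak equivalence by lifting representatives $S^n \to \Omega_{[m]}(M/G)$ (via their adjoints $\Sigma S^n \to M/G$) and their homotopies through the covering $p$. You instead factor $\overline{p}$ through the honest orbit space $\mathcal{P}_G(M)/G$, identify the Borel construction with that quotient, and show that $q : \mathcal{P}_G(M)/G \to L(M/G)$ is a homeomorphism --- equivalently, that $\widetilde{p} : \mathcal{P}_G(M) \to L(M/G)$ is itself a principal $G$-bundle. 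This is closer in spirit to the proof of \cite[Theorem 2.3]{L-U-X}, from which the paper explicitly departs; your version has the merit of not needing $G$ finite, and it yields a strictly stronger conclusion (a homeomorphism on quotients, and the bundle structure of $\widetilde{p}$) which the paper in effect relies on later when it identifies $H^*(L(M/G);\K)$ with a cotensor product via $\widetilde{p}^*$ in Remark \ref{rem:L(M/G)}. The path-lifting computations (surjectivity, injectivity up to the $G$-action, and the Lebesgue-number construction of continuous local sections of $\widetilde{p}$ in the compact-open topology) are all standard and go through as you sketch them.

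One step needs to be shored up: you deduce that $\mathcal{P}_G(M) \to \mathcal{P}_G(M)/G$ is a principal $G$-bundle from freeness of the action plus discreteness of $G$ alone, and that implication is false in general (an irrational rotation action of $\mathbb{Z}$ on $S^1$ is free but its quotient map is not a covering). What you need is that the action is properly discontinuous, and this does follow here, but from the hypothesis that $M \to M/G$ is a principal $G$-bundle rather than from mere freeness: choose an open $U \ni \gamma(0)$ with $hU\cap U=\emptyset$ for $h\neq e$ and pull it back along the evaluation map $\mathrm{ev}_0 : \mathcal{P}_G(M)\to M$, which is $G$-equivariant. With that one line added, the comparison of the two fibre sequences over $BG$ (or, more directly, of the two coverings with fibre $G$) by the five lemma is legitimate and your proof is complete.
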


Let $G$ be a discrete group acting on a space $M$. 
Let $g$ be an element of $G$. We recall a fibre square of the form 
\[
\xymatrix@C25pt@R15pt{
{\mathcal P}_g(M) \ar[r] \ar[d]_{q_g} & M^{[0, 1]} \ar[d]^-{(ev_0, ev_1)} \\
M \ar[r]_-{\phi_g} & M\times M,
}
\]
where $ev_i$ denotes the evaluation map at $i$ for $i = 0, 1$ and  $\phi_g$ is the map defined by $\phi_g(x) = (x, gx)$ for $x \in M$. 
Thus, for each $m \in M$, we have a fibration 
\[
\xymatrix@C30pt@R15pt{
{\mathcal P}_G^m(M)\ar[r] & {\mathcal P}_G(M) \ar[r]^-{q:=\coprod q_g} &M, 
}
\]
where  ${\mathcal P}_G^m(M) = \coprod_{g\in G}{\mathcal P}_g^m(M)$ and 
$
{\mathcal P}_g^m(M) :=\{ \gamma : [0, 1] \to M  \mid \gamma(0)= m, \gamma(1) =g\gamma(0)=gm \}.
$

Since the projection $p_2 : EG \times M \to M$ is a $G$-equivariant map and a homotopy equivalence, it follows that 
$\widetilde{p_2} :  {\mathcal P}_G^m(EG\times M) \to  {\mathcal P}_G^m(M)$ induced by $p_2$ is weak homotopy equivalent and hence 
so is $\widetilde{p_2} : EG\times_G  {\mathcal P}_G(EG\times M) \to  EG\times_G{\mathcal P}_G(M)$.
%; see the proof of Proposition \ref{prop:An_orbifold_stack} for the notations. 
Moreover, Proposition \ref{prop:An_orbifold_stack} is applicable to the $G$-bundle $G \to EG \times M \to EG\times_G M$. 

\begin{cor}\label{cor:L_Borel} \text{\em (cf. \cite[Theorem 2.3]{L-U-X})} One has weak homotopy equivalences
\[
\xymatrix@C25pt@R14pt{
EG\times_G{\mathcal P}_G(M) &  
EG\times_G  {\mathcal P}_G(EG\times M) \ar[l]_-{\widetilde{p_2}}^-\simeq \ar[r]^-{\overline{p}}_-\simeq& L(EG\times_G M). 
}
\]
\end{cor}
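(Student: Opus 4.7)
The plan is to obtain the two weak equivalences independently: the right-hand arrow $\overline{p}$ by a direct application of Proposition \ref{prop:An_orbifold_stack}, and the left-hand arrow $\widetilde{p_2}$ by transporting a $G$-equivariant homotopy equivalence through the functor $EG \times_G {\mathcal P}_G(-)$.

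First, I would set up the right-hand equivalence. Since $G$ is discrete and acts freely on $EG$, the diagonal $G$-action on $EG \times M$ is free, so the quotient map $EG \times M \to EG \times_G M$ is a principal $G$-bundle with discrete fibre $G$. Proposition \ref{prop:An_orbifold_stack} applied to this bundle yields directly a weak homotopy equivalence
\[
\overline{p} : EG \times_G {\mathcal P}_G(EG \times M) \stackrel{\simeq_w}{\longrightarrow} L(EG \times_G M),
\]
induced by the projection $EG \times M \to EG \times_G M$.

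Second, I would establish the left-hand equivalence. The projection $p_2 : EG \times M \to M$ is $G$-equivariant (with $G$ acting diagonally on the source and tautologically on the target) and is a homotopy equivalence, since $EG$ is contractible. For each $g \in G$, the map $p_2$ therefore induces a map ${\mathcal P}_g(EG \times M) \to {\mathcal P}_g(M)$ on the pullback described by diagram \eqref{eq:PB_diagram}, and by naturality of the mapping space construction and the homotopy equivalence $(EG \times M)^{[0,1]} \simeq M^{[0,1]}$, this map is a weak homotopy equivalence. Taking the coproduct over $g \in G$ gives a $G$-equivariant weak equivalence ${\mathcal P}_G(EG \times M) \to {\mathcal P}_G(M)$. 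Since the functor $EG \times_G (-)$ preserves weak homotopy equivalences between $G$-spaces (both sides being realised as Borel constructions over $BG$), I obtain the desired weak equivalence $\widetilde{p_2} : EG \times_G {\mathcal P}_G(EG \times M) \stackrel{\simeq_w}{\longrightarrow} EG \times_G {\mathcal P}_G(M)$.

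The main technical point to verify carefully will be the $G$-equivariance of the equivalence on path spaces: specifically, that $p_2$ respects the twisted $G$-action $h \cdot (\gamma, g) = ({}_h\gamma, hgh^{-1})$ used in the definition \eqref{eq:P_G} of ${\mathcal P}_G$, and that a $G$-equivariant weak equivalence of $G$-spaces does indeed descend to a weak equivalence of Borel constructions (which follows from the long exact sequence of the Borel fibration and the five-lemma, or from the fact that $EG \times_G (-)$ is a homotopy colimit). Once these points are in place, composing the two equivalences gives the stated diagram.
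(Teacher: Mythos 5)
Your proposal is correct and follows essentially the same route as the paper: the right-hand arrow is obtained by applying Proposition \ref{prop:An_orbifold_stack} to the principal $G$-bundle $G \to EG\times M \to EG\times_G M$, and the left-hand arrow comes from the $G$-equivariant homotopy equivalence $p_2 : EG\times M \to M$ inducing a weak equivalence on the path spaces and hence on Borel constructions. The only cosmetic difference is that the paper verifies the intermediate equivalence on the based path spaces ${\mathcal P}_G^m$ (the fibres), whereas you argue on the total spaces ${\mathcal P}_G$ via the pullback description and homotopy invariance of $EG\times_G(-)$; both justifications are standard and equivalent here.
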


%The proof of the proposition that we give here is different from that for \cite[Theorem 2.3]{L-U-X} in which $G$ is assumed to be finite. 
The proof of Proposition \ref{prop:An_orbifold_stack} we present here differs from that of [32, Theorem 2.3] which assumes $G$ to be finite.

\begin{proof}[Proof of Proposition \ref{prop:An_orbifold_stack}] With the same notations as above, we have a commutative diagram 
\[
\xymatrix@C25pt@R15pt{
{\mathcal P}_G^m(M)\ar[d] \ar[r]^{\overline{p}} &  \Omega_{[m]}(M/G) \ar[d] \\
EG\times_G {\mathcal P}_G(M) \ar[d]_-{1\times_G ev_0} \ar[r]^{\overline{p}} & L(M/G) \ar[d]^{ev_0} \\
EG\times_G M \ar[r]_{\widetilde{\pi}}& M/G 
}
\]
in which two vertical sequences are fibrations and $\widetilde{\pi}$ is induced by the projection 
$\pi : EG \times M \to M$ in the second factor. 
Observe that the result \cite[Proposition 3.2.2]{Nei} yields the left-hand side fibration; see also  \cite[Proposition B.1]{Matsuo}. 
The maps $\pi$ and $\widetilde{\pi}$ give a morphism of fibrations from $G \to EG \times M \to EG\times_G M$ to $G \to M \to M/G$ which is the identity map on the fibres. 
Since $EG$ is contractible, it follows that 
$\widetilde{\pi}$ is a weak homotopy equivalence. Therefore, in order to prove the result, it suffices to show that  the map 
$\overline{p} :   {\mathcal P}_G^m(M) \to  \Omega_{[m]}(M/G) $ is weak homotopy equivalent. 

We consider the map $\overline{p}_* : \pi_n({\mathcal P}_G^m(M), \widetilde{\gamma}) \to \pi_n(\Omega_{[m]}(M/G), \gamma)$ induced by the projection $p : M \to M/G$ 
for $n \geq 1$.  For an element $\beta :  S^n \to \Omega_{[m]}(M/G)$ in $\pi_n(\Omega_{[m]}(M/G), \gamma)$, the adjoint gives rise to a map $ad(\beta) : \Sigma S^n \to M$, where $\Sigma S^n$ denotes the unreduced suspension of $S^n$. 
Let $f_\beta : 
S^n \times I \to M$ be the composite of $ad(\beta)$ and the projection $S^n \times I \to \Sigma S^n$. 
Observe that $f_\beta( * , t ) = \gamma(t)$. 
Since $\Sigma S^n$ is homeomorphic to $S^{n+1}$, it is immediate that $(f_\beta)_* (\pi_1(S^n\times I)) = \{0\} \subset p_*(\pi_1(M))$. Then the lifting theorem for covering spaces allows us to obtain a lift $\widetilde{f_\beta}$ of $f_\beta$. 
We see that $(\overline{p})_* (ad(\widetilde{f_\beta})) = \beta$. Thus, the map $\overline{p}_*$ is surjective.  

Let $H : S^n \times I \to \Omega_{[m]}(M/G)$ be a homotopy from $\overline{p}(\alpha_0)$ to $\overline{p}(\alpha_1)$ 
based at  $\gamma$, where 
$\alpha_0$ and $\alpha_1$ are elements in  $\pi_n({\mathcal P}_G^m(M), \widetilde{\gamma})$. Then, we have a lift 
$K : S^n \times I \times I \to M$ of 
the adjoint $ad(H) : S^n \times I \times I \to \Sigma S^n \times I \to M/G$. It follows from the uniqueness of the lift that 
$ad(K) : S^n \times I \to {\mathcal P}_G^m(M)$  is a homotopy from $\alpha_0$ to $\alpha_1$ based at $\widetilde{\gamma}$. 

The same argument with the lifting theorem as above allows us to show the bijectivity of the map $\overline{p}_* : \pi_0({\mathcal P}_G^m(M)) \to \pi_0(\Omega_{[m]}(M/G))$. This completes the proof. 
%\noindent
%$\bullet$ 
%In order to complete the proof, we apply the lifting theorem for covering spaces to a `loop' and  a `homotopy between loops' 
%on $M/G$.
\end{proof}

\section{Diffeological spaces}\label{sect:App} % In this section, the definitions of a diffeology and a diffeological space are recalled.  
We begin by reviewing the definitions of a diffeology and a diffeological space.
A good reference for the subjects is the book \cite{IZ}. % for general properties of diffeological spaces and important examples of the subjects. 
Additionally, we recall the definitions of Souriau--de Rham complex, the singular de Rham complex for a diffeological space and a morphism of differential graded algebras connecting the two complexes; 
see \cite{K} for the details. 

\begin{defn}%(\cite{So}) 
Let $X$ be a set. A set  $\D$ of functions $U \to X$ for each open subset $U$ in ${\mathbb R}^n$ and each $n \in {\mathbb N}$ 
is a {\it diffeology} of $X$ if the following three conditions hold: 
\begin{enumerate}
\item Every constant map $U \to X$ for all open subset $U \subset {\mathbb R}^n$ is in $\D$;
\item If $U \to X$ is in $\D$, then for any smooth map $V \to U$ from an open subset $V$ of ${\mathbb R}^m$, the composite 
$V \to U \to X$ is also in $\D$; 
\item
If $U = \cup_i U_i$ is an open cover and $U \to X$ is a map such that each restriction $U_i \to X$ is in $\D$, 
then the map $U \to X$ is in $\D$. 
\end{enumerate}
\end{defn}

We call an open subset of $\R^n$ a {\it domain}. 
A {\it diffeological space} $(X, \D)$ consists of a set $X$ and a diffeology $\D$ of $X$. 
An element of a diffeology $\D$ is called a {\it plot} of $X$. 
Let $(X, \D^X)$ and $(Y, \D^Y)$ be diffeological spaces. A map $X \to Y$ is {\it smooth} if for any plot $p \in \D^X$, the composite 
$f\circ p$ is in $\D^Y$. 
All diffeological spaces and smooth maps form a category $\mathsf{Diff}$. It is worthwhile mentioning that the category $\mathsf{Diff}$ is complete, cocomplete and cartesian closed. Moreover, the category of manifolds embeds into $\mathsf{Diff}$; see also \cite[Section 2]{C-S-W}.

Let $\{(X_i, \D_i) \}_{i \in I}$ be a family of diffeological spaces. Then, the product $\Pi_{i\in I}X_i$ has a diffeology $\D$, called the {\it product diffeology}, defined to be the set of all maps $p :  U \to \Pi_{i\in I}X_i$ from a domain such that $\pi_i\circ p$ are plots of $X_i$ for each $i \in I$, where 
$\pi_i :  \Pi_{i\in I}X_i \to X_i$ denotes the canonical projection. 
Moreover, for diffeological space $X$ and $Y$, the set $F:=C^\infty(X, Y)$ of smooth maps from $X$ to $Y$ is endowed with the {\it functional diffeology} $\D_F$ defined by 
$\D_F: =\{p : U \to F \mid U \ \text{is domain and} \ ad(p) : U \times X \to Y \ \text{is smooth} \}$, where $ad(p)$ denotes the adjoint to $p$.

The category $\mathsf{Diff}$ is related to $\mathsf{Top}$ the category of topological spaces with adjoint functors. Let $X$ be a topological space. Then the {\it continuous diffeology} is defined by the family of continuous maps $U \to X$ from domains. This yields a functor 
$C : \mathsf{Top} \to \mathsf{Diff}$. 
For a diffeological space $(M, \D_M)$, we say that a subset $A$ of $M$ is {\it D-open} if for every plot 
$p \in \D_M$, the inverse image $p^{-1}(A)$ is an open subset of the domain of $p$ equipped with the standard topology. The family of D-open subsets of $M$ defines a topology of $M$. Thus, by giving the topology to each diffeological space, we have a functor 
$D: \mathsf{Diff} \to \mathsf{Top}$ which is the left adjoint to $C$; see \cite{S-Y-H, C-S-W} for more details. The topology for a diffeological space $M$ is called the {\it D-topology} of $M$. 

%For a finite-dimensional manifold $M$, we can define a diffeology $\mathcal{D}^M$, which is called the {\it standard diffeology}, to be the set of all smooth maps from domains to $M$. 

For a finite-dimensional manifold $M$, the set of all smooth maps from domains to $M$ define a diffeology $\mathcal{D}^M$, which is called the {\it standard diffeology}.
Thus, a functor 
$\ell : \mathsf{Mfd} \to \mathsf{Diff}$ is defined by $\ell(M) = (M, \mathcal{D}_M)$, where $\mathsf{Mfd}$ is the category consisting of finite-dimensional manifolds and smooth maps.  Observe that $\ell$ is a fully faithful embedding. 
Moreover, we see that the forgetful functor $U$ from $\mathsf{Mfd}$ to 
$\mathsf{Top}$ factors through the category $\mathsf{Diff}$. 
We summarize the categories and functors mentioned above with the diagram
\[
\xymatrix@C55pt@R15pt{
\mathsf{Mfd} \ar[r]_{\ell :  \ \text{fully faithful}}  \ar@/^1.5pc/[rr]^{U :  \ \text{forgetful functor}}_{\circlearrowright}
  & \mathsf{Diff} 
\ar@<0.8ex>[r]^-{D}% \ar[u]_{S_\Box}
& \mathsf{Top}. \ar@<0.8ex>[l]^-{C}_-{\bot}  
}
\]
 
%\bigskip
%\noindent
%$\bullet$ Chen's iterated integral map described in the framework of diffeolopgy and the factor map \\
%$\bullet$ The Smooth lemma due to Kihara

%\medskip
\subsection{The Souriau--de Rham complex, the simplicial de Rham complex and the factor map}\label{sect:S-deRham}
We here recall the de Rham complex $\Omega^*(X)$ of a diffeological space $(X, \D^X)$ in the sense of 
Souriau \cite{So}. 
For an open set $U$ of ${\mathbb R}^n$, let $\D^X(U)$ be the set of plots with $U$ as the domains and 
$$\Lambda^*(U) = \{h : U \longrightarrow \wedge^*(\oplus_{i=1}^{n} {\mathbb R}dx_i ) \mid h \ \text{is smooth}\}$$
the usual de Rham complex of $U$.  
%Let $\mathsf{Open}$ denote the category consisting of open sets of Euclidian spaces and smooth maps between them.  
We can regard $\D^X( \ )$ and $\Lambda^*( \ )$  as functors from $\mathsf{Euc}^{\text{op}}$ to $\mathsf{Sets}$ the category of sets. 
A $p$-{\it form} is a natural transformation from $\D^X( \ )$ to $\Lambda^*( \ )$. Then, the de Rham complex $\Omega^*(X)$ is defined by the 
cochain algebra consisting of $p$-forms for $p\geq 0$; that is, $\Omega^*(X)$ is the direct sum of 
\[
\Omega^p(X) := \Set{
\xymatrix@C35pt@R8pt{
\mathsf{Euc}^{\text{op}} \rtwocell^{\D^X}_{\Lambda^p}{\hspace*{0.2cm}\omega} & 
\mathsf{Sets} }
| \omega \ \text{is a natural transformation}
}
\]
with the cochain algebra structure induced by that of $\Lambda^*(U)$ pointwisely.  In what follows, we may write $\omega_p$ for 
$\omega_U(p)$ for a plot $p : U \to X$. 
%We mention that the interpretation above of the de Rham complex appears in \cite{P} and \cite{I-I}. 
The de Rham complex defined above is a generalization of the usual de Rham complex of a manifold. 

\begin{rem}\label{rem:tautological_map} 
Let $M$ be a manifold and $\wedge^*(M)$ the usual de Rham complex of $M$. 
We recall the {\it tautological map} $\theta : \wedge^*(M) \to \Omega^*(M)$ defined by 
%\[
$
\theta(\omega) = \{ p^*\omega\}_{p \in {{\mathcal D}^{M}}},
$
%\] 
where $\D^M$ denotes the standard diffeology of $M$. 
Then, it follows that $\theta$ is an isomorphism of cochain algebras; see \cite[Section 2]{H-V-C}. 
\end{rem}

Let ${\mathbb A}^{n}:=\{(x_0, ..., x_n) \in {\mathbb R}^{n+1} \mid \sum_{i=0}^n x_i = 1 \}$ 
be the affine space which is diffeomorphic to the Euclidean space ${\mathbb R}^n$. 
%Let  $\Delta^n_{\text{sub}}$ denote 
%the diffeological space, whose underlying set is the standard $n$-simplex $\Delta^n$, 
%equipped with the sub-diffeology of the affine space ${\mathbb A}^{n}$.  
Let $(A_{DR}^*)_\bullet$ be the simplicial cochain algebra
defined by $(A^*_{DR})_n := \wedge^*({\mathbb A}^{n})$ for each $n\geq 0$. %, where $ \Omega^*(\ )$ is the usual de Rham complex functor. 
Then, for a simplicial set $K$, we define the de Rham complex $A^*_{DR}(K)$ by the set of simplicial maps from $K$ to $(A_{DR}^*)_\bullet$ endowed with 
a differential graded algebra structure induced by that of each $(A^*_{DR})_n$; see \cite[Section 2]{K} for more details. 
For a general simplicial cochain algebra $A_\bullet$,  we denote by $A(K)$ the cochain algebra  
\[
\mathsf{Sets^{\Delta^{op}}}(K, A_\bullet):= \Set{
\xymatrix@C35pt@R8pt{
\mathsf{\Delta}^{\text{op}} \rtwocell^{K}_{A_\bullet}{\hspace*{0.2cm}\omega} & 
\mathsf{Sets} }
| \omega \ \text{is a natural transformation}
}
\]
whose cochain algebra structure is induced by that of $A_\bullet$. 

For a diffeological space $X$, let $S^D_\bullet(X)$ be the simplicial set defined by 
%\[
$ 
 S^D_\bullet(X)_{\text{aff}}:= \{ \{ \sigma : {\mathbb A}^n \to X \mid \sigma \ \text{is a $C^\infty$-map} \} \}_{n\geq 0}. 
$
%\]
Thus, we have the {\it singular de Rham complex} 
$A_{DR}^*(S^D_\bul(X)_{\text{aff}})$ for a diffeological space $X$. 
%&\subsection{The factor map}
%Observe that, for a simplicial set $K$, the map $\nu :  C_{PL}^p(K) \to C^p(K)$ defined by $\nu(\gamma)(\sigma) = \gamma(\sigma)(id_{[p]})$ for 
%$\sigma \in K_p$ gives rise to a natural isomorphism $C_{PL}^*(K) \stackrel{\cong}{\to} C^*(K)$ of cochain algebras; see \cite[Lemma 10.11]{F-H-T}.  
%Moreover, 
%Then, we have a cochain algebra of the form $A_{DR}^*(S^D_\bullet(X)_{\text{aff}})$ for a diffeological space $X$. 
This is regarded as 
a diffeological variant of Sullivan's simplicial polynomial form on a topological space.  
In fact, for a space $X$, the polynomial-de Rham complex $A_{PL}^*(X)$ is defined by $A_{PL}^*(X) := A_{PL}(\text{Sing}_\bullet(X))$ with the simplicial differential graded algebra 
$(A_{PL}^*)_\bullet$ of polynomial forms; see \cite{B-G} and \cite[II 10 (a), (b) and (c)]{FHT}.

\begin{rem}\label{rem:aff_vs_st}
By \cite[Lemma 3.1]{Kihara_19}, we have that the inclusion $i : \Delta^n_{st}  \to {\mathbb A}^n$ is smooth; see Remark \ref{rem:3-2} for  the notation. 
Moreover, the consideration at the end of 
\cite[Section 5]{K} yields that the chain map induced by  $i^* : S^D_\bullet(X)_{\text{aff}} \to S^D_\bullet(X)$ is a quasi-isomorphism for every diffeological space $X$; see also \cite[Table 1, page 959]{K}. 
The de Rham theorem holds for the singular de Rham cohomology; see \cite[Theorem 2.4 and Corollary 2.5]{K}. 
Therefore, the results on the singular de Rham cohomology $H^*(A_{DR}^*(S^D_\bul(X)_{\text{aff}}))$ in \cite{K} hold for the cohomology $H^*(A_{DR}^*(S^D_\bul(X)))$. 
\end{rem}

We recall the {\it factor map} 
$\alpha : \Omega^*(X) \to A_{DR}^*(S^D_\bullet(X)_{\text{aff}})$ of cochain algebras defined by \[\alpha(\omega)(\sigma) = \sigma^*(\omega).
\]
%and define 
%$\alpha' : \Omega(X) \to \widetilde{A^*_{DR}}(S^\infty_\bullet(X))$ %similarly. 
We refer the reader to the result \cite[Theorem 2.4]{K} for an important role of the factor map in the de Rham theorem for diffeological spaces. In particular, we have 
\begin{prop}\label{prop:Afactor_map} \text{\em  (\cite[Theorem 2.4]{K})} Suppose that $X$ is a manifold, more generally, a stratifold in the sense of Kreck \cite{Kreck}. 
Then the factor map $\alpha$ for $X$ is a quasi-isomorphism. 
\end{prop}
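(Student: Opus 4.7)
The plan is to combine the classical de Rham theorem on the source side with the singular de Rham theorem \cite[Theorem 2.4]{K} on the target side, and then verify that the factor map is the comparison which realises both with the same isomorphism. First I would use the tautological map $\theta:\wedge^*(M)\xrightarrow{\cong}\Omega^*(M)$ from Remark \ref{rem:tautological_map} to reduce (in the manifold case) to showing that the composite
\[
\alpha\circ\theta:\wedge^*(M)\longrightarrow A_{DR}^*(S^D_\bullet(M)_{\text{aff}}),\qquad \omega\longmapsto\bigl(\sigma\mapsto\sigma^*\omega\bigr)
\]
is a quasi-isomorphism. This is the smooth-singular analogue of the classical map from the de Rham complex to Sullivan's polynomial-form complex, with smooth simplices $\sigma:\mathbb{A}^n\to M$ in place of continuous ones and $C^\infty$-forms in place of polynomial forms.

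Second, I would prove the result by a standard local-to-global argument. Both functors $X\mapsto\Omega^*(X)$ and $X\mapsto A_{DR}^*(S^D_\bullet(X)_{\text{aff}})$ are contravariantly functorial in smooth maps and carry natural Mayer--Vietoris sequences for an open cover $\{U,V\}$: on the Souriau side this is the sheaf property of forms, and on the simplicial side it follows from a smooth barycentric-subdivision argument, as in the standard proof that smooth singular cohomology agrees with singular cohomology. The factor map $\alpha$ is natural and intertwines these Mayer--Vietoris sequences. Taking a good cover of $M$ (existence is classical for smooth manifolds), a spectral-sequence comparison reduces the manifold case to $X$ a convex open subset of $\R^n$.

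Third, on a convex open $X\subset\R^n$ (in particular on a point), both cohomologies are $\R$ concentrated in degree $0$: the Souriau Poincar\'e lemma applies to $\Omega^*(X)$, and smooth homotopy invariance of $A_{DR}^*(S^D_\bullet(-)_{\text{aff}})$ together with the smooth contraction of $X$ to a point gives the acyclicity of the target. On $H^0$ the factor map is the identity $\R\to\R$, so the base case holds, and the Mayer--Vietoris induction from Step~2 yields the result for all manifolds. The stratifold case follows by induction on the depth of the stratification: each open stratum is a manifold to which the result already applies, and the Mayer--Vietoris package extends over the strata using the compatibility of $\Omega^*$ and $A_{DR}^*(S^D_\bullet(-)_{\text{aff}})$ with the gluing maps built into the definition of a stratifold.

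The main obstacle will be Step~2 — verifying that the target complex satisfies a genuine Mayer--Vietoris property at the cochain level. Unlike singular cochains, where small-simplex subdivision is straightforward, here one must work with smooth simplices $\sigma:\mathbb{A}^n\to X$ valued in $C^\infty$-forms on $\mathbb{A}^n$, so the subdivision must be carried out smoothly and the chain homotopy between the identity and subdivision must itself consist of $C^\infty$-forms; this is the technical heart of the argument and is what ultimately distinguishes the smooth-singular setting from the continuous one.
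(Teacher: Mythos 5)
The paper itself offers no proof of this proposition: it is quoted verbatim from \cite[Theorem 2.4]{K}, so your attempt must be measured against the proof given there. Your manifold-case skeleton (reduction via the tautological map $\theta$ of Remark \ref{rem:tautological_map}, Mayer--Vietoris over a good cover, Poincar\'e lemma on convex sets) is the standard shape and is broadly sound, but the step you single out as the technical heart is precisely where the plan breaks in this particular model. An element of $S^D_\bullet(X)_{\text{aff}}$ is a smooth map defined on the \emph{entire} affine plane ${\mathbb A}^n$, not on the compact simplex; barycentric subdivision acts by precomposition with affine automorphisms-onto-${\mathbb A}^n$, so the subdivided simplices have the same (global) image as $\sigma$ and need not land in $U$ or $V$ -- the small-simplices subcomplex argument fails verbatim, and no amount of smoothing the chain homotopy repairs it. Moreover, both the cochain-level Mayer--Vietoris exactness (surjectivity of the restriction $A_{DR}(K)\to A_{DR}(L)$ for $L\subset K$) and homotopy invariance of $A_{DR}(S^D_\bullet(-))$ require that the simplicial cochain algebra $(A_{DR}^*)_\bullet$ is \emph{extendable}, a nontrivial point you nowhere address. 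The efficient repair -- and the route consistent with \cite{K} -- is to avoid form-valued subdivision altogether: use the natural zig-zag of quasi-isomorphisms between $C^*(K;\R)$ and $A_{DR}^*(K)$ (\cite[Proposition 3.4]{K}, already invoked in the proof of Theorem \ref{thm:App}) together with the comparison between the affine simplices and Kihara's compact standard simplices (Remark \ref{rem:aff_vs_st}), thereby transporting Mayer--Vietoris, smallness and homotopy invariance from ordinary simplicial cohomology, where they are classical.

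The stratifold case is hand-waved and, as sketched, would not go through. Strata are not open subsets glued along a Mayer--Vietoris-friendly pattern: the low-dimensional skeleta are closed, and an induction on depth requires Kreck's local retractions and cone-like neighborhoods, plus the smooth partitions of unity that stratifolds admit by definition, to manufacture the two-set covers and the smooth homotopy equivalences between a neighborhood of the skeleton and the skeleton itself; none of this is supplied by ``the gluing maps built into the definition of a stratifold.'' Note also that your first step is unavailable here: the isomorphism $\theta$ of Remark \ref{rem:tautological_map} identifies $\Omega^*(X)$ with the classical de Rham complex only for manifolds, whereas for a stratifold the source of $\alpha$ is the Souriau complex of the underlying diffeology, so the local computation must be carried out for cone neighborhoods directly on $\alpha$, as is done in \cite{K} using the structure theory of \cite{Kreck}.
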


%Observe that $A_{DR}^*(S^D_\bul(X)_{\text{aff}})$ is quasi-isomorphic to the usual de Rham complex 
%$\Omega^*(X)$ if $X$ is a manifold; see \cite[Theorem 2.4 and the comments after the proof of Theorem 5.2]{K}.

\subsection{Chen's iterated integral map in diffeology}\label{sect:ChenIt}
%\todo{Explain a variant of Chen's iterated integral map described in diffeolopgy}

Let $N$ be a diffeological space and $\rho : \R \to I$ a cut-off function with $\rho(0)=0$ and $\rho(1)=1$. 
%in the proof of Lemma \ref{lem:acyclic}. 
Then, we call a $p$-form $u$ on the diffeological space $I\times N$ an $\Omega^p(N)$-{\it valued function on} $I$ if 
for any plot $\psi : U \to N$ of $N$, the $p$-form $u_{\rho \times \psi}$ 
on $\R \times U$ is of type 
\[
\sum a_{i_1\cdots i_p}(t, \xi)d\xi_{i_1}\wedge \cdots \wedge d\xi_{i_p},
\]
where $(\xi_1, ..., \xi_n)$ denotes the coordinates of $U$ we fix. For such an $\Omega^p(N)$-valued function $u$ on $I$, an integration 
$\int_0^1u \ dt \in \Omega^p(N)$ is defined by
\[
(\int_0^1u \ dt)_\psi = \sum (\int_0^1a_{i_1\cdots i_p}(t, \xi) \ dt) d\xi_{i_1}\wedge \cdots \wedge d\xi_{i_p}. 
\]
Each $p$-form $u$ has the form $u = dt\wedge ((\partial /\partial t) \rfloor u) + u''$, where $(\partial /\partial t) \rfloor u$ and $u''$ are an 
$\Omega^{p-1}(N)$-valued function and an $\Omega^{p}(N)$-valued function on $I$, respectively. Let $F : I \times N^I \to N^I$ be the homotopy defined by 
$F(t, \gamma)(s) = \gamma(ts)$. The Poincar\'e operator $\int_F : \Omega(N^I) \to \Omega(N^I)$ associated with the homotopy $F$ is defined by 
$\int_F v = \int_0^1((\partial /\partial t) \rfloor F^*v)dt$. 
Moreover, for forms $\omega_1$, ..., $\omega_r$ on $N$, we define 
the {\it iterated integral} $\int \omega_1\cdots \omega_r$, which is an element in $\Omega^*(N^I)$, by $\int \omega_1 = \int_F\e_1^*\omega_1$ and 
\[
\int \omega_1\cdots \omega_r = \int_F\{J(\int \omega_1\cdots \omega_{r-1}) \wedge \e_1^*\omega_r\},
\]
where $\e_i$ denotes the evaluation map at $i$, $Ju =(-1)^{\deg u}u$ and $\int \omega_1\cdots \omega_r =1$ if $r=0$; see \cite[Definition 1.5.1]{C}. 
We observe that the Poincar\'e operator is of degree $-1$ and then $\int \omega_1\cdots \omega_r$ is of degree $\sum_{1\leq i \leq r}(\deg \omega_i -1)$. 

With a decomposition of the form $\widetilde{\Omega}^1(N)\oplus d\Omega^0(N)$, 
we have a cochain subalgebra $\widetilde{\Omega}(N)$ of $\Omega(N)$ which satisfies the condition that $\widetilde{\Omega}^p(N) = \Omega(N)$ for $p> 1$ and $\widetilde{\Omega}^0(N)=\R$. The cochain algebra $\widetilde{\Omega}(N)$ gives rise to the normalized bar complex 
$B(\Omega(N), \widetilde{\Omega}(N), \Omega(N))$; see \cite[\S 4.1]{C}. Consider the pullback diagram 
\begin{eqnarray}\label{eq:diagram_It}
\xymatrix@C25pt@R15pt{
E_f \ar[r]^-{\widetilde{f}} \ar[d]_{p_f} & N^I \ar[d]^{\e_0\times \e_1}\\ 
M \ar[r]_-{f} & N\times N
}
\end{eqnarray}
of $\e_0\times \e_1 : N^I \to N\times N$ along a smooth map $f : M \to N\times N$. 
We write $\overline{B}(\widetilde{\Omega}(N))$ for 
$B(\R, \widetilde{\Omega}(N), \R)$. 
Then we have a map  
\[
\mathsf{It} : \Omega(M)\otimes_{\Omega(N)\otimes\Omega(N)}B(\Omega(N), \widetilde{\Omega}(N), \Omega(N))\cong 
\Omega(M) \otimes_f \overline{B}(\widetilde{\Omega}(N)) \to \Omega(E_f)
\]
defined by 
$\mathsf{It} (v\otimes [\omega_1| \cdots | \omega_r])=
p_f^*v\wedge \widetilde{f}^*\int \omega_1\cdots \omega_r$. 
Observe that the domain of $\mathsf{It}$ gives rise to the differential on 
$\Omega(M) \otimes_f \overline{B}(\widetilde{\Omega}(N))$ by definition. Since $\rho(0)=0$ and $\rho(1)=1$ for the cut-off function $\rho$ which we use when defining the $\Omega^p(N)$-valued function on $I$, it follows that the result 
\cite[Lemma 1.4.1]{C} remains valid.  
Then the formula of iterated integrals with respect to the differential in \cite[Proposition 1.5.2]{C} implies that 
$\mathsf{It}$ is a well-defined morphism of differential graded $\Omega^*(M)$-modules.

%\begin{rem}
%The cut-off function $\rho$ does not satisfy the formula $\rho(s)\rho(t) = \rho(st)$ for $s, t \in \R$ in general. Then we do not have the same assertion 
%as that of  \cite[Lemma 1.5.1]{C} which allows us to deduce a constructive definition of the iterated integral as in (2.2); see \cite[page 840]{C}.
%\end{rem}

The following theorem enables us to compute the singular de Rham cohomology of a pullback diffeological space with a bar complex via Chen's iterated integral map and the factor map mentioned above; see Remark \ref{rem:aff_vs_st}. 

\begin{thm}\label{thm:general_main} \text{\em (\cite[Theorem 5.2]{K})} Suppose that, in the pullback diagram \text{\em (\ref{eq:diagram_It})}, the diffeological space $N$ is 
simply connected and $f$ is an induction; that is, $p$ is a plot of $M$ if and only if $f\circ p$ is a plot of $N\times N$. 
Assume further that the factor maps for $N$ and $M$ % $\alpha : \Omega^*(M) \to A_{DR}(S^D_\bullet(M))$ 
are quasi-isomorphisms and each vector space $H^i(S^D_\bullet(N))$ is of finite dimension. Then the composite 
$\alpha \circ  \mathsf{It} : \Omega^*(M)\otimes_f \overline{B}(\widetilde{\Omega}(N)) \to 
\Omega(E_f) \to A^*_{DR}(S^D_\bullet(E_f))$ is a quasi-isomorphism of 
$\Omega^*(M)$-modules.  
\end{thm}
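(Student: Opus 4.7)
The plan is to chain Theorem \ref{thm:DR} with Theorem \ref{thm:general_main}, then transport the mod-$p$ computation underlying Theorem \ref{thm:computation_I} to the bar complex over $\R$ to make the generators explicit. First, since $n \geq 2$ the sphere $S^n$ is simply connected and the antipodal $G=\Z/2$-action is free and smooth, so Theorem \ref{thm:DR} applies to the principal bundle $G \to S^n \to \R P^n$ and gives the top isomorphism $\widetilde{p}^* : H^*_{DR}(L^\infty \R P^n) \xrightarrow{\cong} \R\Box_{\R[G]^\vee} H^*_{DR}({\mathcal P}^\infty_G(S^n))$ in each of the two sequences. It remains to identify the right-hand side with the explicit iterated-integral algebras.

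Next, for each $g \in G$, the diffeological space ${\mathcal P}_g^\infty(S^n)$ is the pullback of $(\e_0, \e_1) : (S^n)^I \to S^n \times S^n$ along $1 \times g : S^n \to S^n \times S^n$. The map $1 \times g$ is an induction since $g$ is a diffeomorphism, while the remaining hypotheses of Theorem \ref{thm:general_main}—simple connectivity of $S^n$, the quasi-isomorphism of the factor map for $S^n$ from Proposition \ref{prop:Afactor_map}, and the finite-type condition on $H^*_{DR}(S^n)$—are immediate. Hence
\[
\alpha \circ \mathsf{It} : \Omega^*(S^n) \otimes_{1 \times g} \overline{B}(\widetilde{\Omega}(S^n)) \xrightarrow{\ \simeq\ } A_{DR}(S^D_\bullet({\mathcal P}_g^\infty(S^n)))
\]
is a quasi-isomorphism, so every de Rham class on ${\mathcal P}_g^\infty(S^n)$ is represented by an iterated integral of the volume form $v_n$.

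Then I compute the cohomology of the twisted bar complex by running in $\R$-coefficients the exact EMSS argument of Claim \ref{claim:P}, based on Smith's Koszul--Tate resolution $\mathcal{F}$ of $H^*(S^n;\R)$ together with the comparison chain map $\Psi : \mathbb{B}^{*,*} \to \mathcal{F}$ of \cite[Lemma 1.5]{K91}. For odd $n = 2m+1$, the action of $g^*$ on $H^*(S^{2m+1})$ is trivial, so $H^*_{DR}({\mathcal P}_g^\infty(S^{2m+1})) \cong \wedge(y) \otimes \Gamma[\bar y]$ for each $g$, with bar representatives $y = v_{2m+1} \otimes 1$ and $\bar y = 1 \otimes [v_{2m+1}]$, and the $G$-action on cohomology is trivial. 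For even $n = 2m$, the identity component ${\mathcal P}_0^\infty(S^{2m}) \simeq L^\infty S^{2m}$ satisfies $\tau^* = -1$ on all positive-degree classes (Claim \ref{claim:P}(i)), so after taking $G$-invariants only the ground field $\R$ survives; on the twisted component, $H^*_{DR}({\mathcal P}_\tau^\infty(S^{2m})) \cong \wedge(x \otimes u) \otimes \Gamma[w]$ with $\tau^*$ acting trivially, and the same chain map $\Psi$ identifies bar representatives $v_{2m} \otimes [v_{2m}]$ and $1 \otimes [v_{2m} \,|\, v_{2m}]$ with the Koszul--Tate generators $x \otimes u$ and $\gamma_1(w)$, their non-exactness following from the cyclic bar computation in \cite[Theorem 2.1]{K96}.

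Finally, taking the cotensor product $\R \Box_{\R[G]^\vee}$ now collects $G$-invariants component by component: in the odd case this simply doubles the algebra (one summand per $g \in G$), producing the stated $(\wedge(y) \otimes \R[\bar y])^{\oplus 2}$; in the even case it annihilates all positive-degree classes on ${\mathcal P}_0^\infty$, leaving only $\R$, while preserving the full invariant algebra on ${\mathcal P}_\tau^\infty$, producing $\wedge(x \otimes u) \otimes \R[w] \oplus \R$. Applying $\alpha \circ \mathsf{It}$ to the explicit bar representatives transports them to the iterated integrals named in the statement, matching degrees ($2m+1$ and $2m$ in the odd case; $4m-1$ and $4m-2$ in the even case) exactly with the indecomposables of Theorem \ref{thm:computation_I}. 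The only step that requires care—and the real obstacle—is the identification of $v_{2m} \otimes [v_{2m}]$ and $1 \otimes [v_{2m} \,|\, v_{2m}]$ as non-exact cocycles in the \emph{twisted} bar complex (where $\Lambda_\tau$ replaces $\Lambda$ as right module); this is the $\R$-coefficient transcription of Claim \ref{claim:P}(ii) via $\Psi$, and beyond this bookkeeping the argument is forced.
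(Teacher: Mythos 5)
Your proposal does not prove the statement under review; it proves (essentially) Theorem \ref{thm:deRhamP}. The statement here is Theorem \ref{thm:general_main} itself --- that $\alpha \circ \mathsf{It}$ is a quasi-isomorphism for a general pullback $E_f$ under the stated hypotheses --- and your second paragraph simply \emph{invokes} that theorem (``the remaining hypotheses of Theorem \ref{thm:general_main} \dots are immediate. Hence $\alpha\circ\mathsf{It}$ \dots is a quasi-isomorphism''), which makes the argument circular as a proof of it. Everything that follows (the $\R$-coefficient transcription of Claim \ref{claim:P} via the chain map $\Psi$, the cotensor-product bookkeeping over $\R[G]^\vee$, the matching of bar representatives $v_{2m}\otimes[v_{2m}]$ and $1\otimes[v_{2m}\,|\,v_{2m}]$ with the indecomposables of Theorem \ref{thm:computation_I}) concerns the computation of $H^*_{DR}(L^\infty\R P^n)$ and has no bearing on why the iterated integral map is a quasi-isomorphism in general; indeed you identify as ``the real obstacle'' a step that belongs to Theorem \ref{thm:deRhamP}, not to this theorem.

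For calibration: in this paper the theorem carries no proof at all --- it is quoted from \cite[Theorem 5.2]{K}. A self-contained proof would have to run an Eilenberg--Moore comparison: the twisted bar complex $\Omega^*(M)\otimes_f\overline{B}(\widetilde{\Omega}(N))$ computes a differential torsion product of the form $\text{Tor}_{\Omega(N\times N)}(\Omega(M),\Omega(N^I))$; the hypothesis that $f$ is an induction is what allows one to relate $\Omega(E_f)$, equivalently $A^*_{DR}(S^D_\bullet(E_f))$, to this pullback datum; simple connectivity of $N$ together with the finite-type condition on $H^*(S^D_\bullet(N))$ is what secures convergence of the relevant Eilenberg--Moore spectral sequences (compare the diagram in the proof of Lemma \ref{lem:smoothing_P}, where the isomorphism $EM_1$ is obtained from the proof of \cite[Theorem 5.5]{K}); and the factor-map hypotheses for both $M$ and $N$ are what transport the comparison from the Souriau complexes $\Omega^*(-)$ to the singular de Rham complexes $A^*_{DR}(S^D_\bullet(-))$. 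None of these ingredients --- in particular, an accounting of where each hypothesis is actually used --- appears in your text, so as a proof of Theorem \ref{thm:general_main} it is not salvageable by local repairs; it is an argument for a different (downstream) result.
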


\section{$C^\infty(S^1, M)$ versus $L^\infty M$}\label{sect:App3}
Let $M$ be a diffeological space. We have two {\it free loop spaces} of $M$. One of them is the diffeological space $C^{\infty}(S^1, M)$ of smooth maps from the circle $S^1$ to $M$ with the functional diffeology. Another one is the diffeological space  $L^\infty M$ which fits in the pullback diagram
\begin{eqnarray}\label{eq:PB_diagram_1}
\xymatrix@C25pt@R15pt{
L^\infty M \ar[r] \ar[d] & M^{[0, 1]} \ar[d]^{\e_0\times \e_1} \\
M \ar[r]_-{\Delta} & M\times M
}
\end{eqnarray}
in the category $\mathsf{Diff}$, 
where $I :=[0, 1]$ is the diffeological subspace of $\R$ the Euclidean space 
% $\e_i$ is the evaluation map at $i$ for $i = 0, 1$ 
and $\Delta$ denotes the diagonal map. 
We observe that $L^\infty M$ is diffeomorphic to the diffeological subspace 
of $M^I$ consisting of smooth maps $\gamma$ with $ \gamma(0) = \gamma(1)$.
%$\{\gamma : I \to M \mid \text{$\gamma$ is smooth and} \ \gamma(0) = \gamma(1)\}$ of $M^I$. 

Let $q : \R \to S^1$ be the smooth map defined by $q(t) = e^{2\pi \sqrt{-1}t}$. Then, the restriction $q : I \to S^1$ is smooth. 
In the category $\mathsf{Top}$, the continuous map $q : I \to S^1$ is regarded as a quotient map. The fact enables us to conclude that 
$q$ induces a weak homotopy equivalence $q^* : C^0(S^1, M) \stackrel{\simeq_w}{\longrightarrow} LM$ in $\mathsf{Top}$. 
We obtain a diffeological version of the equivalence.  

In order to define the weak homotopy equivalence between diffeological spaces, 
we first recall the smooth homotopy groups of a pointed diffeological space. 

We define an equivalence relation on a diffeological space $Z$ by $z\simeq w$ if there exists a smooth path $l : I \to Z$ such that $l(0) = z$ and 
$l(1) = w$. %, where $I := [0, 1]$ denotes the diffeological subspace of $\R$. 
Let $S^n$ be the $n$-sphere endowed with sub-diffeology of the manifold $\R^{n+1}$. We use the north pole $\ast$ as a base point of $S^n$. 
For a pointed diffeological space $(X, x_0)$, let $C^\infty((S^n, \ast), (X, x_0))$ be the diffeological subspace of 
the mapping space $C^\infty(S^n, X)$ consisting of smooth maps that preserve base points.  Then, given a positive integer $n$, the {\it $n$th smooth homotopy group} 
$\pi_n^D(X, x_0)$ is defined by the set $C^\infty((S^n, x_0), (X. x_0))/\!\simeq$. Moreover, we define $\pi_0^D(X)$ by $X/\!\simeq$. 
We observe that, while the original smooth homotopy group $\pi_n(X, x_0)$ of a pointed diffeological space $(X, x_0)$ due to Iglesias-Zemmour \cite{IZ} is 
defined by using an iterated loop space of $X$, 
there is a natural bijection between the smooth homotopy set $\pi_n^D(X, x_0)$ and  $\pi_n(X, x_0)$; see \cite[Theorem 3.2]{C-W} for more details. 
 
By definition, we call a smooth map $f : X\to Y$ in $\mathsf{Diff}$ a {\it weak homotopy equivalence} if the induced maps 
$f_* : \pi_0^D(X) \to \pi_0^D(Y)$ and  $f_* : \pi_n^D(X, x_0) \to \pi_n^D(Y, f(x_0))$  for each $n$ and $x_0 \in X$ are bijective. 

\begin{lem}\label{lem:I_and_S1} The smooth map $q : I \to S^1$ mentioned above gives rise to a weak homotopy equivalence 
$q^* : C^\infty(S^1, M) \stackrel{\simeq_w}{\longrightarrow} L^\infty M$. 
\end{lem}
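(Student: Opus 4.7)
The plan is to exhibit $q^*$ as a smooth homotopy equivalence in $\mathsf{Diff}$, from which the weak homotopy equivalence will follow at once. The key ingredient will be a smooth damping reparametrisation $\sigma:I\to I$ with $\sigma(0)=0$, $\sigma(1)=1$ and $\sigma^{(k)}(0)=\sigma^{(k)}(1)=0$ for every $k\geq 1$; this is a standard bump-function construction. The relation $\sigma(t+1)=\sigma(t)+1$ will then extend $\sigma$ to a smooth $\widetilde\sigma:\R\to\R$, and $q\circ\widetilde\sigma$ will be $1$-periodic, descending to a smooth $\bar\sigma:S^1\to S^1$ with $\bar\sigma\circ q=q\circ\sigma$. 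I will also use the linear homotopy $\sigma_s(t):=(1-s)t+s\sigma(t)$, which preserves the endpoints and still satisfies $\sigma_s(t+1)=\sigma_s(t)+1$, so that it descends to a smooth family $\bar\sigma_s:S^1\to S^1$ from $\mathrm{id}$ to $\bar\sigma$.

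Next, I will build a candidate inverse $r:L^\infty M\to C^\infty(S^1,M)$. For $\gamma\in L^\infty M$, the composite $\gamma\circ\sigma:I\to M$ satisfies $(\gamma\circ\sigma)(0)=(\gamma\circ\sigma)(1)$ and, by Fa\`a di Bruno, $(\gamma\circ\sigma)^{(k)}(0)=(\gamma\circ\sigma)^{(k)}(1)=0$ for $k\geq 1$. The $1$-periodic extension will therefore be smooth on $\R$ and will descend along $q$ to a unique loop $r(\gamma):S^1\to M$ characterised by $r(\gamma)\circ q|_I=\gamma\circ\sigma$. Smoothness of $r(\gamma)$ in $\mathsf{Diff}$ will follow from the fact that every plot of $S^1$ admits local smooth lifts along $q:\R\to S^1$. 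To see that $r$ itself is smooth, I will use cartesian closedness of $\mathsf{Diff}$: it suffices to check, for any plot $\pi:U\to L^\infty M$, that the adjoint map $(u,z)\mapsto r(\pi(u))(z)$ is smooth on $U\times S^1$; locally in $z=q(t)$ this map reduces to $(u,t)\mapsto \pi(u)(\sigma(t))$, which is the composite of the smooth adjoint of $\pi$ with $\sigma$, and the local descriptions agree by construction.

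Finally, the homotopies $H(s,\gamma)(t):=\gamma(\sigma_s(t))$ on $L^\infty M$ and $K(s,f)(z):=f(\bar\sigma_s(z))$ on $C^\infty(S^1,M)$ will be smooth by cartesian closedness and will realise $\mathrm{id}\simeq q^*\circ r$ and $\mathrm{id}\simeq r\circ q^*$ respectively; the endpoints of the homotopies match because $q^*\circ r(\gamma)=r(\gamma)\circ q|_I=\gamma\circ\sigma$ and $r\circ q^*(f)=f\circ\bar\sigma$. The main obstacle I expect is the smoothness of $r$ --- that is, the claim that the periodic extension of $\gamma\circ\sigma$ assembles into a smooth map with respect to the functional diffeology on $C^\infty(S^1,M)$. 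Once this is secured via local lifting of $q$ together with the exponential law in $\mathsf{Diff}$, the remaining verifications are routine manipulations.
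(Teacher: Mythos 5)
Your overall strategy --- upgrading $q^*$ to a smooth homotopy equivalence by means of a reparametrisation $\sigma$ and its descent $\bar\sigma$ to $S^1$ --- is attractive and, once repaired, proves more than the paper does (the paper only verifies bijectivity of $(q^*)_*$ on $\pi_0$ and on $\pi_n$ at each basepoint by hand, via adjoints, charts of $S^1$ and cut-off functions). But there is a genuine gap at the central step, namely the well-definedness of $r$. You argue that $\gamma\circ\sigma$ has vanishing derivatives of all orders at $0$ and $1$ ``by Fa\`a di Bruno'' and conclude that its $1$-periodic extension is smooth. That argument presupposes that $\gamma$ can be differentiated, i.e.\ that $M$ is a manifold; in the lemma $M$ is an arbitrary diffeological space, and a smooth map $\gamma: I\to M$ has no derivatives. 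What you actually need is that the periodic extension $\Gamma:\R\to M$ is a plot of $M$, and near an integer $n$ the map $\Gamma$ is \emph{not} of the form $\gamma\circ h$ for any single smooth $h$ into $I$ (the would-be $h$ jumps from $1$ to $0$ at $n$), so the smoothness of $\gamma$ gives you nothing there. The claim is in fact false as stated: take $M=I/(0\sim 1)$ with the quotient diffeology and $\gamma$ the quotient map. If $\sigma$ is infinitely flat at the endpoints but not locally constant, then $\Gamma$ admits no continuous local lift through $I\to M$ near the integers and is not locally constant there, hence is not a plot, and $r(\gamma)$ does not exist.

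The repair is small: choose $\sigma$ to be \emph{locally constant} near the endpoints ($\sigma\equiv 0$ on $[0,\e)$ and $\sigma\equiv 1$ on $(1-\e,1]$) rather than merely infinitely flat. Then $\Gamma$ is constant on a neighbourhood of each integer (using $\gamma(0)=\gamma(1)$) and equals $\gamma$ composed with a smooth map into $I$ on each open interval $(n,n+1)$, so it is a plot by the locality axiom of a diffeology; the descent to $S^1$ then works because every plot of $S^1$ lifts locally through $q:\R\to S^1$. All derivatives of such a $\sigma$ still vanish at the endpoints, so $\widetilde\sigma$, $\bar\sigma$ and the linear homotopy $\sigma_s$ retain every property you use, and the rest of your argument (smoothness of $r$ via cartesian closedness, the homotopies $H$ and $K$) goes through. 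This is essentially the device the paper itself employs: its cut-off functions $\rho$, $\widetilde\rho$ are constant on neighbourhoods of the gluing points for exactly this reason.
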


\begin{proof}
We prove that the maps $(q^*)_* : \pi_0(C^\infty(S^1, M)) \to \pi_0(L^\infty M)$ and $(q^*)_* : \pi_n(C^\infty(S^1, M), \gamma_0) \to \pi_n(L^\infty M, \gamma_0\circ q)$ induced by $q$ are bijective for $n \geq 1$ and each smooth loop $\gamma_0 : S^1 \to M$. To this end, we first show that the adjoint 
\begin{eqnarray*}
(1\times q)^* : \pi_0(\{\eta : N\times S^1\to M \mid \eta |_{\ast \times S^1}= \gamma_0\}) &\\
& \hspace{-5cm} \longrightarrow \pi_0(\{ \eta' : N \times I \to M \mid  \eta' |_{\ast \times I}= \gamma_0 \circ q,  
\eta' |_{N\times \{0\}} =   \eta' |_{N\times \{1\}} \})
\end{eqnarray*}
is bijective, where $N =S^n$ for $n \geq 1$. 
In what follows, we use the same notation for a homotopy class and its representative.   

We consider a function $\widetilde{\rho} : (-1-2\e,1+2\e) \to I$ with $\widetilde{\rho}(t) = 0$ for $t \in (-1-2\e, -1+2\e)\cup [0, 2\e)$ and 
$\widetilde{\rho}(t) = 1$ for $t \in(-2\e, 0)\cup (1-2\e, 1+2\e)$ for a sufficiently small positive number $\e$.  Let $(U, \varphi_U)$ and $(V, \varphi_V)$ be 
local coordinates of $S^1$ which satisfy the condition that $\varphi_U^{-1} = q$, $U':=\varphi_U(U)= (-\frac{1}{4}-\e, \frac{1}{4} + \e)$, 
$\varphi_V^{-1} = q$ and $V':=\varphi_V(V)= (\frac{1}{4}-\e, \frac{3}{4} + \e)$. 

We define a smooth path $\widetilde{\gamma_0} : S^1 \to M$ by 
$\widetilde{\gamma_0}|_U := \gamma_0\circ q\circ \widetilde{\rho} \circ \varphi_U$ and $\widetilde{\gamma_0}|_V := \gamma_0\circ q\circ \widetilde{\rho} \circ\varphi_V$. 
By using the map $\widetilde{\rho}$, we have a smooth map $\overline{\rho} : (-1-2\e,1+2\e) \to (-1-2\e,1+2\e)$ defined by 
 $\overline{\rho} (t)= \widetilde{\rho} (t) -1$ for $t \in (-1-2\e, 0)$ and $\overline{\rho} (t)= \widetilde{\rho} (t)$ for $t \in [0, 1+2\e)$. 
Since the map $\overline{\rho}$ is smooth homotopic to the identity map on $(-1-2\e,1+2\e)$ and $q \circ \widetilde{\rho} = q \circ \overline{\rho}$, 
it follows that $\gamma_0$ is smooth homotopic to $\widetilde{\gamma_0}$. Therefore, in order to prove the bijectivity of 
$(1\times q)_*$,  it suffices to show the bijectivity for $\widetilde{\gamma_0}$ instead of $\gamma_0$. 
In what follows, we write $\gamma_0$ for $\widetilde{\gamma_0}$. Let $A$ and $B$ be the domain and codomain of the map $(1\times q)_*$, respectively. 

We show the surjectivity of $(1\times q)_*$. 
Let $\eta'$ be an element in $B$. We consider a function $\rho : (-1-\e,1+\e) \to I$ with $\rho(t) = 0$ for $t \in [-1-\e, -1+\e)\cup [0, \e)$, 
$\rho(t) = 1$ for $t \in (-\e, 0)\cup [1-\e, 1+\e]$, $\rho(t) = t$ for $t \in [2\e, 1-2\e]$ and  $\rho(t) = t+1$ for $t \in [-1+2\e, -2\e]$. Observe that $\rho$ is smooth 
except for the point $0$. We define map 
$\eta : N \times S^1 \to M$ by $\eta|_{N\times U} = \eta'\circ (1\times \rho)\circ (1\times \varphi_U)$ and 
$\eta|_{N\times V} = \eta'\circ (1\times \rho)\circ (1\times \varphi_V)$. Since $\eta$ is constant in a neighborhood of zero, it follows that the map is a well-defined smooth map. Moreover, we see that $\eta |_{\ast \times U} = \eta' \circ (1\times \rho)\circ (1\times \varphi_U) = \gamma_0 \circ q \circ \rho \circ \varphi_U = \gamma_0 |_{U}$. The last equality follows from the fact that $\widetilde{\rho}\circ\rho = \widetilde{\rho}$ on $U'$. The same argument as above yields that $\eta |_{\ast \times V} =  \gamma_0 |_{V}$. Thus, we have $\eta |_{\ast \times S^1}= \gamma_0$. 

Moreover, we see that $\eta\circ (1\times q) = \eta'\circ (1\times \rho)$. Extending $\rho|_{[0,1]}$, we define a smooth map 
$\rho' : \R \to \R$ so that $\rho'|_{[1 ,\infty)}=1$ and $\rho'|_{(-\infty , 0]}=0$. Define a smooth $\widehat{\rho} : \R\times \R \to \R$ by 
$\widehat{\rho}(t, s) = (1-s)\rho'(t) + st$. Then the restriction $\widehat{\rho} : I\times I \to I$ gives rise to a smooth homotopy between 
$\rho$ and the identity on $I$. It follows that $(1\times q)^*(\eta) = \eta'\circ (1\times \rho) \sim \eta'$ with the smooth homotopy 
$\eta'\circ (1\times \widehat{\rho})$. We conclude that the map $(1\times q)^*$ is surjective. 

Let $\eta_0$ and $\eta_1$ be elements in $A$ with $(1\times q)^*(\eta_0) = (1\times q)^*(\eta_1)$. Then, there exists a smooth path from $\eta_0\circ (1\times q)$ to $\eta_1\circ (1\times q)$. Let $H : (N\times I)\times I \to M$ be the smooth homotopy which is the adjoint to the path. We define a map
$\widetilde{H} : (N\times S^1)\times I \to M$ by 
$\widetilde{H}|_{N\times W\times I} = H \circ (1\times \rho\times 1)\circ (1\times \varphi_W\times 1)$ for $W = U$ and $V$, respectively. We see that 
$\widetilde{H}$ is a well-defined smooth map that satisfies the condition that $\widetilde{H}(\ast, t,s )= \gamma_0(t)$. We define a smooth map 
$\rho'' : (-1-\e,1+\e) \to \R$ by $\rho'' |_{(-1-\e, 0]} = \rho |_{(-1-\e, 0]}-1$ and  $\rho'' |_{[0, 1+\e)} = \rho |_{[0, 1+\e)}$. Then, it follows that 
$q \circ \rho = q \circ \rho''$. Moreover, we define a smooth homotopy  $\rho_s : (-1-\e,1+\e) \times I \to I$ 
by $\rho_s (t) = (1-s)\rho''(t) + st$. Then the map  $\rho_s$ gives rise to smooth homotopies $\widetilde{H}_i \sim \eta_i$ for $i = 0$ and $1$. 
In fact, for example, we see that 
\begin{eqnarray*}
\widetilde{H}_0|_{N\times U} \!\!\!&=& \!\!\! H_0\circ (1\times \rho)\circ(1\times \varphi_U) =
\eta_0\circ (1\times q)\circ(1\times \rho)\circ(1\times \varphi_U) \\
&=&\!\!\! \eta_0\circ (1\times \pi)\circ(1\times \rho)\circ(1\times \varphi_U) = 
\eta_0\circ (1\times q)\circ(1\times \rho_0)\circ(1\times \varphi_U)  \\
&\sim&
\!\!\! \eta_0\circ (1\times q)\circ(1\times \rho_1)\circ(1\times \varphi_U) 
 =\eta_0|_{N\times U}, 
\end{eqnarray*}
where $\pi : \R \to S^1$ denotes the natural smooth extension of $q$.   
It turns out that $\eta_1 = \eta_0$ in $A$. We observe that the homotopy induced by $\rho_s$ fixes the path $\gamma_0$. 

The same argument as above enables us to prove 
that $q_* : \pi_0(C^\infty(S^1, M)) \to \pi_0(L^\infty M)$ is a bijection. 
We have the result.  
\end{proof}

\end{document}